\newcommand{\Hmm}[1]{\leavevmode{\marginpar{\tiny%
$\hbox to 0mm{\hspace*{-0.5mm}$\leftarrow$\hss}%
\vcenter{\vrule depth 0.1mm height 0.1mm width \the\marginparwidth}%
\hbox to
0mm{\hss$\rightarrow$\hspace*{-0.5mm}}$\\\relax\raggedright #1}}}
\newtheorem{theorem}{Theorem}
\newtheorem{thm}{Theorem}[section]
\newtheorem{lem}[thm]{Lemma}
\newtheorem{lemma}[thm]{Lemma}
\newtheorem{pro}[thm]{Proposition}
\theoremstyle{definition}
\newtheorem*{defi}{Definition}
\newtheorem{eg}[thm]{Example}
\newtheorem*{rem}{Remark}
\numberwithin{equation}{section}
\newcommand{\Z}{{\mathbb Z}}
\newcommand{\R}{{\mathbb R}}
\newcommand{\N}{{\mathbb N}}
\newcommand{\al}{{\alpha}}
\newcommand{\ph}{{\varphi}}
\newcommand{\eps}{{\varepsilon}}
\newcommand{\si}{{\sigma}}
\newcommand{\lm}{{\lambda}}
\newcommand{\Deg}{{\mathrm{Deg}}}
\newcommand{\supp}{{\mathrm{supp}\,}}
\begin{document}
\title[Agmon estimates on graphs]
{Agmon estimates for Schr\"odinger operators on graphs}
\author[M.~Keller]{Matthias Keller}
\address{M.~Keller,  Institut f\"ur Mathematik, Universit\"at Potsdam
\\14476  Potsdam, Germany}
\email{matthias.keller@uni-potsdam.de}
\author[F.~Pogorzelski]{Felix Pogorzelski}
\address{F.~Pogorzelski, Institut f\"ur Mathematik, Universit\"at Leipzig
\\04109 Leipzig, Germany}
 \email{felix.pogorzelski@math.uni-leipzig.de}
\date{\today}
\begin{abstract}
We prove decay estimates for generalized eigenfunctions of  discrete Schr\"odinger operators on weighted infinite graphs in the spirit of Agmon.
\\[2mm]
\noindent  2000  \! {\em Mathematics  Subject  Classification.}
Primary  \! 39A12, 35P05; Secondary   31C20, 31C25, 35B09,  35R02, 47B39.\\[2mm]
\noindent {\em Keywords.} 
Generalized eigenfunctions, Agmon estimates, discrete Schr\"odinger operators, weighted graphs.
\end{abstract}
\maketitle
\setcounter{section}{-1}

\setcounter{section}{-1}
\section{Introduction}
Decay properties of  (generalized) eigenfunctions of Schrödinger operators are of great significance in spectral theory and  mathematical physics. In this paper, we study the decay of such eigenfunctions on graphs in the spirit of the work of Agmon from 1982, \cite{Ag82}.\smallskip

The spectral theory of Schrödinger and Laplace operators has been studied extensively in the last decade. This concerns on the one hand questions of spectral types, see e.g \cite{ABS,AF00,Br07,DFS,FHS,HN,KS14,KS15,KLWa2,Nag04,NW02} and the beautiful survey \cite{MoharWoess89} on earlier works.  On the other hand, various results on spectral bounds, purely discrete spectrum and asymptotics of the corresponding eigenvalues have been established recently, see e.g. \cite{BG15,BGK15,GHKLW,Gol14,HKSW,K10,KLSW,KMP,KPP,LSS,LSS2,Woj2}. \smallskip

While there is a huge industry studying decay properties of eigenfunctions for random operators, see e.g. the monograph \cite{AW_book} and references therein, relatively little is known about the decay of eigenfunctions in the setting of deterministic operators on general discrete graphs.  In contrast, for quantum graphs, Agmon estimates on the decay of eigenfunctions have been studied in \cite{AP17,AP18,HM18}. Furthermore, there are sophisticated considerations on Agmon estimates on the discrete lattice $ \Z^{d} $ in \cite{KR08,KR14,KR16}. Moreover, there are also applications of Combes-Thomas estimates for the decay of eigenfunctions of Schrödinger operators on $ \Z^{d} $, cf.\@ \cite{Man}.\smallskip

The results in this paper are based on the study of  Hardy and Rellich inequalities. Especially, we build on the earlier work which establishes these type of inequalities on graphs \cite{KePiPo2,KPP_Rellich} (see \cite{DFP,Fi00,FP11,Ro18,Ro20,Ro21} for related work on  operators in the continuum setting and Dirichlet forms). \smallskip

Moreover, when talking about generalized eigenfunction one should mention the relation to Shnol' type theorems \cite{Shnol,BLS,CFKS,BD19} which were recently studied in the context of graphs \cite{BP17,HK}. Furthermore, a general perspective is taken in \cite{LT} on studying generalized eigenfunctions on graphs. 
\smallskip

As an appetizer, we illustrate two special cases of our results in the case of bounded combinatorial graphs. Let a discrete set $ X $ with an adjacency relation $ \sim $ be given.  The combinatorial Laplacian   $ \Delta $ acts on functions $ f:X\to \R $ as
\begin{align*}
	\Delta f(x)=\sum_{y\sim x} (f(x)-f(y)).
\end{align*}
 For simplicity we assume in this introduction that there is a uniform upper bound $ D $ on the vertex degree, i.e.\@ the number of neighbors for any given vertex. 
Under this assumption the restriction of $ \Delta $ to the real Hilbert space $\ell^{2}(X)  $ gives rise to a bounded operator. Moreover, consider a  potential $ q:X\to \R$ such that a restriction of the Schrödinger operator $$ H= \Delta+q  $$ to $ \ell^{2}(X) $ yields a positive self-adjoint operator. 

First we  present a very simple qualitative consequence of our results. 
We say that a function $ u $ is \emph{decreases exponentially} (in the $ \ell^{2} $-sense) if there is $ r>0 $ such that 
\begin{align*}
	ue^{r|\cdot|}\in \ell^{2}(X),
\end{align*}
where $ |x| $ denotes the combinatorial graph distance of a vertex $ x $ to some fixed vertex. One of our results,  Theorem~\ref{t:main5} which is proven below, includes the following theorem as a  special case.

\begin{theorem}\label{t:intro1}
Every generalized eigenfunction of $H $  to an generalized eigenvalue strictly below the essential spectrum  of $ H $ which satisfies 	$ \|u\vert_{B_{n+1}\setminus B_{n-1}} \|\to 0$ as $ n\to\infty $,   decreases exponentially.\footnote{We learned from discussions with David Damanik and Sylvain Gol\'enia, that this statement can alternatively be  concluded by means of a Shnol' type theorem, see e.g. \cite[Theorem~4.8]{HK}, a Combes-Thomas estimate \cite[Theorem~10.5]{AW} and the holomorphic spectral calculus \cite[Sections III.4-5]{kato_book}.}
\end{theorem}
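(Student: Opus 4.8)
This is the qualitative shadow of the quantitative Agmon estimate of Theorem~\ref{t:main5}, and the plan is to give a direct proof in the combinatorial setting: convert the spectral gap $\lambda<\inf\sigma_{\mathrm{ess}}(H)$ into an effective positive potential near infinity, then run a ground-state/Agmon computation against an exponentially growing weight truncated both in space and in height, so that every weighted sum stays finite even though a priori $u\notin\ell^2(X)$. Throughout write $Q(f,g)=\tfrac12\sum_{x}\sum_{y\sim x}(f(x)-f(y))(g(x)-g(y))+\sum_x q(x)f(x)g(x)$ for the form of $H$ and $Q(f):=Q(f,f)$.

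First I would invoke Persson's theorem for the lower semibounded form of $H$ to fix $\lambda'\in(\lambda,\inf\sigma_{\mathrm{ess}}(H))$ and a finite ball $B_R$ with $Q(\varphi)-\lambda\|\varphi\|^2\ge 2\beta\|\varphi\|^2$, $2\beta:=\lambda'-\lambda>0$, for every finitely supported $\varphi$ vanishing on $B_R$. Next comes a discrete ground-state substitution of the type behind the graph Hardy inequalities of \cite{KePiPo2}: for any finitely supported $\phi$, expanding $(\phi(x)^2u(x)-\phi(y)^2u(y))(u(x)-u(y))$ and noting that the potential contributions cancel yields
\[
Q(\phi^2u,u)=Q(\phi u)-\tfrac12\sum_{x}\sum_{y\sim x}u(x)u(y)\,(\phi(x)-\phi(y))^2 .
\]
Since $\phi^2u$ is finitely supported, $H$ has finite hopping range (bounded degree), and $Hu=\lambda u$ pointwise, the left-hand side equals $\langle H(\phi^2u),u\rangle=\langle\phi^2u,Hu\rangle=\lambda\|\phi u\|^2$ — a legitimate finite-sum manipulation that does not require $u\in\ell^2(X)$. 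Hence $Q(\phi u)-\lambda\|\phi u\|^2=\tfrac12\sum_{x}\sum_{y\sim x}u(x)u(y)(\phi(x)-\phi(y))^2$.

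Now I would fix a small $r>0$ and a height $N$, and take $\phi=\phi_{n,N}:=\eta_n\,\zeta_R\,w_N$, where $w_N:=e^{\min(r|\cdot|,N)}$, $\zeta_R$ is a $1$-Lipschitz cutoff vanishing on $B_R$ and equal to $1$ off $B_{R+1}$, and $\eta_n$ is a $1$-Lipschitz cutoff with $\eta_n\equiv1$ on $B_n$ and $\eta_n\equiv0$ off $B_{2n}$. Then $\phi_{n,N}u$ is finitely supported and vanishes on $B_R$, so the identity's left-hand side is $\ge2\beta\|\phi_{n,N}u\|^2$. On the right-hand side I would split, for $x\sim y$,
\[
(\phi(x)-\phi(y))^2\le(1+\delta)\psi_n(x)^2(w_N(x)-w_N(y))^2+(1+\delta^{-1})w_N(y)^2(\psi_n(x)-\psi_n(y))^2
\]
with $\psi_n:=\eta_n\zeta_R$, and use $(w_N(x)-w_N(y))^2\le(e^r-1)^2w_N(x)w_N(y)$ (valid since $\log w_N$ is $r$-Lipschitz along edges). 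Cauchy--Schwarz and the degree bound $D$ dominate the first term by $(1+\delta)(e^r-1)^2C_D\|\phi_{n,N}u\|^2$ plus lower-order terms, while $\psi_n(x)-\psi_n(y)$ is supported on edges meeting $(B_{2n}\setminus B_n)\cup B_{R+1}$, so the second term is controlled by $C_D\,e^{2rN}n^{-2}\|u\vert_{B_{2n+1}\setminus B_{n-1}}\|^2$ (the outer cutoff error) plus a fixed finite constant from $B_R$, finite and independent of $N$ because $w_N$ on $B_{R+2}$ is dominated by $e^{r(R+2)}$. Choosing $r,\delta$ so small that $(1+\delta)(e^r-1)^2C_D\le\beta$ absorbs the Agmon term into the left-hand side. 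Since $\|u\vert_{B_{m+1}\setminus B_{m-1}}\|\to0$ forces $\|u\vert_{B_m\setminus B_{m-1}}\|\to0$, hence $\|u\vert_{B_M}\|^2=o(M)$ and $\|u\vert_{B_{2n+1}\setminus B_{n-1}}\|^2=o(n)$, the outer cutoff error is $e^{2rN}o(n^{-1})\to0$ as $n\to\infty$ with $N$ fixed; Young's inequality on what remains then gives $\|\phi_{n,N}u\|^2\le C_*$ with $C_*$ independent of $n$ and $N$. Two monotone limits ($n\to\infty$ with $\eta_n\uparrow1$, then $N\to\infty$ with $w_N\uparrow e^{r|\cdot|}$) yield $\|e^{r|\cdot|}u\,\mathbf{1}_{X\setminus B_{R+1}}\|^2\le C_*$, and adding the finite-ball term $e^{2r(R+1)}\|u\vert_{B_{R+1}}\|^2$ gives $ue^{r|\cdot|}\in\ell^2(X)$.

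The substantive point is not any single estimate but the bookkeeping forced by $u\notin\ell^2(X)$: all weighted sums must be kept finite, which is what dictates the double truncation (spatially by $\eta_n$, in height by $N$) and the order in which the limits are taken. The hypothesis $\|u\vert_{B_{n+1}\setminus B_{n-1}}\|\to0$ enters at exactly one place — annihilating the outer commutator term in the limit $n\to\infty$ — and is precisely what makes the conclusion hold at this generality (for a polynomially bounded generalized eigenfunction one could instead feed in a Shnol'-type bound). A minor point is that the absorption step produces only some small, non-explicit $r$; the sharp exponential rate, governed by the Agmon metric attached to the gap, is what the quantitative Theorem~\ref{t:main5} supplies.
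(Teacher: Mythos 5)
Your argument is correct, and at its analytic core it is the same proof as the paper's, unrolled into one self-contained computation instead of being routed through the general machinery. The paper proves Theorem~\ref{t:intro1} by (i) observing that the combinatorial distance over $\sqrt{D}$ is intrinsic, (ii) using the hypothesis $\|u\vert_{B_{n+1}\setminus B_{n-1}}\|\to 0$ solely to conclude that $u$ is \emph{approximable} (Proposition~\ref{p:growth} --- which is precisely your outer-cutoff estimate), and (iii) feeding this into Theorem~\ref{t:main5}, i.e.\ the Allegretto--Piepenbrink/Persson bound $h-\lambda\ge \gamma a$ off a finite set, the eikonal estimate of Lemma~\ref{l:estimate} for $g_N=e^{rd\wedge N}$, and the Rellich/Agmon theorems~\ref{p:main} and~\ref{t:main}. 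Your Persson step, your ground-state identity $Q(\phi u)-\lambda\|\phi u\|^2=\tfrac12\sum u(x)u(y)(\phi(x)-\phi(y))^2$ (Proposition~\ref{p:GST}, Lemma~\ref{l:Cacciopolli}), your edge bound $(w_N(x)-w_N(y))^2\le (e^r-1)^2 w_N(x)w_N(y)$ (Lemma~\ref{l:estimate}), the double truncation in space and height, and the order of the two limits all reappear verbatim in the proof of Theorem~\ref{p:main}. The one genuinely different piece of bookkeeping is the exceptional compact set: you insert an inner cutoff $\zeta_R$ into the test function and accept a fixed finite commutator term near $B_{R+1}$, whereas the paper (Lemma~\ref{l:positive}, used inside Theorem~\ref{t:main}) perturbs the form by a compactly supported potential $\chi$ so that the Hardy inequality holds on all of $C_c(X)$ and $\tilde\chi u$ becomes a compactly supported source $f$. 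Your variant is more elementary; the paper's buys uniformity in $f\in\ell^2(X,gw^{-1}m)$ and the non-locally-finite extension of Theorem~\ref{t:main1}. Two details to tighten, both routine: before splitting $(\phi(x)-\phi(y))^2$ you must first replace $u(x)u(y)$ by $\tfrac12(u(x)^2+u(y)^2)$, since it need not be nonnegative (this is the paper's passage from $|\nabla_{|u|}\psi|^2$ to $u^2|\nabla\psi|^2$ via Young); and your symmetrized ``first term'' pairs $\psi_n(x)^2$ with $u(y)^2w_N(y)^2$ rather than $\psi_n(y)^2$, so the resulting boundary mismatch must be folded into the commutator terms you already control.
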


We next turn to a more quantitative result. 
The philosophy of Agmon estimates is that generalized eigenfunctions which do not grow too fast indeed decay rapidly \cite{Ag82}. 

To this end assume that $ H $ satisfies a \emph{Hardy inequality} with a  \emph{Hardy weight} $ w:X\to [0,\infty) $ outside of a finite set $ K\subseteq X $, that is
\begin{align*}
	\langle H\ph,\ph\rangle \ge \langle w\ph,\ph\rangle
\end{align*}
for all functions $ \ph $ which are  compactly supported outside of $ K $. 

For example for Laplacian $ \Delta_{\Z^{d}} $, $ d\ge3 $, on the standard Cayley graph of $ \Z^{d} $ it was shown 
in \cite{KePiPo2} that there is a Hardy weight $ w_{\Z^{d}} $ with asymptotic
$$  w_{\Z^{d}} (x) = \frac{(d-2)^{2}}{4}\frac{1}{\|x\|_{\Z^{d}}^{2}} +O\left(\frac{1}{\|x\|_{Z^{d}}^3}\right),$$
where $ \|\cdot\|_{\Z^{d}} $ denotes the Euclidean norm on $ \Z^{d} $.

Given a Schrödinger operator $ H $ and a strictly positive Hardy weight $w  $, we define a metric as an analogue of the so called \emph{Agmon metric} in the continuum setting
$$ \varrho (x,y)=\inf_{x=x_{0}\sim\ldots \sim x_{n}=y}\sum_{i=1}^{n}(D\wedge w(x_{i})\wedge w(x_{i+1}))^{\frac{1}{2}} ,$$
where $ D $ is an upper bound on the vertex degree.
Furthermore, for some $ o\in X $, we denote $ \varrho=\varrho(o,\cdot) $. 
The following quantitative consequence of Theorem~\ref{c:main2} can be referred to as an Agmon estimate.

\begin{theorem}\label{t:intro2} Let $ u $ be a generalized eigenfunction of $ H $ to  $ \lambda $ and let $ w $  be a Hardy weight of $ H-\lambda $ outside of a compact set. 
If  the Agmon metric is a complete metric space and if  there is $ \al>0 $ with $ \al^{2}e^{\al}<8 $ and
\begin{align*}
		u\in \ell^{2}(X,we^{-(\al/\sqrt{D})\varrho}),
\end{align*}	
then,
	\begin{align*}
		u\in \ell^{2}(X,we^{(\al/\sqrt{D})\varrho}),
	\end{align*}
where $ D $ is an upper bound on the vertex degree.
\end{theorem}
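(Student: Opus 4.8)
The plan is to run the classical Agmon bootstrap, adapted to the combinatorial graph setting, where the Hardy inequality plays the role of the positivity of $H-\lambda$ near infinity and the Agmon metric $\varrho$ provides the exponential weight. The starting point is the Hardy inequality $\langle (H-\lambda)\ph,\ph\rangle \ge \langle w\ph,\ph\rangle$ for $\ph$ supported outside the compact set $K$. I would test this inequality with $\ph = \eta e^{t\psi} u$, where $u$ is the generalized eigenfunction, $\psi = (\al/\sqrt{D})\varrho$ is the (truncated) Agmon weight, $t$ ranges in $[0,1]$ as a deformation parameter that will be slid up from $0$, and $\eta$ is a cutoff that vanishes on a large ball $B_n$ and near $K$; the cutoff is needed because a priori $e^{t\psi}u$ need not be $\ell^2$, and it is removed at the end by letting $n\to\infty$ using the monotone convergence that is built into the hypothesis $u\in\ell^2(X,we^{-(\al/\sqrt D)\varrho})$ together with the improved bound one is proving.

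The key algebraic step is a discrete ground-state-type (Agmon) identity: for the Schr\"odinger form one has
\begin{align*}
	\langle (H-\lambda)(\eta e^{t\psi}u),\,\eta e^{t\psi}u\rangle \;=\; \langle (H-\lambda)u,\, \eta^2 e^{2t\psi}u\rangle \;+\; (\text{gradient terms}),
\end{align*}
and since $u$ is a generalized eigenfunction the first term on the right vanishes (modulo boundary contributions coming from $\eta$, which is why $\eta$ is taken to have support avoiding a neighborhood of $o$ and $K$ only where the eigenfunction equation genuinely holds). The gradient terms, computed via the discrete product rule $\nabla(fg)$, produce on the one hand the good term $\langle w\,\eta^2 e^{2t\psi}u,u\rangle$ from the Hardy inequality and on the other hand error terms of the shape $\sum_{x\sim y} b_{xy}\,|e^{t\psi(x)}-e^{t\psi(y)}|^2\,\eta^2 u^2$ and cross terms involving $\nabla\eta$. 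The crucial estimate is that, by the very definition of $\varrho$ as an infimal path sum of $(D\wedge w)^{1/2}$ over edges, one controls $|\psi(x)-\psi(y)| \le (\al/\sqrt D)(D\wedge w(x)\wedge w(y))^{1/2}$ along each edge, hence $|e^{t\psi(x)}-e^{t\psi(y)}|^2 \le t^2\,\psi\text{-Lipschitz bound}\cdot e^{2t\max\psi}\cdot(\al^2/D)(D\wedge w)$; summing against the edge weights $b_{xy}\le D$ gives an error bounded by $C\,t^2 e^{\cdots}\,\langle w\,\eta^2 e^{2t\psi}u,u\rangle$. The numerical condition $\al^2 e^{\al}<8$ is exactly what makes this error constant strictly less than $1$ for all $t\in[0,1]$ (the factor $8$ absorbing the combinatorial $2$'s from $|a-b|^2\le 2a^2+2b^2$ in the discrete chain rule and from the symmetrization over the two endpoints of an edge), so the error can be absorbed into the good Hardy term.

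Once the error is absorbed, one gets a uniform-in-$n$ bound $\langle w\,\eta_n^2 e^{2t\psi_n}u,u\rangle \le C\big(\langle w e^{-(\al/\sqrt D)\varrho}u,u\rangle + (\text{cutoff remainder supported near }K)\big)$, with $C$ independent of $n$ and of $t$; letting $n\to\infty$ by monotone convergence yields $u\in\ell^2(X,we^{2t\psi})$ for every fixed $t$ for which the bound is available, and in particular at $t=1$, which is the claim $u\in\ell^2(X,we^{(\al/\sqrt D)\varrho})$. I expect the main obstacle to be the careful justification of the formal integration by parts on an infinite graph: making sense of $\langle (H-\lambda)u,\eta^2e^{2t\psi}u\rangle=0$ requires that $\eta^2e^{2t\psi}u$ lie in the domain (or at least the form domain) of $H$, which is why the finitely-supported cutoff $\eta_n$ is indispensable, and requires that the completeness of the Agmon metric guarantee the truncations $\eta_n$ can be chosen with $\nabla\eta_n$ controlled (a discrete analogue of the existence of good Lipschitz cutoffs on a complete manifold), so that the $\nabla\eta$ cross terms vanish in the limit. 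The remaining bookkeeping — the discrete product and chain rules, the symmetrization $x\leftrightarrow y$, and the elementary inequality $|e^a-e^b|\le|a-b|e^{\max(a,b)}$ that feeds the $e^{\al}$ into the constant — is routine and presumably already packaged in \cite{KePiPo2,KPP_Rellich}, on which I would lean for the ground-state transform machinery.
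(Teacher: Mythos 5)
Your overall strategy is the right one and is essentially the route the paper takes: Theorem~\ref{t:intro2} is deduced from Theorem~\ref{c:main2}, whose proof is exactly the Agmon bootstrap you describe --- test the Hardy inequality against a cut-off, exponentially weighted copy of $u$, use a Caccioppoli/ground-state identity to isolate the gradient of the weight, bound that gradient by $\tfrac{\alpha^2e^{\alpha}}{8}$ times the Hardy term using the edgewise Lipschitz bound $\varrho(x,y)\le (D\wedge w(x)\wedge w(y))^{1/2}$ together with the elementary inequality $|e^a-e^b|^2\le\tfrac{e^{2a}+e^{2b}}{2}|a-b|^2$, and absorb. The cutoffs are indeed compactly supported thanks to completeness of the Agmon metric and a Hopf--Rinow type theorem.

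There is, however, a genuine gap at the point where you remove the cutoffs. Two things must be justified: (i) the absorption step requires the quantity $\sum_X u^2e^{2t\psi}w\,m$ that you subtract from both sides to be finite, and (ii) the cross terms involving $\nabla\eta_n$ must vanish as $n\to\infty$. With $|\nabla\eta_n|^2\le w/N^2$ the cross term is only of order $N^{-2}\sum_X u^2e^{2t\psi}wm$, which is useless if that full sum is infinite --- and its finiteness is exactly what you are trying to prove, so invoking ``monotone convergence together with the improved bound one is proving'' is circular. Your continuity-in-$t$ fallback also lacks a valid base point: at $t=0$ you would need $u\in\ell^2(X,wm)$, which is not among the hypotheses (only $u\in\ell^2(X,we^{-(\alpha/\sqrt D)\varrho})$ is). The device in the paper that closes this gap is to saturate the exponent: one takes $g_n=e^{\alpha(\rho\wedge(2n-\rho))/2}$, so that outside $B_n$ one has $g_n^2=e^{2\alpha n}e^{-\alpha\rho}$ and hence $\sum_X u^2g_n^2wm<\infty$ for each fixed $n$ by the hypothesis. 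This a priori finiteness simultaneously legitimizes the rearrangement/absorption and makes the $O(1/N)$ cutoff errors actually tend to zero; only afterwards is $n\to\infty$ taken by monotone convergence. Without this (or an equivalent) choice of truncated weight, your argument does not close.
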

A further application of  the theory developed in this paper is found in \cite{BGK2} for the special case of planar graphs with uniformly decaying curvature.


As mentioned above, the two theorems above are special cases of our main results which are proven in the setting of positive Schrödinger operators on locally finite weighted graphs. 
This general set up is discussed in the next section, Section~\ref{s:setup}, and we introduce some of the basic tools which are well known and needed throughout the paper. 

Afterwards in Section~\ref{s:approx}, we introduce the notion of approximable functions. This notion is  more flexible and provides a much more general version of the growth assumption in Theorem~\ref{t:intro1}. Indeed, the definition of approxim\-able functions is the only part of this section needed for the results and most of the applications and Section~\ref{s:Rellich} and Section~\ref{s:appl}.  As the concept seems to be interesting in its own right we study it in more depth in Section~\ref{s:approx}.

Our techniques rely on Rellich inequalities which are deduced from Hardy inequalities in Section~\ref{s:Rellich}. This was already studied in \cite{KPP_Rellich} on $ C_{c}(X) $, however here we have to do this on a larger space of functions. For strongly local Dirichlet forms this was done by Robinson \cite{Ro18}, although we take a somewhat different route in our approach. This allows us to prove a Rellich inequality, Theorem~\ref{p:main} and an Agmon estimate, Theorem~\ref{t:main}. These results are the main body of the work and all further applications which are then studied in Section~\ref{s:appl} rely on them. 

Among these applications in Section~\ref{s:appl} is Theorem~\ref{t:main2}, which is a discrete version of a classical theorem of Agmon from \cite{Ag82} and which includes Theorem~\ref{t:intro2} above as a special case.
Next, we prove a general result, Theorem~\ref{t:main5}, for eigenfunctions below the essential spectrum which includes Theorem~\ref{t:intro1} as a special case.  Then, we study graphs which satisfy a Cheeger inequality or are sparse and have purely discrete spectrum. The eigenvalue asymptotics of such graphs were studied in \cite{BG15,BGKLM20,Gol14} and we prove statements about the decay of the corresponding eigenfunctions here. Finally, we draw the connection to optimal Hardy inequalities which were studied in \cite{KePiPo2} for graphs building on ideas from corresponding considerations in the contiuum setting \cite{DFP,FP11}.

\section{Set up and Toolbox}\label{s:setup}
\subsection{Graphs,  operators, forms and intrinsic metrics}\label{Subsection-Formal}

Let $ b $ be a graph over a discrete measure space $ (X,m) $. That is, $ X $ is an infinite countable set equipped with the discrete topology, $ m:X\to(0,\infty) $ extends to a measure via $ m(A)=\sum_{x\in A}m(x)$ and  $b:X\times
X\to[0,\infty)$ is  a symmetric function with zero diagonal such that
$$\sum_{y\in  X}b(x, y)<\infty \quad \textup{ for all } x\in X.$$
For $ x,y\in X $ with $ b(x,y)>0 $, we write $ x\sim y $. We call a graph \emph{locally finite} if $ \#\{y\sim x\} <\infty$ for all $ x\in X $. Although this assumption appears in the main results, we will only assume it in this paper when the assumption is actually used.

For $W\subseteq X$, we denote by $C(W)$ (resp., $C_c(W)$) the space
of real valued functions on $W$ (resp., with compact support in
$W$). The space
$C(W)$  will be considered as a subspace of  $C(X)$ by extending functions by zero on $X\setminus W$. We denote the space of $m$-square summable functions by $\ell^{2}(X,m)$ with norm $ \|\cdot\|_{m} $ and we
denote the space of bounded functions by $\ell^{\infty}(X)$ with norm $ \|\cdot\|_{\infty} $. 

\medskip
\noindent
For a function $ f\in C(X) $, we denote
\begin{align*}
|\nabla f|(x)=\left(\frac{1}{2m(x)} \sum_{y\in X}b(x,y)(f(x)-f(y))^{2}\right)^{1/2},
\end{align*}
where the value $ \infty $ is allowed.
For a function $ q:X\to\R $, let $ h=h_{b,q}:C_{c}(X)\to \R $ be given by
\begin{align*}
h(\ph)=\sum_{ X}(|\nabla \ph|^{2}+q\ph^{2})m,
\end{align*}
which extends to a bilinear form by polarization. Here and also in the remainder of the paper, we use the short-hand notation $\sum_X f:= \sum_{x \in X} f(x) $ for $f \in C(X)$, whenever $ f\ge0 $ or the latter series converges absolutely.   

We identify a function $ w:X\to \R $ 
with the quadratic form on $ C_{c}(X) $ given by
$$  w(\ph)=\sum_{X}\ph^{2}wm ,\qquad \ph\in C_{c}(X).  $$
If $ w\ge 0 $ and
\begin{align*}
	 h\ge w \qquad\mbox{on }C_{c}(X),
\end{align*}
 we say that $ h $ satisfies a \emph{Hardy inequality} with \emph{Hardy weight} $ w $.

For the following, we assume that $ h $ is bounded from below on $ \ell^{2}(X,m) $. Then, $ h $ is a closable   and  we denote the closure also by $ h=h_{b,q} $ with domain $ D(h) $. 
Decomposing $ q $ into positive and negative part $ q=q_{+}-q_{-} $, we denote
$$  h_{+}=h_{b,q_{+}}  $$ and let
\begin{align*}
V=\{q \in C(X)\mid q_{-}\leq (1-\eps) h_{+}+C\mbox{ on }C_{c}(X) \mbox{ for some }\eps>0, C>0 \}.
\end{align*}
If $ q\in V $, we have by the KLMN Theorem, see \cite[Theorem X.17]{RSII} 
\begin{align*}
D(h_{+})=D(h)
\end{align*}
and
\begin{align*}
\eps h_{+}-C\leq h\leq h_{+}\qquad\mbox{on }D(h).
\end{align*}

The corresponding semi-bounded self-adjoint operator on $ \ell^{2}(X,m) $ is denoted by $ H $ with domain $ D(H) $ and can be seen by \cite{Schm3} to be a restriction of  the \emph{formal
	Schr\"odinger operator} $\mathcal{{H}}=\mathcal{H}_{b,q,m}$ acting as
\begin{align*}
{\mathcal{H}}f(x):=\frac{1}{m(x)}\sum_{y\in X}b(x,y)(f(x)-f(y))+q(x)f(x) \quad \mbox{ for } x \in X
\end{align*}
on the space $ \mathcal{F}:=\mathcal{F}(X) $ 
\begin{align*}
{\mathcal{F}}(X):=\{f\in C(X) \mid \sum_{y\in X}b(x,y)|f(y)|<\infty\mbox{ for
	all } x\in X\}.
\end{align*}
We call $ u\in \mathcal{F} $ a \emph{(sub/super-)solution} to $ \lm\in \R $ if
\begin{align*}
(\mathcal{H}-\lm)u=0 \qquad((\mathcal{H}-\lm)u\le 0, \,(\mathcal{H}-\lm)u\ge 0)
\end{align*}
Especially, (sub/super-)solutions to $ \lm=0$
are called \emph{(sub/super-)harmonic}.

\medskip
\noindent
A pseudo-metric is a symmetric function $X\times X\to[0,\infty)$ with zero diagonal that satisfies the triangle inequality. The \emph{jump size} $s$ of a pseudo-metric $d$ is given by
\begin{align*}
s:=\sup\{d(x,y)\mid x,y\in X, b(x,y)>0\}.
\end{align*}
An \emph{intrinsic metric} $d$ is a pseudo-metric that satisfies
\begin{align*}
\sum_{y\in X}b(x,y)d(x,y)^{2}\leq m(x)
\end{align*}
for all $x\in X$. For a set $ U\subseteq X $, we denote
\begin{align*}
d(U,x)=\inf_{y\in U}d(x,y),\qquad x\in X.
\end{align*}
In view of the introduction, we note that whenever the vertex degree $\Deg(x)=\frac{1}{m(x)} \sum_{y\in X}b(x,y) $, $ x\in X $ is bounded by a constant $ D $, then the combinatorial graph distance divided by $ \sqrt{D} $  is an intrinsic metric.

Next, we collect various results which we need along the way to prove the main results of the paper.

\subsection{The ground state transform and an Allegretto-Piepenbrink type theorem}
The tools introduced in this section are well established. However, for the readers convenience we recall these results and even sketch some of the proofs. We start with a Green's formula and then discuss a so called ground state transform. Afterwards we present a consequence of the Allegretto-Piepenbrink  theorem for the bottom of the  spectrum $\lm_{0}(H)$ and the bottom of the essential spectrum $\lm_{0}^{\mathrm{ess}}(H)$ of the operator $ H $.

\noindent

We recall a rather general version of Green's formula. For related versions of this formula see \cite{Do84, Woj1, KL1, HK, KePiPo1} and references therein.

\begin{lemma}[Green's formula]\label{l:Green} Let $ a:X\times X\longrightarrow \R $ be given  such that $ \sum_{y\in X}|a(x,y)| <\infty$ for all $ x\in X $. Then, for all $ u:X\longrightarrow\R $ such that $ \sum_{y\in X}|a(x,y)u(y)| <\infty$ for each $x \in X$ and all $ \ph\in C_{c}(X) $
\begin{align*}
\frac{1}{2}	\sum_{x,y\in X}a(x,y)(u(x)-u(y )(\ph(x)-\ph(y)) =\sum_{x\in X} u(x)\sum_{y\in X}a(x,y)(\ph(x)-\ph(y)).
\end{align*}
Specificially, we have for all $ \ph,\psi\in C_{c}(X) $
\begin{align*}
h(\psi,\ph )=\sum_{X}\psi \mathcal{H}\ph m.
\end{align*}	
\end{lemma}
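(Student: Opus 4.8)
The plan is to prove both identities by a careful double-summation argument, being attentive to absolute convergence at each step since the graph is not assumed locally finite and $u$ need not be compactly supported. The essential point is that $\varphi \in C_c(X)$ has finite support, so every sum against $\varphi$ or its differences is effectively a finite sum, and this is what makes the interchange of summation order legitimate.

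First I would fix $\varphi \in C_c(X)$ with support contained in a finite set $S \subseteq X$. For the left-hand side, write $(u(x)-u(y))(\varphi(x)-\varphi(y)) = u(x)\varphi(x) - u(x)\varphi(y) - u(y)\varphi(x) + u(y)\varphi(y)$ and note that each of the four resulting double sums converges absolutely: the terms $u(x)\varphi(x)$ and $u(y)\varphi(y)$ vanish unless $x \in S$ (resp. $y \in S$), in which case $\sum_y |a(x,y)| < \infty$ (resp. the symmetric statement) controls them; the cross terms $u(x)\varphi(y)$ and $u(y)\varphi(x)$ vanish unless $y \in S$ (resp. $x \in S$), and then $\sum_y |a(x,y)u(y)| < \infty$ for each $x$ together with finiteness of $S$ gives absolute convergence. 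Hence the double sum on the left can be split, reindexed, and recombined freely. Using the symmetry $a(x,y) = a(y,x)$, the pair of terms arising from $u(x)$ combine to $\sum_x u(x)\sum_y a(x,y)(\varphi(x)-\varphi(y))$, and by symmetry the pair arising from $u(y)$ give the same quantity; the factor $\tfrac12$ on the left then yields exactly the right-hand side. For the reindexing by symmetry one swaps the names of the summation variables in the $u(y)$-terms, which is permitted precisely because of the absolute convergence just established.

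For the specialization, I would apply the first identity with $a(x,y) = b(x,y)$ and with $u = \psi \in C_c(X)$; then the hypotheses are trivially satisfied since $\sum_y b(x,y) < \infty$ and $\psi$ has finite support. The left-hand side becomes $\tfrac12\sum_{x,y} b(x,y)(\psi(x)-\psi(y))(\varphi(x)-\varphi(y))$, which equals $h_{b,0}(\psi,\varphi)$; adding the potential term $\sum_X q\psi\varphi\, m$ to both sides — legitimate because $\psi,\varphi \in C_c(X)$ — turns the left side into $h(\psi,\varphi)$ and the right side into $\sum_x \psi(x)\big(\sum_y b(x,y)(\varphi(x)-\varphi(y)) + q(x)\varphi(x)\big) = \sum_X \psi\,\mathcal{H}\varphi\, m$ after inserting and cancelling the factor $m(x)/m(x)$ in the definition of $\mathcal{H}$.

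The only real obstacle is bookkeeping the absolute convergence so that Fubini/Tonelli and the symmetry-swap are justified without local finiteness; once that is in place the computation is purely algebraic. I expect no genuine difficulty, only the need to be explicit about which of the finitely many support conditions controls each of the four pieces.
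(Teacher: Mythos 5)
Your proof is correct and is exactly the ``direct algebraic manipulation using the assumptions to ensure the absolute convergence of the involved sums'' that the paper's one-line proof alludes to, with the convergence bookkeeping correctly attributed to the finite support of $\ph$ and the two summability hypotheses. The only point worth flagging is that you invoke the symmetry $a(x,y)=a(y,x)$, which is genuinely needed for the identity (and for controlling the sums over the first variable) but is not listed among the lemma's stated hypotheses; it does hold in every application in the paper (e.g.\ $a=b$ or $a(x,y)=b(x,y)v(x)v(y)$), so your argument matches the intended one.
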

\begin{proof}This follows by direct algebraic manipulation using the assumptions to ensure the absolute convergence of the involved sums.
\end{proof}

The following ground state transform for graphs is proven in \cite[Proposition~4.8]{KePiPo1},  see also \cite{FLW,FS08,Gol14,HK} and references therein.
 For a positive function $ v $, we denote for $ \ph\in C_{c}(X) $
\begin{align*}
|\nabla_{v} \ph|^{2}(x)=\frac{1}{2m(x)}\sum_{y\in X}b(x,y)v(x)v(y)(\ph(x)-\ph(y))^{2}
\end{align*}
for $ x\in X $, where the value $ +\infty $ is allowed. However, $ |\nabla_{v}\ph|^{2}<\infty $ for all $ \ph\in C_{c}(X) $ whenever $ v\in \mathcal{F} $ which is easily seen by the Cauchy-Schwarz inequality and the summability assumption on $b $.

\begin{pro}[Ground state transform]\label{p:GST} For all $ v \in \mathcal{F}$ and $ \ph\in C_{c}(X) $ we have
	\begin{align*}
\frac{1}{2}	\sum_{x,y\in X}b(x,y)v(x)v(y)(\ph(x)-\ph(y))^{2}+\sum_{x\in X}( v\ph^{2})(x)\mathcal{H}v(x) m(x)= h(v\ph).
	\end{align*}
	In particular, if $ v \in  \mathcal{F} $ is a positive supersoluton to $ \lm\in\R $  on $ U $, then for all $ \ph\in C_{c}(U) $
\begin{align*}
\sum_{ X}|\nabla_{v}\ph|^{2}m\leq (h-\lm )(v\ph).
\end{align*}
If furthermore, $ v $ is a solution then equality holds.
\end{pro}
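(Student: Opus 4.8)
The plan is to prove the identity by expanding $h(v\ph)$ and invoking the elementary factorization
\begin{align*}
\big(v(x)\ph(x)-v(y)\ph(y)\big)^2
= v(x)v(y)(\ph(x)-\ph(y))^2+\ph(x)^2v(x)(v(x)-v(y))+\ph(y)^2v(y)(v(y)-v(x)),
\end{align*}
which is checked by multiplying out. Since $\ph\in C_c(X)$ we have $v\ph\in C_c(X)$, so $h(v\ph)$ equals the finite sum $\tfrac12\sum_{x,y}b(x,y)\big((v\ph)(x)-(v\ph)(y)\big)^2+\sum_x q(x)v(x)^2\ph(x)^2 m(x)$. Substituting the factorization into the difference term, the contribution of $v(x)v(y)(\ph(x)-\ph(y))^2$ is precisely the first summand of the asserted formula.

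For the two remaining cross terms I would use the symmetry $b(x,y)=b(y,x)$: after relabelling $x\leftrightarrow y$ the two terms agree, and $\tfrac12\sum_{x,y}b(x,y)[\ph(x)^2v(x)(v(x)-v(y))+\ph(y)^2v(y)(v(y)-v(x))]=\sum_x \ph(x)^2 v(x)\sum_y b(x,y)(v(x)-v(y))$. Because $v\in\mathcal F$, the inner sum converges and equals $m(x)\big(\mathcal Hv(x)-q(x)v(x)\big)$ by the definition of $\mathcal H$; hence this expression is $\sum_x (v\ph^2)(x)\mathcal Hv(x)m(x)-\sum_x q(x)v(x)^2\ph(x)^2 m(x)$, and the second term here cancels the potential part of $h(v\ph)$. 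This yields the claimed identity. (One may also obtain the cross-term step from Green's formula, Lemma~\ref{l:Green}, with $a=b$, $u=v$, and compactly supported test function $v\ph^2$, but the direct expansion seems shortest.) The one point that needs genuine care — and the only real obstacle — is checking that all the double sums in play converge absolutely, so that the rearrangements are legitimate; since $v$ is only assumed to lie in $\mathcal F$, one argues by splitting the summation according to whether $x\in\supp\ph$ (using $\sum_y b(x,y)|v(y)|<\infty$) or $x\notin\supp\ph$ (where $\ph(x)=0$, leaving only the finitely many $y\in\supp\ph$), and likewise for $\tfrac12\sum_{x,y}b(x,y)v(x)v(y)(\ph(x)-\ph(y))^2$ as already remarked before the statement.

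For the ``in particular'' claims, by definition $\tfrac12\sum_{x,y}b(x,y)v(x)v(y)(\ph(x)-\ph(y))^2=\sum_X|\nabla_v\ph|^2 m$ and $\lm\sum_X(v\ph)^2 m=\sum_x \lm v(x)^2\ph(x)^2 m(x)$, so the main identity rearranges to
\begin{align*}
(h-\lm)(v\ph)=\sum_X|\nabla_v\ph|^2 m+\sum_{x\in X} v(x)\ph(x)^2\big(\mathcal Hv(x)-\lm v(x)\big)m(x).
\end{align*}
If $v>0$ is a supersolution to $\lm$ on $U$ and $\ph\in C_c(U)$, then $\ph$ vanishes off $U$ and on $U$ each factor $v(x)$, $\ph(x)^2$, $\mathcal Hv(x)-\lm v(x)$ is nonnegative, so the last sum is $\ge 0$, giving the stated inequality; if in addition $(\mathcal H-\lm)v=0$ on $U$, the last sum vanishes and equality holds.
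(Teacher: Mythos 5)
Your proof is correct. It is essentially the same computation as the paper's, just organized at the level of the quadratic form rather than the operator: the paper first verifies the pointwise identity $v(x)\mathcal{H}(v\ph)(x)=(v\ph)(x)\mathcal{H}v(x)+\frac{1}{m(x)}\sum_{y}b(x,y)v(x)v(y)(\ph(x)-\ph(y))$, then multiplies by $\ph(x)m(x)$, sums over $x$, and converts the left-hand side into $h(v\ph)$ via Green's formula (Lemma~\ref{l:Green}), whereas you expand $h(v\ph)$ directly through the algebraic factorization of $\big((v\ph)(x)-(v\ph)(y)\big)^2$ and symmetrize in $x,y$. The two routes are dual to each other precisely through Green's formula, which you correctly identify as optional in your version; your remark on absolute convergence (splitting according to whether $x$ lies in $\supp\ph$, using $v\in\mathcal{F}$) supplies the justification for the rearrangements that the paper leaves implicit. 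The derivation of the ``in particular'' statements from the identity, using positivity of $v$, $\ph^2$ and $(\mathcal{H}-\lm)v$ on $U$ and the vanishing of $\ph$ off $U$, matches the intended argument.
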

\begin{proof}
	By a direct calculation we have for all $ x\in X $
	\begin{align*}
	v(x)\mathcal{H}(v\ph)(x)=(v\ph)(x)\mathcal{H}v(x)+\frac{1}{m(x)}\sum_{y\in X} b(x,y)v(x)v(y)(\ph(x)-\ph(y)).
	\end{align*}
	Thus, the statement now follows by summing over $ x $ after multiplying by $\ph(x) m(x) $ and then applying Green's formula to the term on the left hand side.
\end{proof}

Next, we present a version of  the so-called Allegretto-Piepenbrink theorem. Related results for graphs and Dirichlet forms are found in \cite{BLS,Do84,FLW,HK,KePiPo1} and see also references therein.

\begin{pro}[Allegretto-Piepenbrink theorem]\label{p:AP}
	Let $\lambda \in \R$. 
	\begin{itemize}
		\item[(a)] One has $ \lambda\leq \lambda_{0}(H) $ if and only if there is a strictly positive supersolution to $ \lambda $.
			\item [(b)]	Suppose that the graph is locally finite.
		For all $ \lm < \lm_{0}^{\mathrm{ess}}(H)$ there exists a finite set $ K\subseteq X $ and a strictly positive solution to $ \lm $ and 
		$ h-\lm \ge0 $ on $ C_{c}(X\setminus K) $. If there is a strictly positive supersolution to $\lambda$ on $X \setminus K$ for some finite set $K$, then $\lambda \leq \lambda_0^{\mathrm{ess}}(H)$. 
	\end{itemize}

\end{pro}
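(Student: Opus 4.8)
The plan is to treat the two directions of part (a) and then part (b) separately, using the ground state transform (Proposition~\ref{p:GST}) as the main engine together with an exhaustion argument.

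\textbf{Proof of (a).} For the ``only if'' direction, assume $\lm \le \lm_0(H)$. Then $h - \lm \ge 0$ on $C_c(X)$, so we have a nonnegative, closable quadratic form, and by general theory (solvability of the Dirichlet problem on finite subsets, or a direct compactness/normal-families argument) there exists a strictly positive \emph{solution} $v$ to $\lm$; in particular $v$ is a strictly positive supersolution. Concretely, I would fix $o \in X$, let $(X_n)$ be an exhaustion of $X$ by finite connected sets, solve $(\mathcal H - \lm) v_n = 0$ on $X_n \setminus \{o\}$ with $v_n(o) = 1$ and Dirichlet conditions outside $X_n$ (using $h - \lm \ge 0$ and a minimum principle to get positivity and a Harnack-type bound), normalize, and extract a pointwise-convergent subsequence by local finiteness; the limit $v$ is a strictly positive solution. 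For the ``if'' direction, suppose $v$ is a strictly positive supersolution to $\lm$ on $X$. Then the ground state transform (Proposition~\ref{p:GST}, supersolution case with $U = X$) gives, for all $\ph \in C_c(X)$,
\begin{align*}
0 \le \sum_X |\nabla_v \ph|^2 m \le (h - \lm)(v\ph).
\end{align*}
Since $v$ is strictly positive, $\{v\ph \mid \ph \in C_c(X)\} = C_c(X)$, so $h - \lm \ge 0$ on all of $C_c(X)$, whence $\lm \le \lm_0(H)$ by the variational characterization.

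\textbf{Proof of (b).} For the first assertion, fix $\lm < \lm_0^{\mathrm{ess}}(H)$. By a Persson-type characterization of the bottom of the essential spectrum (or by decomposing $H$ into a finite part and a compact perturbation and using that only finitely many eigenvalues lie below $\lm_0^{\mathrm{ess}}$), there is a finite set $K \subseteq X$ with $h - \lm \ge 0$ on $C_c(X \setminus K)$. Restricting to the graph over $X \setminus K$ (with the induced data) and applying part (a) to this restricted operator yields a strictly positive solution to $\lm$ on $X \setminus K$, which after possibly enlarging $K$ and using local finiteness one can arrange. For the converse, if $v$ is a strictly positive supersolution to $\lm$ on $X \setminus K$ for some finite $K$, the same ground state transform argument as above (now with $\ph \in C_c(X \setminus K)$) gives $h - \lm \ge 0$ on $C_c(X \setminus K)$, and by the Persson-type formula $\lm \le \lm_0^{\mathrm{ess}}(H)$.

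\textbf{Main obstacle.} The routine algebra is handled entirely by Green's formula and the ground state transform; the real work is in the ``only if'' direction of (a), namely \emph{producing} a strictly positive solution (not merely a supersolution) from the inequality $h - \lm \ge 0$. This requires a Harnack/minimum principle on finite subgraphs to keep the approximating solutions $v_n$ positive and locally uniformly bounded away from $0$ and $\infty$, so that the diagonal/normal-families limit survives. In (b) the analogous subtlety is getting from ``$h - \lm \ge 0$ on $C_c(X \setminus K)$'' to an actual solution \emph{on} $X \setminus K$ while controlling $K$; this is where local finiteness is genuinely used, and one may need to enlarge $K$ by a finite amount so that $X \setminus K$ behaves like a graph in its own right.
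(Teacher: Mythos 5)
Your route is genuinely different from the paper's: the paper proves nothing here, it identifies $H$ as a restriction of $\mathcal H$ via \cite[Theorem~3.8]{Schm3} and then cites \cite[Theorem~4.2~(b),(c)]{KePiPo1} for both parts, whereas you reconstruct that cited proof from scratch. The parts of your argument that run through the ground state transform are correct and are exactly how these implications are standardly obtained: for the ``if'' direction of (a), Proposition~\ref{p:GST} applies because a supersolution is by definition in $\mathcal F$, and $\{v\ph:\ph\in C_c(X)\}=C_c(X)$ for strictly positive $v$, so $h-\lm\ge 0$ on the form core $C_c(X)$. For the converse in (b) you still owe a proof (or citation) of the Persson-type identity $\lm_0^{\mathrm{ess}}(H)=\sup_K\inf\{h(\ph)\mid\ph\in C_c(X\setminus K),\,\|\ph\|_m=1\}$; within this paper's own toolbox one can avoid it by using Lemma~\ref{l:positive} --- this is precisely where local finiteness enters, making $N(K)$ finite and $\chi$ finitely supported --- to pass from $h-\lm\ge 0$ on $C_c(X\setminus K)$ to $h+\chi-\lm\ge 0$ on $C_c(X)$, and then noting that multiplication by the finitely supported $\chi$ is a finite-rank perturbation, so $\lm\le\lm_0(H+\chi)\le\lm_0^{\mathrm{ess}}(H+\chi)=\lm_0^{\mathrm{ess}}(H)$.

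The one place where the sketch is thinner than what is required is the ``only if'' direction of (a), which you rightly flag as the main obstacle, but two caveats remain. First, part (a) does \emph{not} assume local finiteness, so you cannot invoke it for the compactness step; extracting a pointwise convergent subsequence needs only countability of $X$ together with local Harnack bounds, but passing to the limit in $(\mathcal H-\lm)v_n=0$ requires justifying $\sum_y b(x,y)v_n(y)\to\sum_y b(x,y)v(y)$, i.e.\ a domination argument showing the limit lies in $\mathcal F$ and the equation survives --- this is where the Harnack estimates are genuinely used in \cite{KePiPo1}, not merely to keep the $v_n$ positive. Second, the minimum principle and the Harnack inequality on finite subgraphs for operators with sign-changing $q$ are themselves not free and would have to be proved or cited. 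So as a self-contained replacement for the paper's citation, your proof needs these ingredients spelled out; as an outline of what the cited theorem actually does, it is accurate.
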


\begin{proof}
	(a) By  \cite[Theorem~3.8]{Schm3}, $H$ is a restriction of $\mathcal{H}$. Thus, the statement follows from the Allegretto-Piepenbrink theorem proven in 
	\cite[Theorem 4.2~(b)]{KePiPo1}. \\
	(b) This follows from \cite[Theorem~4.2 (c)]{KePiPo1}. 
	\end{proof}

\begin{rem}There is the question whether in the Allegretto-Piepenbrink theorem above one has a supersolution for $ \lambda=\lambda_{0}^{\mathrm{ess}}(H). $ This can indeed be achieved if there are only finitely many eigenvalues of finite multiplicity below $ \lambda_{0}^{\mathrm{ess}}(H) $. This is indeed a characterization and can be concluded from \cite[Theorem~1.3/Corollary~1.4]{Si11} and is shown in the case of manifolds in \cite{Dev12}.	
\end{rem}

\subsection{A Caccioppoli type inequality}

In this section we provide a Caccioppoli type inequality which is in one form or another known to experts as well.

The following auxiliary lemma is a variant of a corresponding results for  for Dirichlet forms from \cite[Lemma~3.5]{GHM}.
\begin{lemma}\label{l:Lipform}Assume $ q\in V $ and $ w\ge 0 $. Let $\psi\in \ell^{\infty}(X)$ be such that
	\begin{align*}
	|\nabla\psi|^2\leq w.
	\end{align*}
	Then, for $ u\in D(h)\cap \ell^{2}(X,wm)  $, we have
$ u\psi\in D(h)\cap \ell^{2}(X,wm)$.\end{lemma}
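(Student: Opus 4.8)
The goal is to show that multiplying an element $u\in D(h)\cap\ell^2(X,wm)$ by a bounded Lipschitz-type cutoff $\psi$ (in the sense $|\nabla\psi|^2\le w$) keeps us in the same space. The plan is to argue in two stages: first the membership $u\psi\in\ell^2(X,wm)$, which is immediate, and then the membership $u\psi\in D(h)$, which is the substantive part and will be obtained by approximation together with a uniform form bound.

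First I would dispatch $u\psi\in\ell^2(X,wm)$: since $\psi$ is bounded, $\sum_X (u\psi)^2 wm\le\|\psi\|_\infty^2\sum_X u^2 wm<\infty$. Next, to handle $u\psi\in D(h)$, recall that by assumption $q\in V$, so by the KLMN bounds stated above $D(h)=D(h_+)$ and $\eps h_+-C\le h\le h_+$ on $D(h)$; hence it suffices to prove $u\psi\in D(h_+)$, i.e. I may as well work with $q_+\ge 0$ and prove closability statements for the nonnegative-potential form. Pick $\ph_n\in C_c(X)$ with $\ph_n\to u$ in $D(h)$ (equivalently in the $h_+$-form norm). The natural candidate approximants for $u\psi$ are $\ph_n\psi\in C_c(X)$. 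I would show that $(\ph_n\psi)_n$ is $h_+$-form Cauchy, and that it converges to $u\psi$ in $\ell^2(X,m)$; since $h_+$ is closed, this yields $u\psi\in D(h_+)=D(h)$.

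The key estimate — and the main obstacle — is a Leibniz/Caccioppoli-type bound controlling $h_+(\ph\psi)$, or really $h_+((\ph-\tilde\ph)\psi)$ for the Cauchy argument, in terms of $h_+(\ph-\tilde\ph)$, $\|(\ph-\tilde\ph)\|_m$-type quantities, and the $wm$-weighted norms, uniformly in $\psi$. Concretely, expanding
\begin{align*}
|\nabla(\ph\psi)|^2(x)=\frac{1}{2m(x)}\sum_{y}b(x,y)\big(\ph(x)\psi(x)-\ph(y)\psi(y)\big)^2,
\end{align*}
one writes $\ph(x)\psi(x)-\ph(y)\psi(y)=\psi(x)(\ph(x)-\ph(y))+\ph(y)(\psi(x)-\psi(y))$, uses $(a+b)^2\le 2a^2+2b^2$, and the hypothesis $|\nabla\psi|^2\le w$ to get
\begin{align*}
\sum_X|\nabla(\ph\psi)|^2 m\le 2\|\psi\|_\infty^2\sum_X|\nabla\ph|^2 m+2\sum_X \ph^2\,|\nabla\psi|^2\, m\le 2\|\psi\|_\infty^2\sum_X|\nabla\ph|^2 m+2\sum_X \ph^2 w\, m,
\end{align*}
where some care is needed because the cross-term is only $\le$ after pointwise use of $(\psi(x)-\psi(y))^2$ summed against $b(x,\cdot)$; the telescoping of $\ph(y)^2$ versus $\ph(x)^2$ against the $\psi$-gradient is handled by a symmetrization/relabeling argument, possibly introducing an extra harmless factor. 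Together with $q_+(\ph\psi)^2\le\|\psi\|_\infty^2 q_+\ph^2$ this gives $h_+(\ph\psi)\le \|\psi\|_\infty^2 h_+(\ph)+2\|\ph\|_{wm}^2$ for all $\ph\in C_c(X)$. Applying this to $\ph_n-\ph_k$ and using that $(\ph_n)$ is $h_+$-Cauchy while $\ph_n\to u$ in $\ell^2(X,wm)$ (the latter because $\ph_n\to u$ in $D(h)$, and one can arrange the approximating sequence to also converge in the $wm$-norm, or invoke that $C_c(X)$ is a form core and a density argument in $D(h)\cap\ell^2(X,wm)$ — this density point is the subtle spot to be careful about) shows $(\ph_n\psi)$ is $h_+$-Cauchy. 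Since $\ph_n\psi\to u\psi$ in $\ell^2(X,m)$ and $h_+$ is closed, $u\psi\in D(h_+)=D(h)$, completing the proof.
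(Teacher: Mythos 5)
Your proposal follows essentially the same route as the paper's proof: the Leibniz/Young estimate $h(v\psi)\le 2\|\psi\|_\infty^2 h_+(v)+2\|v\|_{wm}^2$ for $v\in C_c(X)$, the KLMN equivalence $D(h)=D(h_+)$ with the two-sided form bounds for $q\in V$, and approximation of $u$ by compactly supported functions applied to differences to produce an $h$-Cauchy sequence whose $\psi$-multiples converge to $u\psi$. The ``subtle spot'' you flag --- arranging that the approximants also converge in $\ell^2(X,wm)$ so that the term $\|\ph_n-\ph_k\|_{wm}^2$ actually tends to zero --- is present in the paper's proof as well, where it is passed over without comment.
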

\begin{proof}
By elementary estimates and the assumption $ |\nabla\psi|^2\leq w $, we infer 	for $ v\in C_{c}(X) $
 \begin{align*}
	h(v\psi)\leq 2\|\psi\|_{\infty}^{2}h_{+}(v)+2\sum_{X}v^{2}|\nabla\psi|^2m\leq 2\|\psi\|_{\infty}^{2}h_{+}(v)+2\|v\|^{2}_{wm}.
	\end{align*}
    As $ q\in V $, we have $ D(h)=D(h_{+}) $ and, therefore, $ u \in D(h_{+}) $. 		
	Let  $ (u_{n}) $ be a sequence of compactly supported functions  which approximates $ u $ in form norm of $ h_{+} $. 		
	Set $ v=u_{k}-u_{n} $ for $ k,n\in \N $. We use $ q\in V $  together with the estimate in the beginning of the proof  to obtain
	\begin{align*}
 {\varepsilon}	h_{+}(v{\psi})-C\|v{\psi}\|_{m}\leq h(v\psi)\leq 
	2\|\psi\|_{\infty}^{2}h_{+}(v)+2\|v\|^{2}_{wm}
	\end{align*}
	for suitable choices for $\varepsilon, C > 0$. This shows that $ (u_{n}\psi) $ is a Cauchy-sequence with respect to $ h $. By the boundedness of $ \psi $ the assumption that $ u_{n}\to u $ in $ \ell^{2}(X,m) $ we also have that  $ (u_{n}\psi) $ converges to $ u\psi $ in $ \ell^{2}(X,m) $. Thus,  $ u\psi\in D(h) $. Finally, boundedness of $ \psi $  and $ u\in\ell^{2}(X,wm) $ also yields that $ u\psi\in \ell^{2}(X,wm) $.
\end{proof}



Recall the discrete Leibniz rule for functions $ f,g\in C(X) $ and $ x,y\in X $
\begin{align*}
(fg)(x)-(fg)(y)=f(x)(g(x)-g(y))+g(y)(f(x)-f(y))
\end{align*}
Furthermore,  we denote for $ v\in \mathcal{F} $ and $ \psi\in C_{c}(X) $ and 
$ x \in X $
\begin{align*}
	\nabla v\cdot\nabla\psi (x): = \frac{1}{2m(x)} \sum_{y \in X} {b(x,y)}(v(x)-v(y))(\psi(x)-\psi(y))
\end{align*}
which is an absolutely converging sums by $ v\in \mathcal{F} $, $ \ph\in C_{c}(X) $ and summability of $  b$. We show a version of  a Caccioppoli type inequality which extends for example \cite[Lemma~3.4]{HKMW}.

\begin{pro}[Caccioppoli type inequality]\label{l:Cacciopolli} Assume one of the following assumptions:
	\begin{itemize}
		\item $u\in \mathcal{F}$ and $\psi\in C_{c}(X)$  or
		\item  $ u\in D(H) $, $\psi\in \ell^{\infty}(X)$ such that
		$ 		|\nabla\psi|^2\leq w  $ for some $ w\ge0 $ and there is a Hardy inequality $ h\ge w $ on $ C_{c}(X), $ as well as 
		$ q\in V $.
	\end{itemize}
	Then,
	\begin{align*}
	h(\psi u)\leq \sum_{X}|\nabla_{|u|} \psi|^{2} m  +\sum_{X}(\mathcal{ H} u)(\psi^{2}u)m.
	\end{align*}
\end{pro}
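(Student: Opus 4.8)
The plan is to expand $h(\psi u)$ using the discrete product/Leibniz rule and then recognize the three resulting pieces as the Dirichlet-form gradient term, a cross term, and a potential term, reorganizing the cross term via Green's formula into the expression $\sum_X (\mathcal{H}u)(\psi^2 u)m$.

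\textbf{The computation at the core.} Starting from $h(\psi u) = \frac{1}{2}\sum_{x,y} b(x,y)((\psi u)(x)-(\psi u)(y))^2 + \sum_X q\psi^2 u^2 m$, I would apply the Leibniz rule to write $(\psi u)(x)-(\psi u)(y) = \psi(x)(u(x)-u(y)) + u(y)(\psi(x)-\psi(y))$ and expand the square. Symmetrizing carefully (splitting the cross term into an $x$-version and a $y$-version and averaging), the standard identity that emerges is
\begin{align*}
\frac{1}{2}\sum_{x,y} b(x,y)((\psi u)(x)-(\psi u)(y))^2 &= \frac{1}{2}\sum_{x,y} b(x,y)\,\psi(x)\psi(y)(u(x)-u(y))^2 \\
&\quad + \frac{1}{2}\sum_{x,y} b(x,y)\,u(x)u(y)(\psi(x)-\psi(y))^2 \\
&\quad - \sum_{x,y} b(x,y)(u(x)-u(y))(\psi(x)u(x)-\psi(y)u(y))\cdot(\ldots)
\end{align*}
— in fact the cleanest route is: expand, use symmetry in $x\leftrightarrow y$, and collect to get the gradient-type term $\sum_X |\nabla_{|u|}\psi|^2 m$ plus a remainder that, after Green's formula (Lemma~\ref{l:Green}) applied to $a(x,y)=b(x,y)$ and the function $u$ against test function $\psi^2 u$, equals $\sum_X (\mathcal{H}u)(\psi^2 u)m - \sum_X q\psi^2 u^2 m$. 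The potential term $\sum_X q\psi^2 u^2 m$ then cancels against the corresponding piece inside $\mathcal{H}$, and one is left with the claimed inequality — indeed an equality in the first bullet case, where all sums converge absolutely by $u\in\mathcal{F}$, $\psi\in C_c(X)$ and summability of $b$.

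\textbf{Handling the two hypothesis regimes.} Under the first bullet ($u\in\mathcal{F}$, $\psi\in C_c(X)$) every series above is a finite sum up to the local-summability of $b$, so Green's formula applies directly and the manipulation is purely algebraic; here one actually obtains equality, but we only record the inequality. Under the second bullet, $\psi$ is merely bounded and $u\in D(H)$, so the first step is to make sense of $h(\psi u)$ at all: this is exactly what Lemma~\ref{l:Lipform} provides, giving $\psi u\in D(h)\cap\ell^2(X,wm)$. The plan is then to pick compactly supported $u_n\to u$ in the form norm of $h_+$ (hence of $h$, using $q\in V$), apply the already-established identity to each $(u_n,\psi)$ — or rather to $(u_n,\psi_n)$ with suitable cutoffs if $\psi$ itself needs truncating — and pass to the limit. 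The left side converges to $h(\psi u)$ by the Lemma; on the right, the term $\sum_X (\mathcal{H}u_n)(\psi^2 u_n)m = h(\psi^2 u_n, u_n)$ (by Lemma~\ref{l:Green}, second display) converges to $h(\psi^2 u, u)=\sum_X(\mathcal{H}u)(\psi^2u)m$ using boundedness of $\psi$ and the KLMN comparison $\varepsilon h_+ - C \le h \le h_+$; and the gradient term $\sum_X|\nabla_{|u_n|}\psi|^2 m \le \|u_n\|^2_{wm}$-type bound passes to the limit by Fatou/monotone convergence together with $|\nabla\psi|^2\le w$ and $u\in\ell^2(X,wm)$.

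\textbf{Main obstacle.} The delicate point is the limiting argument in the second case: one must ensure that all three terms converge simultaneously and that no mass escapes to infinity — in particular that $\sum_X|\nabla_{|u_n|}\psi|^2 m$ behaves well, since $|\nabla_{|u_n|}\psi|^2$ mixes $u_n$ and $\psi$ bilinearly. The bound $|\nabla\psi|^2\le w$ is precisely what controls this: it yields $\sum_X|\nabla_{|u_n|}\psi|^2m\le \sum_X u_n^2\, w\,m$ (up to the usual factor), which is finite and converges by $u\in\ell^2(X,wm)$; combined with the Hardy inequality $h\ge w$ this also keeps $h(\psi u_n)$ bounded. So the real work is bookkeeping: setting up the approximating sequences, invoking Lemma~\ref{l:Lipform} to land in $D(h)$, and using the KLMN estimates from the setup section to interchange limit and form. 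Everything else is the algebraic identity, which is routine given the Leibniz rule and Green's formula.
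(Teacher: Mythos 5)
Your core computation is the same as the paper's: expand $h(\psi u)$ via the discrete Leibniz rule, absorb the term $\tfrac12\sum_{x,y}b(x,y)u(x)u(y)(\psi(x)-\psi(y))^2$ into $\sum_X|\nabla_{|u|}\psi|^2m$, and convert $\sum_X\nabla u\cdot\nabla(\psi^2u)m+\sum_Xq\psi^2u^2m$ into $\sum_X(\mathcal H u)(\psi^2u)m$ by Green's formula; the convergence control via Lemma~\ref{l:Lipform}, the Hardy inequality and the bound $\sum_X|\nabla_{|u|}\psi|^2m\le\|u\|_{wm}^2$ is also exactly the paper's. Where you diverge is case (b): the paper does \emph{not} approximate $u$ by $u_n\in C_c(X)$ and pass to the limit in the inequality, but simply runs the algebraic identity directly for $u\in D(H)$ and justifies all rearrangements by absolute convergence of the sums. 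Your limiting route can be made to work, but as written it has two soft spots you should be aware of. First, for the term $\sum_X|\nabla_{|u_n|}\psi|^2m$ you need $\limsup_n\sum_X|\nabla_{|u_n|}\psi|^2m\le\sum_X|\nabla_{|u|}\psi|^2m$, i.e.\ an upper bound on the limit; Fatou's lemma gives the \emph{wrong} direction here, and monotone convergence does not apply since $(|u_n|)$ need not be monotone. What actually works is the continuity of $v\mapsto\tfrac12\sum_{x,y}b(x,y)|v(x)||v(y)|(\psi(x)-\psi(y))^2$ with respect to $\ell^2(X,wm)$-convergence (Cauchy--Schwarz plus $||v|-|v'||\le|v-v'|$), using that $u_n\to u$ in $\ell^2(X,wm)$ because $h\ge w$. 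Second, your side remark that the first bullet yields an equality is incorrect: replacing $u(x)u(y)$ by $|u(x)||u(y)|$ is a genuine inequality whenever $u$ changes sign across an edge on which $\psi$ is non-constant (the paper's displayed computation has $\le$ at exactly this step). Neither point invalidates the claimed inequality, but the direct absolute-convergence argument of the paper avoids the first issue entirely.
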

\begin{proof}
	By the Leibniz rule, we compute formally 
	\begin{align*}
	\sum_{X}|\nabla \psi u|^{2}m&=    \sum_{X}u\nabla\psi\cdot\nabla (\psi u)m+    \sum_{X}\psi\nabla u\cdot\nabla (\psi u)m\\
	&= \sum_X u \nabla \psi \cdot \nabla (\psi u) m - \sum_X \psi u \nabla \psi \cdot \nabla u \, m + \sum_X \nabla u \cdot \nabla(\psi^2 u) m \\
	&\leq {\sum_{X}|\nabla_{|u|}\psi|^{2}m }+\sum_{X}\nabla u\cdot \nabla(\psi^{2}u)m.
	\end{align*}
	Adding $ \sum q (\psi u)^{2}m $ on both sides, we obtain the statement after applying  Green's formula.
	
	Indeed, it is not hard to check by the Cauchy-Schwarz inequality that all sums above converge absolutely if  $ u\in \mathcal{F} $ and $ \ph\in C_{c}(X) $. Moreover, the Hardy inequality $ h\ge w $  on $ C_{c}(X) $ extends easily to $ D(h)\supseteq D(H) $ by approximation, so whenever  $u\in D(H)$ we have $ u\in \ell^{2}(X,wm) $. Furthermore, if $\psi$ is as assumed as in the second case then we find by the lemma above that $ u\psi,u\psi^{2}\in D(h) $. Since $|\nabla \psi|^2 \leq w$ by assumption, one gets 
	$ \sum_X |\nabla_{|u|}\psi|^{2} m\leq \|u\|_{wm}^2 $
	by the Cauchy-Schwarz inequality and with
	$ u\in \ell^{2}(X,wm) $
	one concludes that all terms converge absolutely.
\end{proof}

\subsection{Form bounds outside of compact sets}

The following technical lemma shows how we can extend a lower form bound outside of a compact set $ K $ to a lower form bound on the whole space by changing the form on a combinatorial neighborhood of $ K $.  
For a set $ U\subseteq X $ and a graph $ b $, we define the \emph{combinatorial neighborhood} of $ U $
by
$$   N(U):=U\cup\{y\in X\mid y\sim x\mbox{ for some }x\in U  \}. $$
\begin{lem}\label{l:positive}
	Assume $ h\ge 0$ on $ C_{c}(X\setminus K) $ for some compact set $ K\subseteq X $. Let
	\begin{align*}
		\lm_K=\inf_{\ph\in C_{c}(K),\|\ph\|=1}h(\ph)
	\end{align*} 
	and  $ b_K: X\to [0,\infty) $
	\begin{align*}
		b_{K}=\frac{1}{m}\left(1_{K}\sum_{y\in X\setminus K}b(\cdot,y) +1_{X\setminus K} \sum_{y\in K}b(\cdot,y)\right).
	\end{align*}
	Then, $ \chi=2b_{K}-\lambda_{K}1_{K} $ is supported on $ N(K) $ and $$  h+\chi\ge 0  \qquad \mbox{on }  C_{c}(X) . $$
\end{lem}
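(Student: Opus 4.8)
The plan is to split any $\ph\in C_c(X)$ into its part on $K$ and its part on $X\setminus K$, control the cross terms coming from edges between $K$ and $X\setminus K$, and absorb everything into the two pieces of $\chi$. First I would write $\ph=\ph_K+\ph_{K^c}$ with $\ph_K=1_K\ph\in C_c(K)$ and $\ph_{K^c}=1_{X\setminus K}\ph\in C_c(X\setminus K)$. By bilinearity, $h(\ph)=h(\ph_K)+2h(\ph_K,\ph_{K^c})+h(\ph_{K^c})$. The term $h(\ph_{K^c})$ is nonnegative by the hypothesis $h\ge 0$ on $C_c(X\setminus K)$, and the term $h(\ph_K)$ is bounded below by $\lm_K\|\ph_K\|^2=\lm_K\sum_X\ph_K^2 m=\sum_X\lm_K 1_K\ph^2 m$, which will be matched by the $-\lm_K 1_K$ contribution in $\chi$ (note $\ph_K^2=1_K\ph^2$).

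The core of the argument is the cross term. Using the form expression (with the potential contributing nothing since $\ph_K\ph_{K^c}\equiv 0$), one has
\begin{align*}
2h(\ph_K,\ph_{K^c})=\sum_{x,y\in X}b(x,y)\big(\ph_K(x)-\ph_K(y)\big)\big(\ph_{K^c}(x)-\ph_{K^c}(y)\big).
\end{align*}
The only edges $\{x,y\}$ contributing are those with exactly one endpoint in $K$; for such an edge, say $x\in K$, $y\in X\setminus K$, the product is $\ph_K(x)\cdot(-\ph_{K^c}(y))=-\ph(x)\ph(y)$, so the cross term equals $-2\sum_{x\in K,\,y\in X\setminus K}b(x,y)\ph(x)\ph(y)$. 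Estimating $2|\ph(x)\ph(y)|\le \ph(x)^2+\ph(y)^2$, I get
\begin{align*}
2h(\ph_K,\ph_{K^c})\ge -\sum_{x\in K}\ph(x)^2\sum_{y\in X\setminus K}b(x,y)-\sum_{y\in X\setminus K}\ph(y)^2\sum_{x\in K}b(x,y),
\end{align*}
and the right-hand side is exactly $-\sum_X 2 b_K\,\ph^2 m$ by the definition of $b_K$ (the factor $\tfrac12$ in the form is what produces the $2b_K$ rather than $b_K$). Combining the three estimates, $h(\ph)\ge \sum_X(\lm_K 1_K - 2b_K)\ph^2 m=-\chi(\ph)$, which is the claimed inequality $h+\chi\ge 0$ on $C_c(X)$. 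The support statement is immediate: $1_K$ is supported on $K\subseteq N(K)$, and $b_K(x)$ is nonzero only if $x$ has a neighbor on the opposite side of $K$, i.e. $x\in N(K)$.

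The only genuinely delicate point is bookkeeping of absolute convergence of the double sums so that the splitting $h(\ph)=h(\ph_K)+2h(\ph_K,\ph_{K^c})+h(\ph_{K^c})$ and the rearrangement of the cross term into the one-sided-edge sum are legitimate; this is harmless because $\ph$ has compact support and $\sum_{y}b(x,y)<\infty$ for each $x$, so every sum involved is finite. I expect no real obstacle beyond keeping track of the factor-of-two conventions between $h$, $b_K$, and $\chi$, and checking that $\lm_K$ is finite (which it is, as the infimum of a continuous function over the compact unit sphere of the finite-dimensional space $C_c(K)$).
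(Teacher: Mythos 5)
Your proof is correct and follows essentially the same route as the paper: decompose $\ph$ into $\ph 1_K+\ph 1_{X\setminus K}$, use $h\ge 0$ on $C_c(X\setminus K)$ and the definition of $\lm_K$ on the diagonal terms, and control the cross term by Young's inequality, matching it against $b_K$. One small bookkeeping remark: the right-hand side you obtain from Young's inequality is exactly $-\sum_X b_K\ph^2 m$, not $-\sum_X 2b_K\ph^2 m$, but since $b_K\ge 0$ this is stronger than what is needed and the conclusion $h+\chi\ge 0$ with $\chi=2b_K-\lm_K 1_K$ still follows (the paper's own estimate has the same factor-of-two slack).
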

\begin{proof}
	Clearly, $ b_{K} $ is supported on $ N(K) $ so $ \chi $ is supported on $ N(K) $. We have  
	\begin{align*}
	h(\varphi 1_K, \ph 1_{X \setminus K}) &= - \frac{1}{2} \sum_{x \in K} \sum_{y \in X \setminus K} b(x,y) \varphi(x) \varphi(y) \\
	& \quad - \frac{1}{2} \sum_{x \in X \setminus K} \sum_{y \in K} b(x,y) \varphi(x) \varphi(y),
	\end{align*}
	and by means of the Young inequality we obtain
	\begin{align*}
	h(\ph 1_{K},\ph1_{X\setminus K})& \geq -b_{K}(\ph)
	\end{align*}
	for all $ \ph\in C_{c}(X) $. For $ \ph\in C_{c}(X) $, we obtain since $ h(\ph 1_{X\setminus K})\ge 0 $
	\begin{align*}
	h(\ph)=h(\ph 1_{X\setminus K})+ 2h(\ph 1_{K},\ph 1_{X\setminus K})+ h(\ph 1_{ K})\ge  -2b_{K}(\ph) + \lm_{K}1_{K}(\ph),
	\end{align*}
	which finishes the proof.
\end{proof}
This auxiliary lemma is the place where the assumption of local finiteness which is invoked later enters the scene. Indeed, $ N(K) $ is only guaranteed to be compact for compact $ K $ if the graph is locally finite. This then corresponds to the support of the function $ \chi $ above which is needed to be in $ C_{c}(X) $ for the considerations below.

\section{Approximable functions}\label{s:approx}
In this section we introduce the concept of approximable functions. Next to functions in the operator domain, these functions allow for the Agmon estimates we present later on.

We first define the concept of approximable  functions in Subsection~\ref{sec:approximable} and give a sufficient criterion in Proposition~\ref{p:growth}. With this in mind one can jump forward to the main results and their proofs in the next chapters. However, as the concept of approximable functions does not seem to be standard, we explore it further in this section and relate it to other more well-known concepts. With the help of intrinsic metrics, we give a  criterion for functions to be approximable which is of a more geometric nature and is in the spirit of Agmons original work. Moreover, we discuss that approximable functions $ u $ for which $ h(u) $ exists in a suitable sense are already in the form domain. Finally, we provide links to the existence of ground states and criticality.

\subsection{Approximable and weakly approximable functions}
\label{sec:approximable}

\begin{defi}[Approximable function] A function $u\in \mathcal{F}(X)$ is called \emph{approximable} if there is a sequence $(\ph_{n})$ of functions in $C_{c}(X)$ such that  $ 	0\leq \ph_{n}\nearrow 1 $ as $   n\to\infty $ and
	\begin{equation*}
 \lim_{n\to\infty}\sum_{X}u^2|\nabla \ph_{n}|^{2}m=0.
	\end{equation*}
Moreover, $u\in \mathcal{F}(X)$ is called \emph{weakly approximable} if there is $(\ph_{n})$  in $C_{c}(X)$ such that $0\leq \ph_{n}\nearrow 1$, $ n\to\infty $ and $$\lim_{n\to\infty}\sum_{X}|\nabla_{|u|} \ph_{n}|^{2}m=0.$$
We call $\ph_{n}$  \emph{(weak) approximating cut-off functions} for $u$.
\end{defi}
In the context of  local Dirichlet forms the condition on $ u $ to be approximable appears in \cite[Theorem~1.1, Condition III]{Ro18}. Our main results build only on the notion of approximable functions. So, for the proofs in the subsequent sections it is sufficient to observe that approximable functions are weakly approximable, see Lemma~\ref{l:AtoWA}.

However, these two different notions may be of independent interest as they are an example of how the discrete and the continuum setting differ. Indeed, in the continuum setting there is only one corresponding notion by the virtue of the chain rule. In the lemma we show the relation between the two notions in the graph setting. 

For the remainder of this subsection, we will investigate these notions more deeply. As mentioned above, the reader only interested in the Agmon estimates may safely skip this material and come back to it at some later point.

To this end we call a function $u$ \emph{non-oscillatory} if 
$$\sup_{x\sim y} \frac{|u(x)|}{|u(y)|}<\infty.$$

\begin{lemma}\label{l:AtoWA}
Approximable functions are weakly approximable and weakly approximable, non-oscillatory functions are approximable.
\end{lemma}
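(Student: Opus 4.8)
The plan is to establish the two implications separately. For the first implication, suppose $u \in \mathcal{F}(X)$ is approximable with approximating cut-off functions $(\ph_n)$, so $0 \le \ph_n \nearrow 1$ and $\sum_X u^2 |\nabla \ph_n|^2 m \to 0$. I want to show the same sequence $(\ph_n)$ works as weak approximating cut-off functions, i.e. $\sum_X |\nabla_{|u|}\ph_n|^2 m \to 0$. Writing out the definitions, this amounts to comparing $\frac{1}{2m(x)}\sum_y b(x,y)|u|(x)|u|(y)(\ph_n(x)-\ph_n(y))^2$ with $\frac{1}{2m(x)}\sum_y b(x,y)u(x)^2(\ph_n(x)-\ph_n(y))^2$. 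The key pointwise estimate is $|u|(x)|u|(y) \le \frac{1}{2}(u(x)^2 + u(y)^2)$, so by symmetrizing the sum (swapping the roles of $x$ and $y$, which is legitimate since $b$ is symmetric and all terms are nonnegative) one bounds $\sum_X |\nabla_{|u|}\ph_n|^2 m$ by $\sum_X u^2 |\nabla \ph_n|^2 m$ up to a constant, and the right-hand side tends to $0$ by hypothesis. This direction is essentially a one-line estimate once the symmetrization is set up correctly.

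For the converse, suppose $u$ is weakly approximable and non-oscillatory, with $C := \sup_{x\sim y} |u(x)|/|u(y)| < \infty$, and let $(\ph_n)$ be weak approximating cut-off functions, so $\sum_X |\nabla_{|u|}\ph_n|^2 m \to 0$. I want to show $\sum_X u^2 |\nabla \ph_n|^2 m \to 0$, again using the same sequence. Pointwise, I need to dominate $u(x)^2 (\ph_n(x)-\ph_n(y))^2$ by a multiple of $|u|(x)|u|(y)(\ph_n(x)-\ph_n(y))^2$. When $b(x,y) > 0$ (the only terms contributing), non-oscillation gives $|u(x)| \le C |u(y)|$, hence $u(x)^2 \le C |u(x)||u(y)|$. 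Therefore $\sum_X u^2|\nabla\ph_n|^2 m \le C \sum_X |\nabla_{|u|}\ph_n|^2 m \to 0$.

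I expect no serious obstacle here; the main point requiring care is the bookkeeping of the sums — ensuring absolute convergence so that the symmetrization $x \leftrightarrow y$ in the double sum is valid, and being careful that $|\nabla \ph_n|$ may a priori take the value $\infty$, though for $\ph_n \in C_c(X)$ and $u \in \mathcal{F}(X)$ one checks via Cauchy–Schwarz and the summability of $b$ that all quantities in sight are finite. The only mild subtlety is that $u$ could vanish at some vertices, in which case $|u(x)|/|u(y)|$ might be $0/\text{something}$ or one must interpret the non-oscillation condition appropriately; but in the estimate $u(x)^2 \le C|u(x)||u(y)|$ this causes no trouble since both sides vanish when $u(x) = 0$. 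I would present the argument by fixing $n$, manipulating the double sums $\sum_{x,y} b(x,y)(\cdots)$ directly rather than the per-vertex expressions, and invoking the two pointwise inequalities above.
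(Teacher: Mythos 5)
Your proof is correct and follows the same route as the paper's (one-line) proof: Young's inequality $|u(x)||u(y)|\le\tfrac12(u(x)^2+u(y)^2)$ plus symmetrization of the double sum for the first implication, and the pointwise bound $u(x)^2\le C|u(x)||u(y)|$ from non-oscillation for the second. Your extra care about vanishing of $u$ and absolute convergence is fine but not needed beyond what you say; no gaps.
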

\begin{proof} The first statement follows directly from Young's inequality. As for the second statement we observe that there is $C>0$ such that for all $\ph\in C_c(X)$ we have
	$|\nabla_{|u|}\ph|^2\leq Cu^2|\nabla \ph|^2.$ This finishes the proof.
\end{proof}

The monotonicity assumption on the (weak) approximating functions $\ph_n$ as well as $0\leq \ph_n\leq 1$ will be subsequently used later. However, it is not a restriction as it can be seen from the next lemma.

\begin{lemma}\label{l:mon} Let  $u\in \mathcal{F}(X)$ be a function such that there is a sequence $(\psi_{n})$ of functions in $C_{c}(X)$ such that
		$\psi_{n}\to 1$, $ n\to\infty $ and $$\lim_{n\to\infty}\sum_{X}u^2|\nabla \psi_{n}|^{2}m=0,\qquad\left(\mbox{respectively, }\lim_{n\to\infty}\sum_{X}|\nabla_{|u|} \psi_{n}|^{2}m=0\right).$$
then, $u$ 	is (weakly) approximable, i.e., there is a (weak) approximating sequence $(\ph_{n})$ of functions in $C_{c}(X)$ such that
$0\leq \ph_{n}\nearrow 1$, $ n\to\infty $ and $$\lim_{n\to\infty}\sum_{X}u^2|\nabla \ph_{n}|^{2}m=0,\qquad\left(\mbox{respectively, }\lim_{n\to\infty}\sum_{X}|\nabla_{|u|} \ph_{n}|^{2}m=0\right).$$
\end{lemma}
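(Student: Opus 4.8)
The goal is to upgrade a sequence $(\psi_n)$ with $\psi_n \to 1$ pointwise and $\sum_X u^2 |\nabla \psi_n|^2 m \to 0$ (respectively the weakly approximable version with $|\nabla_{|u|}\psi_n|^2$) to a sequence $(\ph_n)$ which is in addition monotone, $0 \le \ph_n \nearrow 1$, while keeping the energy condition. The natural device is truncation: replace $\psi_n$ by $\min\{1, \max\{0,\psi_n\}\} = (\psi_n \vee 0) \wedge 1$. Truncation does not increase the gradient terms, since for any $x,y$ one has $|((\psi_n \vee 0)\wedge 1)(x) - ((\psi_n \vee 0)\wedge 1)(y)| \le |\psi_n(x) - \psi_n(y)|$; this is a pointwise inequality in the summands defining both $|\nabla \cdot|^2$ and $|\nabla_{|u|}\cdot|^2$ (the latter carries the extra nonnegative weight $v(x)v(y)$ with $v = |u|$, which is harmless). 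Hence after truncation we have $0 \le \psi_n \le 1$, still $\psi_n \to 1$ pointwise, and the relevant energy still tends to $0$. So I would first reduce to the case $0 \le \psi_n \le 1$.

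\textbf{From pointwise convergence to monotone convergence.} The remaining issue is to get monotonicity $\ph_n \nearrow 1$ together with compact support. The idea is to take a rapidly converging subsequence and form running maxima. Pass to a subsequence (not relabeled) so that $\sum_X u^2 |\nabla \psi_n|^2 m \le 4^{-n}$ (resp.\ the weak version), and define $\ph_N := \max_{1 \le n \le N} \psi_n$. Then $0 \le \ph_N \le 1$, $\ph_N$ is nondecreasing in $N$, $\ph_N \in C_c(X)$ (a finite max of compactly supported functions), and $\ph_N \nearrow 1$ pointwise because each $\psi_n \to 1$ and $\ph_N \ge \psi_N$. It remains to control the energy of $\ph_N$. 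Here one uses that for a pointwise maximum of finitely many functions, $|\max_n \psi_n(x) - \max_n \psi_n(y)| \le \max_n |\psi_n(x) - \psi_n(y)| \le \big(\sum_n |\psi_n(x) - \psi_n(y)|^2\big)^{1/2}$, hence squaring,
\begin{align*}
(\ph_N(x) - \ph_N(y))^2 \le \sum_{n=1}^{N} (\psi_n(x) - \psi_n(y))^2,
\end{align*}
and therefore, summing against $\tfrac{1}{2m(x)} b(x,y)$ (and over $x$, with weight $u^2(x) m(x)$, resp.\ with the weight $v(x)v(y)$),
\begin{align*}
\sum_X u^2 |\nabla \ph_N|^2 m \le \sum_{n=1}^{N} \sum_X u^2 |\nabla \psi_n|^2 m \le \sum_{n=1}^{\infty} 4^{-n} = \tfrac{1}{3}.
\end{align*}
This is bounded but does not tend to $0$, so one more step is needed.

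\textbf{Getting the limit to be zero.} To make the energy vanish, combine the monotone subsequence trick with a diagonal/shift argument: for fixed $k$, apply the running-maximum construction to the tail $(\psi_n)_{n \ge k}$ (after the $4^{-n}$ summability reduction) to produce a monotone, compactly supported sequence $\ph_N^{(k)} := \max_{k \le n \le N} \psi_n$ with $\ph_N^{(k)} \nearrow 1$ as $N \to \infty$ and $\sum_X u^2 |\nabla \ph_N^{(k)}|^2 m \le \sum_{n \ge k} 4^{-n} = \tfrac{1}{3} 4^{-(k-1)} \to 0$ as $k \to \infty$. Now choose for each $k$ an index $N(k) \ge k$ large enough that $\ph_{N(k)}^{(k)}(x) \ge 1 - 1/k$ (say) on the ball of combinatorial radius $k$ around $o$ — possible since $\ph_N^{(k)} \nearrow 1$ pointwise on the countable set exhausted by balls — and set $\ph_k := \ph_{N(k)}^{(k)}$. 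Then $0 \le \ph_k \le 1$, $\ph_k \to 1$ pointwise, and $\sum_X u^2 |\nabla \ph_k|^2 m \to 0$. This sequence need not yet be monotone in $k$, but it satisfies the hypothesis of the statement with the additional bound $0 \le \ph_k \le 1$; applying the running-maximum argument one final time to $(\ph_k)$ (which already has vanishing energy, so the finite sums $\sum_{k} \sum_X u^2|\nabla\ph_k|^2 m$ over a rapidly thinned subsequence are summable) yields a genuinely monotone sequence $0 \le \Phi_N \nearrow 1$ with $\sum_X u^2 |\nabla \Phi_N|^2 m \to 0$. The respective (weakly approximable) case is identical, replacing $u^2|\nabla \psi_n|^2 m$ by $|\nabla_{|u|}\psi_n|^2 m$ throughout and noting the same two pointwise inequalities (truncation does not increase, and the square of a max of differences is bounded by the sum of squares) hold verbatim for $|\nabla_{|u|}\cdot|^2$ because the extra factor $v(x)v(y) \ge 0$ multiplies each summand.

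\textbf{Main obstacle.} The only genuinely delicate point is the bookkeeping that simultaneously secures (i) monotonicity, (ii) compact support, (iii) pointwise convergence to $1$, and (iv) vanishing energy — since the plain running-maximum gives a bounded but nonzero energy. The resolution above is the two-parameter trick (take maxima over tails $\{n \ge k\}$, then diagonalize in $k$). One must also verify at the very start that the hypothesis does not already assume $0 \le \psi_n \le 1$ — it does not — so the truncation step is genuinely needed and one must record that truncation preserves $\psi_n \to 1$ pointwise and does not increase either energy functional; both follow from the elementary inequality $|(s\vee 0)\wedge 1 - (t \vee 0) \wedge 1| \le |s - t|$ for all $s,t \in \R$. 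Everything else is routine.
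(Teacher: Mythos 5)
Your truncation step and the running-maximum inequality $(\max_n\psi_n(x)-\max_n\psi_n(y))^2\le\sum_n(\psi_n(x)-\psi_n(y))^2$ are both correct (and carry over to $|\nabla_{|u|}\cdot|^2$ as you say), but the final step of your argument has a genuine gap: it reproduces exactly the defect you yourself diagnosed two paragraphs earlier. After the tail-maxima/diagonalization you hold a sequence $(\ph_k)$ with $0\le\ph_k\le1$, $\ph_k\to1$ pointwise and vanishing energy, but not monotone; you then take running maxima of a thinned subsequence and claim the energy of $\Phi_N=\max_{k\le N}\ph_{j_k}$ tends to $0$ "because the energies are summable." Summability only gives the bound $\sum_X u^2|\nabla\Phi_N|^2m\le\sum_{k}e_{j_k}<\infty$, i.e.\ boundedness — which is precisely why you rejected the first running-maximum attempt. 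As written, the construction goes in a circle and never produces a monotone sequence with vanishing energy. The gap is closable: for each fixed pair $(x,y)$ the summand $b(x,y)u^2(x)(\Phi_N(x)-\Phi_N(y))^2$ tends to $0$ (since $\Phi_N\to1$ pointwise) and is dominated by $b(x,y)u^2(x)\sum_k(\ph_{j_k}(x)-\ph_{j_k}(y))^2$, whose total sum is $\sum_k e_{j_k}<\infty$; dominated convergence then gives $\sum_X u^2|\nabla\Phi_N|^2m\to0$. Note that once you invoke this, the entire tail-maxima and diagonalization detour becomes unnecessary — the very first running maximum of a $4^{-n}$-subsequence already works. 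A second, minor slip: without local finiteness the combinatorial balls need not be finite, so "$\ph^{(k)}_{N(k)}\ge1-1/k$ on the ball of radius $k$" is not achievable for a compactly supported function; replace the balls by any exhaustion of the countable set $X$ by finite sets.

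For comparison, the paper avoids the energy-accumulation problem altogether by a different device: it subtracts from each $\psi_n$ a small positive $\eta_n$ with the same support, so that $\ph_n=\psi_n-\eta_n$ is strictly below $1$ on its finite support and zero elsewhere, and then extracts a monotone subsequence directly (on the finite support of $\ph_{n_1}$ one has $\max\ph_{n_1}<1$, so pointwise convergence to $1$ yields a later index dominating it everywhere, and one iterates). Since the extracted functions are individual (perturbed) terms rather than maxima over many terms, their energies are controlled one at a time by Young's inequality and no domination argument is needed. Your route is viable but strictly more work, and as submitted it is incomplete at the decisive step.
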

\begin{proof}We only show the statement for  approximable functions. The weakly approximable case  follows analogously.

Without loss of generality we can assume that $0\le \psi_n\le 1$ since we have $|\nabla(0\vee\psi_n\wedge 1)|\leq|\nabla\psi_n|$.

To turn $(\psi_n)$ into a monotone sequence, we choose a sequence $(\eta_n)$ such that $\operatorname{supp} \psi_n = \operatorname{supp} \eta_n$, $0 <\eta_n \leq \psi_n$ on $\operatorname{supp} \psi_n$,  $\eta_n \to 0$ as $n \to \infty$, and $\sum_{X}u^2|\nabla \eta_n|^{2}m \to 0$ as $n \to \infty$ and we set 
$\ph_n:= \psi_n - \eta_n$. Then, $0\leq \ph_n<\psi_n\leq 1$ for all $n\in\N$. Now since $\psi_n\to 1$, $\eta_n\to0$, $\ph_n<\psi_n$ and all $\ph_n$ have finite support, we can extract a subsequence $(\ph_{n_k})$ that converges to $1$ montonically increasingly. Furthermore, 
\begin{align*}
\sum_{X}u^2|\nabla \ph_{n_k}|^{2}m\leq 2\sum_{X}u^2|\nabla \psi_{n_k}|^{2}m+2\sum_{X}u^2|\nabla \eta_{n_k}|^{2}m\to 0
\end{align*}
as $k\to\infty$.
This concludes the proof. 
\end{proof}

\subsection{Approximability and volume growth}

Next we give a criterion for a function to be  approximable with the help of  an intrinsic metric $d$. We denote the balls about a fixed vertex $ o\in X $ with radius $ r \geq 0 $ by
\begin{align*}
B_{r}=\{x\in\mid d(o,x)\leq r \}.
\end{align*}
We say that the balls are compact for $ d $
if $ B_{r} $ are finite for all $ r\ge0 $.

\begin{pro}\label{p:growth}If the intrinsic metric $ d $ has finite jump size $ s $ and compact balls of finite radius, then  every $u\in \mathcal{F}(X)$ with
	\begin{align*}
	\liminf_{n\to\infty}\sum_{B_{n+\eps+s}\setminus B_{n-s}}u^{2}m=0
	\end{align*}
	for some $ \eps>0 $ is approximable.
\end{pro}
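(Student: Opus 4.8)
The plan is to construct the approximating cut-off functions explicitly from the distance function $d(o,\cdot)$ and estimate their gradients using the intrinsic metric property. Since the balls $B_r$ are finite and $d$ has finite jump size $s$, the natural choice is a family of Lipschitz cut-offs that are equal to $1$ on a large ball and vanish outside a slightly larger ball, tuned so that the annulus on which the gradient is supported is exactly where $u$ has small $\ell^2$-mass along the given subsequence.

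First I would fix the $\eps>0$ from the hypothesis and, for each $n$ in the subsequence realizing $\liminf_{n\to\infty}\sum_{B_{n+\eps+s}\setminus B_{n-s}}u^2 m=0$, define
\begin{align*}
\ph_n(x) = \Bigl(1 \wedge \frac{1}{\eps}\bigl(n+\eps - d(o,x)\bigr)_+\Bigr),
\end{align*}
so that $\ph_n \equiv 1$ on $B_n$, $\ph_n \equiv 0$ outside $B_{n+\eps}$, and $\ph_n$ is $\tfrac{1}{\eps}$-Lipschitz with respect to $d$. Each $\ph_n$ has finite support because $B_{n+\eps}$ is finite, and $\ph_n \nearrow 1$ pointwise (after possibly passing to a monotone subsequence, which is harmless by Lemma~\ref{l:mon}). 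The key pointwise bound is $(\ph_n(x)-\ph_n(y))^2 \le \tfrac{1}{\eps^2} d(x,y)^2$, which combined with the intrinsic metric inequality $\sum_{y} b(x,y) d(x,y)^2 \le m(x)$ gives
\begin{align*}
|\nabla \ph_n|^2(x) = \frac{1}{2m(x)}\sum_{y\in X} b(x,y)(\ph_n(x)-\ph_n(y))^2 \le \frac{1}{2\eps^2}.
\end{align*}
Moreover $\ph_n(x)-\ph_n(y)=0$ unless at least one of $x,y$ lies in $B_{n+\eps}\setminus B_n$; since the jump size is $s$, if $x \in B_{n+\eps}\setminus B_n$ and $b(x,y)>0$ then $y \in B_{n+\eps+s}\setminus B_{n-s}$, and symmetrically, so $|\nabla\ph_n|^2$ is supported on $B_{n+\eps+s}\setminus B_{n-s}$.

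Putting these together,
\begin{align*}
\sum_{X} u^2 |\nabla \ph_n|^2 m \le \frac{1}{2\eps^2}\sum_{B_{n+\eps+s}\setminus B_{n-s}} u^2 m,
\end{align*}
which tends to $0$ along the chosen subsequence by hypothesis. Since $u\in\mathcal{F}(X)$ by assumption and the $\ph_n$ are compactly supported, this exhibits $u$ as approximable (after the harmless monotonization via Lemma~\ref{l:mon} to ensure $0\le\ph_n\nearrow 1$). I do not expect a serious obstacle here; the only mild technical points are (i) verifying that the truncated distance cut-off really is $\tfrac1\eps$-Lipschitz in $d$ even where the $\wedge 1$ and $(\cdot)_+$ clip — this follows because $t\mapsto 1\wedge \tfrac1\eps(c-t)_+$ is $\tfrac1\eps$-Lipschitz on $\R$ — and (ii) the bookkeeping with the jump size $s$ to pin down the support of $|\nabla\ph_n|$ inside the annulus $B_{n+\eps+s}\setminus B_{n-s}$, which is exactly why the hypothesis is stated with those shifted radii. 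Passing from the $\liminf$-subsequence to a genuine monotone approximating sequence is handled by Lemma~\ref{l:mon}.
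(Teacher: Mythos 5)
Your proof is correct and follows essentially the same route as the paper: the paper takes $\ph_n=(1-d(B_n,\cdot)/\eps)_+$, which is the same $\tfrac1\eps$-Lipschitz cut-off as your $1\wedge\tfrac1\eps(n+\eps-d(o,\cdot))_+$, locates the support of $|\nabla\ph_n|^2$ in $B_{n+\eps+s}\setminus B_{n-s}$ via the jump size, and bounds the sum by the intrinsic metric property before extracting a subsequence. The only differences are cosmetic (your constant $\tfrac1{2\eps^2}$ versus the paper's $\tfrac1\eps$, and your explicit appeal to Lemma~\ref{l:mon}).
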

	\begin{proof}Let $\ph_{n}=(1- d(B_{n},\cdot)/\eps)_{+}$ for $ \eps>0 $. Since $d$ has finite jump size $s$ and since the balls of finite radius are finite, we get $\ph_{n}\in  C_{c}(X)$, $0\leq\ph_{n}\nearrow 1$ pointwise and $ |\nabla \ph_{n} |^{2}$ is supported on $ B_{n+\eps+s}\setminus B_{n-s} $. Moreover, by the basic inequality $ ab\leq (a^{2}+b^{2})/2 $ and the intrinsic metric property we obtain
		\begin{align*}
		\sum_{X}u^2|\nabla \ph_{n}|^{2}m= \sum_{B_{n+\eps+s}\setminus B_{n-s}}u^{2}|\nabla \ph_{n}|^{2}m\leq \frac{1}{\eps}
		\sum_{B_{n+\eps+s}\setminus B_{n-s}}u^{2}m.
		\end{align*}
		Note that we used here that for $x,y$ with $b(x,y) > 0$, we have 
			\begin{align*}
			 & \eps \left| \left( 1 - \frac{d(B_n, x)}{\eps} \right)_{+} - \left( 1 - \frac{d(B_n, y)}{\eps} \right)_{+} \right| 
			  \,\, \leq \,\,  \left| d(B_n, x) - d(B_n, y)\right| \leq d(x,y). 
			\end{align*}					
		Thus, we can extract a subsequence of approximating cut-off functions for $u$ from $(\ph_{n})$.
	\end{proof}

	A sequence $(F_n)$ consisting of non-empty finite subsets of (the vertices of) a graph $b$ on $(X,m)$ is called a {\em F{\o}lner sequence} if 
	\[
	\lim_{n \to \infty} \frac{b\big(\partial{F_n}\big)}{m(F_n)} = 0,
	\] 
	where $ \partial A=A\times (X\setminus A) $ for $A \subseteq X$ and  we define
	$b(\partial A) :=\sum_{(x,y)\in \partial A} b(x,y)$. A F{\o}lner sequence is called {\em nested} if $F_n \subseteq F_{n+1}$ for all $n \in \N$. 
	The {\em vertex boundary of a set $A \subseteq X$} is defined as 
	\begin{align*}
	\partial_V A &:= \big\{ x,y\mid x\in A,y\in X\setminus A,  x\sim y  \big\}.
	\end{align*}
	
	\begin{pro} \label{p:folnerapprox}
		Suppose that $b$ is a graph on $(X,m)$ admitting a nested F{\o}lner sequence $(F_n)$ such that $\bigcup_n F_n = X$. Then every function $u \in C(X)$ for which there exist a constant $C > 0$ and $n_0 \in \N$ such that 
		\[
		u_{|\partial_V F_n} \leq C\cdot {m(F_n)}^{-1/2} \quad \mbox{ for all } n \geq n_0
		\] 
		is approximable. 
	\end{pro}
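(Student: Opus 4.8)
The strategy is to build explicit cut-off functions from the indicator functions of the Følner sets and verify the approximability condition directly, using Proposition~\ref{p:growth}'s philosophy but now with a combinatorial (Følner) cut-off rather than a metric one. Concretely, I would take $\ph_n := 1_{F_n}$ as a first attempt: these are compactly supported since each $F_n$ is finite, they satisfy $0 \le \ph_n \nearrow 1$ because the sequence is nested with $\bigcup_n F_n = X$, and I need only estimate $\sum_X u^2 |\nabla \ph_n|^2 m$.

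\textbf{Key steps.} First I would compute $|\nabla \ph_n|^2$ for $\ph_n = 1_{F_n}$. Since $\ph_n(x) - \ph_n(y)$ is nonzero exactly when the edge $\{x,y\}$ crosses the boundary, one gets
\begin{align*}
\sum_X u^2 |\nabla \ph_n|^2 m = \sum_{x \in X} u(x)^2 \cdot \tfrac12 \sum_{y \in X} b(x,y)(1_{F_n}(x) - 1_{F_n}(y))^2 = \tfrac12 \sum_{(x,y) \in \partial_V F_n, \, x \in F_n \text{ or } x \notin F_n} u(x)^2 b(x,y),
\end{align*}
i.e.\ the sum runs over the vertices $x$ incident to a boundary edge. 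Second, I would bound $u(x)^2$ on $\partial_V F_n$ by the hypothesis $u_{|\partial_V F_n} \le C m(F_n)^{-1/2}$; note $\partial_V F_n$ consists of all endpoints of edges between $F_n$ and its complement, so every $x$ appearing in the sum above lies in $\partial_V F_n$ provided $n \ge n_0$. This gives
\begin{align*}
\sum_X u^2 |\nabla \ph_n|^2 m \le \frac{C^2}{2\, m(F_n)} \sum_{(x,y) \in \partial F_n} b(x,y) = \frac{C^2}{2} \cdot \frac{b(\partial F_n)}{m(F_n)},
\end{align*}
which tends to $0$ by the Følner property. Third, since the hypothesis only gives a bound on $u$ over $\partial_V F_n$ (not a square-summability of $u$ itself), I would invoke Lemma~\ref{l:mon} if I had dropped monotonicity — but here $1_{F_n}$ is already monotone, so this is automatic; if instead one prefers the tent-type cut-offs $\ph_n = \max(1_{F_n}, \tfrac12 1_{F_{n+1}})$ or similar to ensure strict positivity on an exhausting family, Lemma~\ref{l:mon} lets one pass to a monotone subsequence. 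Finally, conclude that $u$ is approximable by the definition.

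\textbf{Main obstacle.} The delicate point is matching the index bookkeeping: the Følner ratio controls $b(\partial F_n)/m(F_n)$, where $\partial F_n = F_n \times (X \setminus F_n)$ counts ordered edge-pairs with the \emph{first} coordinate inside $F_n$, whereas in $|\nabla 1_{F_n}|^2$ a boundary edge $\{x,y\}$ contributes a term $u(x)^2$ for the endpoint \emph{inside} $F_n$ and also (from the $y$-sum at $y \notin F_n$) a term $u(y)^2$ for the endpoint \emph{outside}. One must therefore check that both endpoints of every boundary edge lie in $\partial_V F_n$ — which is exactly how $\partial_V$ is defined in the excerpt — so the uniform bound $C m(F_n)^{-1/2}$ applies to all relevant $u$-values, and that the symmetry of $b$ lets one rewrite the doubled sum over unordered boundary edges in terms of $b(\partial F_n)$ up to a harmless constant factor. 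Once this accounting is pinned down, everything else is the routine estimate above. I do not expect any analytic difficulty beyond this combinatorial care.
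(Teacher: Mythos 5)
Your proposal is correct and follows essentially the same route as the paper: take $\ph_n = 1_{F_n}$, observe that $|\nabla \ph_n|^2$ is supported on $\partial_V F_n$ (both endpoints of crossing edges), apply the pointwise bound $u^2 \le C^2 m(F_n)^{-1}$ there, and let the F{\o}lner property finish the job. The only discrepancy is the multiplicative constant in front of $b(\partial F_n)/m(F_n)$, which is immaterial since that ratio tends to zero.
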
 

	\begin{proof}
		Let $u$ be a function satisfying the boundedness condition given in the statement of the proposition. 
		For $n \geq n_0$ we define $\varphi_n := 1_{F_n}$. Since $(F_n)$ is nested and $\bigcup_{n} F_n = X$, we have $0 \leq \varphi_n \nearrow 1$ as $n \to \infty$.
		 Moreover, we observe that $ \varphi_n(x)-\ph_n(y) = 0$ if $x \notin \partial_V F_n$ and $y \sim x$.
		 Hence, for each $n  \geq n_0$ we get 
		\begin{align*}
			\sum_{X} u^2 |\nabla \varphi_n|^2 m &= \sum_{x \in \partial_V F_n} u^2 |\nabla \varphi_n|^2 m \leq C^2 \cdot {\frac{2}{m(F_n)}} \cdot b(\partial F_n).
			\end{align*} 
			The observation that $(F_n)$ is a F{\o}lner sequence concludes the proof.
		\end{proof}
	
	\begin{eg}
		Let $b$ be the  graph with standard weights on $\Z^d$ with the counting measure $m=1$. Precisely, for $x,y \in \Z^d$ we have $b(x,y) = 1$ if $\sum_{i=1}^d|x_i - y_i| = 1$ and $b(x,y) = 0$ otherwise. 
		For $n \in \N$, we write $F_n := \big\{ (x_i)_{i=1}^d \in \Z^d\mid |x_i| \leq n \mbox { for all } 1 \leq i \leq d \big\}$. We have $m(F_n) = \# F_n = (2n+1)^d$, where $\# A$ denotes the cardinality of a set $A$. It is readily checked that $\partial_V F_{n} \subseteq F_{n+1} \setminus  F_{n-1} $, and therefore,  
		\[
		b\big( \partial F_n \big) \leq \big( (2n+3)^d - (2n-1)^d \big).
		\]
		Hence, $F_n$ is a F{\o}lner sequence. 
		 Now, the above proposition shows that every function $u \in C(\Z^d)$ for which there is a constant $C > 0$ such that 
		 \[
		 u_{|F_{n+1} \setminus F_{n-1}} \leq C \, (2n+1)^{-d/2} \quad \mbox{ for all } n \in \N
		 \]
		 is approximable. Moreover, for $d \geq 3$, consider the function \[
		 G:\Z^d \to [0, \infty), \quad G(x) := \begin{cases}
		  \kappa |x|^{2 - d}, & x \neq 0 \\
		  1, & x = 0
		 \end{cases}
		 ,
		 \] 
		 where $\kappa > 0$ is a constant. Then $G$ is approximable if $d \geq 4$. Indeed, for $n \in \N$ and $x \in F_{n+1} \setminus F_{n-1}$, we have $n \leq |x| \leq \sqrt{d} \cdot (n+1)$. It follows that $G(x) \leq \kappa \cdot n^{2-d}$
		 for those $x$. For $d \geq 4$, this yields 
		 \[
		 G(x) \leq \kappa \cdot  n^{2-d} \leq \kappa \cdot n^{-d/2} \leq 4^{d/2}\kappa \cdot (4n)^{-d/2} \leq 4^{d/2} \kappa \cdot (2n + 1)^{-d/2}
		 \]
		 for all $n \geq 1$ and all $x \in \Z^d \setminus F_{n-1}$. By what we showed above the function $G$ is approximable. We point out that the asymptotic behavior of $G$ as $|x|\to \infty$ coincides with the asymptotic behavior of the Green function of the graph $b$ as $|x| \to \infty$, cf.\@ \cite{Uch98}. It follows from this that the Green function for $b$ is approximable as well if $d \geq 4$.  
		\end{eg}

\subsection{Approximable functions in the form domain}
We now show that functions $ u $, where we can explicitly give sense to $ h(u) $, are approximable if and only if they are in $ D(h) $.
\begin{lemma} Let  $ u \in\ell^2(X,m)$  be such that $ \sum_{X}(|\nabla u|^{2}+|q|u^{2})m<\infty $. Then, $ u $ is approximable if and only if $ u\in D(h) $. 
\end{lemma}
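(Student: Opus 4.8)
The plan is to prove both implications separately, using the approximating cut-off functions as the common tool. For the easy direction, suppose $u$ is approximable with approximating cut-off functions $(\ph_n)$, so $0 \le \ph_n \nearrow 1$ and $\sum_X u^2|\nabla\ph_n|^2 m \to 0$. I would show that $(\ph_n u)$ is a Cauchy sequence in the form norm of $h$. Since $\ph_n u \in C_c(X)$, the Leibniz rule gives $|\nabla(\ph_n u)| \le \ph_n|\nabla u| + (\text{terms controlled by } u|\nabla \ph_n|)$ pointwise — more carefully, one expands $\sum_X |\nabla(\ph_n u)|^2 m$ and estimates the cross terms by Cauchy--Schwarz against $\sum_X \ph_n^2 |\nabla u|^2 m \le \sum_X |\nabla u|^2 m < \infty$ and $\sum_X u^2|\nabla\ph_n|^2 m \to 0$. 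Combined with $|q| u^2 \in \ell^1$ and dominated convergence (the potential term $\sum_X q \ph_n^2 u^2 m \to \sum_X q u^2 m$), this shows $h(\ph_n u)$ is bounded and that $(\ph_n u - \ph_k u)$ has small form-norm for large $n,k$; since also $\ph_n u \to u$ in $\ell^2(X,m)$, closability of $h$ gives $u \in D(h)$.

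For the converse, suppose $u \in D(h)$. The natural approach is to use Lemma~\ref{l:Cacciopolli} (the Caccioppoli inequality, first case: $u \in \mathcal{F}$, $\psi \in C_c(X)$), which is legitimate here since $u \in \ell^2(X,m) \subseteq \mathcal{F}$ under our summability hypothesis. Applying it with $\psi = 1 - \ph_n$ for any fixed exhaustion $(\ph_n)$ with $0 \le \ph_n \nearrow 1$, $\ph_n \in C_c(X)$: the left side is $h((1-\ph_n)u)$, and I would want to show this, together with the known fact $u \in D(h)$, forces $\sum_X u^2|\nabla\ph_n|^2 m \to 0$ along a subsequence. The cleaner route, though, is probably to argue directly: since $u \in D(h)$ and also $\sum_X(|\nabla u|^2 + |q|u^2)m < \infty$, one checks $h(u)$ (computed via this explicit sum) agrees with the closed form value, and then the sequence $v_n := \ph_n u \in C_c(X)$ satisfies $v_n \to u$ in $\ell^2$ with $h(v_n) \to h(u)$; together with weak convergence in the form-Hilbert space this upgrades to norm convergence $v_n \to u$ in $D(h)$, hence $\sum_X u^2 |\nabla \ph_n|^2 m \to 0$ (extracting the cross terms out of $h(v_n - u) \to 0$). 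Then Lemma~\ref{l:mon} removes any worry about monotonicity, yielding genuine approximating cut-off functions.

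The main obstacle I anticipate is the converse direction: knowing $u \in D(h)$ abstractly does not immediately produce cut-off functions $\ph_n$ with $\sum_X u^2|\nabla\ph_n|^2 m \to 0$, because $D(h)$-convergence of \emph{some} sequence $(u_n) \subseteq C_c(X)$ to $u$ need not be realized by truncations of the form $\ph_n u$. The key point to get right is that, under the extra hypothesis $\sum_X(|\nabla u|^2 + |q|u^2)m < \infty$, the explicit quadratic expression equals the closed-form value $h(u)$ (this needs a short argument: approximate by a $D(h)$-Cauchy sequence $(u_n)$, pass to a pointwise convergent subsequence, and use Fatou plus the $\ell^1$-domination of the $q$-term to identify the limit), and then that $h(\ph_n u) \to h(u)$ by dominated convergence applied to each term of the sum. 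Once $h(\ph_n u) \to h(u) = \lim h(u_n)$ and $\ph_n u \to u$ in $\ell^2$, a standard Hilbert-space argument (weak convergence + convergence of norms $\Rightarrow$ strong convergence) in the completion of $C_c(X)$ under the form norm closes the gap, and reading off the cross term gives the claim. I would keep the potential-term bookkeeping in mind throughout, since $q$ need only lie in $V$, so one works with $h_+$ and the KLMN comparison $\eps h_+ - C \le h \le h_+$ where convenient.
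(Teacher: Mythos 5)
Your forward direction (approximable $\Rightarrow u\in D(h)$) is essentially the paper's argument: expand $h(\ph_k u-\ph_n u)$ via the Leibniz rule, control the gradient cross terms by $\sum_X u^2|\nabla\ph_n|^2m$ and the remainder by dominated convergence using $\sum_X(|\nabla u|^2+|q|u^2)m<\infty$, and conclude by closability. That half is fine.

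The converse direction has a genuine gap. You start from an \emph{arbitrary} exhaustion $0\le\ph_n\nearrow 1$ and claim $h(\ph_n u)\to h(u)$ ``by dominated convergence applied to each term of the sum.'' The potential term is indeed dominated, but the gradient term is not: expanding
\begin{align*}
\bigl(\ph_n(x)u(x)-\ph_n(y)u(y)\bigr)^2\le 2\ph_n(x)^2\bigl(u(x)-u(y)\bigr)^2+2u(y)^2\bigl(\ph_n(x)-\ph_n(y)\bigr)^2,
\end{align*}
the second piece is exactly (twice) the integrand of $\sum_X u^2|\nabla\ph_n|^2m$, and a uniform dominating function for it would require something like $u\in\ell^2(X,\deg\cdot m)$, which is not among your hypotheses. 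So the step is circular: the quantity you need to dominate is precisely the quantity whose vanishing you are trying to prove. Moreover, the conclusion you would reach --- that $\sum_X u^2|\nabla\ph_n|^2m\to0$ for \emph{every} exhaustion --- is false in general (take sharp indicator cut-offs $\ph_n=1_{F_n}$ on a graph of unbounded degree; the boundary term need not vanish even for $u\in D(h)$ with finite energy). Since your Hilbert-space upgrade (weak convergence plus convergence of norms implies strong convergence) genuinely needs $h_1(\ph_n u)\to h_1(u)$ and not just boundedness, the argument does not close. Your first sketched alternative, Caccioppoli with $\psi=1-\ph_n$, also does not apply as stated since $1-\ph_n\notin C_c(X)$.

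The missing ingredient is the fact --- which the paper cites from \cite[Lemma~3.2]{KePiPo1} and \cite[Lemma~6.6]{KLW_book} --- that for $u\in D(h)$ the form approximation can always be realized by functions of the special form $u_n=\ph_n u$ with $\ph_n\in C_c(X)$, $0\le\ph_n\le1$, $\ph_n\to1$, so that $h((1-\ph_n)u)\to0$ holds \emph{by construction} for these particular cut-offs. With that in hand, one expands $h((1-\ph_n)u)$ by the Leibniz rule and Young's inequality to obtain the lower bound
\begin{align*}
h((1-\ph_n)u)\ge(1-\eps)\sum_X u^2|\nabla\ph_n|^2m-C(\eps)\sum_X(1-\ph_n)^2\bigl(|\nabla u|^2+|q|u^2\bigr)m,
\end{align*}
and the last sum tends to zero by dominated convergence thanks to the finite-energy hypothesis; this yields $\sum_X u^2|\nabla\ph_n|^2m\to0$, which is your ``reading off the cross term'' step done non-circularly.
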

\begin{proof}Assume that  $ u $ is approximable.
	Let $ (\ph_{n}) $ be a sequence of approximating cut-off functions. Then, $ u_{n}=\ph_{n}u \in C_{c}(X)$ for all $n\in\N$ and for $k\ge n$ we have
	\begin{align*}
	h&(u_k-u_{n})=h(u(\ph_k-\ph_{n}))\\
	&\leq \sum_{X}u^2|\nabla(\ph_k-\ph_{n})|^{2}m+
	\sum_{X}(\ph_k-\ph_{n})^{2}(|\nabla u|^{2}+|q|u^{2})m
	\\&\leq
	2\sum_{X}u^2|\nabla\ph_k|^{2}m+2\sum_{X}u^2|\nabla\ph_n|^{2}m+
	\sum_{X}(1-\ph_{n})^{2}(|\nabla u|^{2}+|q|u^{2})m.
	\end{align*}
	The terms on the right hand side become arbitrarily small for large $k,n$ where use by Lebesgue's dominated convergence theorem for the third term. Moreover, $u_n\to u$ in $\ell^2(X,m)$ by monotone convergence.
	Hence, $(u_n) $ is an $h$-Cauchy-sequence and, therefore, $u \in D(h) $.
	
	On the other hand assume that $ u\in D(h) $. Then, there is $ (u_{n}) $ in $ C_{c}(X) $ such that $ h(u-u_{n})\to 0 $. We can write $ u_{n}=\ph_{n}u $ with $ \ph_{n}\in C_{c}(X) $ and can assume that $ 0\leq \ph_{n}\leq 1 $ and $ \ph_{n}\to 1 $, confer \cite[Lemma~3.2]{KePiPo1} or \cite[Lemma~6.6]{KLW_book}.
	Then, $ u-u_{n}=(1-\ph_{n})u$ and using the discrete Leibniz rule and Young's inequality, we obtain for some constant $ 0 <  \eps < 1  $ and some corresponding  $ C(\eps) \geq 1$
	\begin{align*}
	h&((1-\ph_{n})u)\\=& 	\sum_{X}\left( u^{2}|\nabla \ph_{n}|^2
	+(1-\ph_{n})^2|\nabla u|^{2}+q(1-\ph_{n})^{2}u^2\right)m\\
	& - \sum_{x,y\in X}b(x,y)u(x)(1-\ph_{n})(y) (u(x)-u(y))(\ph_{n}(x)-\ph_{n}(y)) \\
	\ge& (1-\eps)	\sum_{X} u^{2}|\nabla \ph_{n}|^2m
	-C(\eps)\sum_{X} \left(({1-\ph_{n}})^2|\nabla u|^{2}+|q|(1-\ph_{n})^{2}u^2\right)m.
\end{align*}
Hence, there is $ C>0 $ such that	
	\begin{align*}
		\sum_{X}u^{2}|\nabla \ph_{n}|^2m\leq C\left( h((1-\ph_{n})u)
		+\sum_{X}	(1-\ph_{n})^{2}\left(|\nabla u|^{2} +|q|u^2\right)m\right)\to 0
	\end{align*} 
 as $ n\to\infty $ where the convergence of the second term is ensured by Lebesgue's  theorem as  $ \ph_{n}\leq 1 $, $ \ph_{n}\to 1 $ and $ \sum_{X}(|\nabla u|^{2}+|q|u^{2})m<\infty $.	
\end{proof}

The following example shows that the finite energy assumption on  $u  $ is indeed necessary to conclude $ u\in D(h) $.
\begin{eg} Let $ X=\N $ and $ m(n)=1/n^2 $, $ n\in \N $. Furthermore, let $ b $ be symmetric such that $ b(k,k+1)=1/k $ and $ b(k,k')=0 $ for $ |k-k'|\ge 2 $. Then, the function $ u(k)=(-1)^k $ is in $ \ell^{2}(X,m) $ and approximable 
	with approximating cut-off functions $ \ph_{n}=1_{[1,n]\cap \N} $ but 
	$$  \sum_{k\in \N}|\nabla u|^2m=4\sum_{k\in\N}\frac{1}{k}=\infty. $$
	So, $ u $ cannot be in $ D(h) $.
\end{eg}

\subsection{Domination by solutions of minimal growth and criticality}
In this subsection we  show that functions which are dominated by a positive solution of minimal growth are weakly approximable. Specifically, a positive function $ v\in \mathcal{F}(X) $ is said to be a \emph{solution  of minimal growth at  infinity} if  $ \mathcal{H}v=0 $ on $ X\setminus K $  for some finite set  $ K\subseteq X $ and for all $ u\in \mathcal{F} $ such that $ \mathcal{H}u\ge0 $ on $ X\setminus L $ for some finite set $ K\subseteq L \subseteq X $ there is $ C> 0 $ such that 
\begin{align*}
Cv\leq u\qquad\mbox{ on }X\setminus L.
\end{align*}
A positive solution $v$ of minimal growth at infinity which satisfies $\mathcal{H}v=0$ on $X$ is called a ground state.

A  form $h$ is called {\em subcritical} if it admits a non-trivial Hardy weight, i.e. there is a non-trivial  $w \geq 0$ such that $h\ge w $ on $C_c(X)$. A positive form $h$ is called {\em critical} if it is not subcritical.  In the special case $ q=0 $ criticality  is referred to as recurrence.

\begin{lemma}
	If $v$ is a strictly positive solution of minimal growth at infinity  and if $|u| \leq v$ on $X \setminus K$ for some finite $ K\subseteq X $, then $u$ is weakly approximable. If $u$ is additionally non-oscillatory, then $u$ is approximable.  
\end{lemma}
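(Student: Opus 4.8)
The plan is to construct suitable weak approximating cut-off functions out of a positive solution $g$ of minimal growth, exploiting that $g$ dominates $v$ from below near infinity (up to a constant). First I would invoke the defining property of minimal growth: since $v$ itself satisfies $\mathcal{H}v = 0$ on $X \setminus K$, and in particular $\mathcal{H}v \geq 0$ there, any other positive solution of minimal growth dominates $v$ — but more usefully, $v$ being of minimal growth means $v$ is comparable near infinity to any $\mathcal{H}$-superharmonic function, so I will compare $v$ with a fixed exhausting family. Concretely, let $(K_n)$ be an exhaustion of $X$ by finite sets with $K \subseteq K_1$, and for each $n$ let $g_n \in \mathcal{F}$ solve the Dirichlet problem $\mathcal{H}g_n = 0$ on $X \setminus K_n$, $g_n = v$ on $K_n$, chosen so that $0 \leq g_n \leq v$ and $g_n \searrow 0$ pointwise on $X \setminus K$ (such a sequence exists precisely because $v$ is of minimal growth: any positive solution off a finite set that is bounded by $v$ on the finite set is bounded by $v$ everywhere off that set, and the pointwise infimum over $n$ must vanish off $K$, else it would be a positive solution of minimal growth, hence a nonzero multiple of $v$, contradicting $g_n \leq v$ with $g_n$ decreasing to something strictly smaller — this is the standard argument; recurrence/criticality is what forces the limit to be $0$). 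Then set $\ph_n := 1 - g_n/v$ on $X \setminus K$ and extend appropriately on $K$; these satisfy $0 \leq \ph_n \nearrow 1$.

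Next I would compute the weak energy $\sum_X |\nabla_{|u|}\ph_n|^2 m$ and bound it using the hypothesis $|u| \leq v$ on $X \setminus K$. Since $|u| \leq v$, we get $|\nabla_{|u|}\ph_n|^2(x) \leq |\nabla_v \ph_n|^2(x)$ for $x$ far enough out, which by the ground state transform (Proposition~\ref{p:GST}) applied with the solution $v$ — recall that for a solution, equality holds, so $\sum_X |\nabla_v \ph_n|^2 m = (h - 0)(v\ph_n) = h(v - g_n)$ on the relevant region — reduces the estimate to controlling $h(v - g_n) = h(v) - 2h(v, g_n) + h(g_n)$ restricted off $K$. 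Here $v - g_n = g_n$-type harmonic pieces combine: using Green's formula (Lemma~\ref{l:Green}) and that both $v$ and $g_n$ are $\mathcal{H}$-harmonic off the finite sets, the cross terms and the "energy at infinity" telescope, leaving something that goes to $0$ because $g_n \to 0$ and the Dirichlet energies are monotone. The cleanest route is probably: $\sum_X |\nabla_v \ph_n|^2 m$ equals (via GST with equality for solutions) the capacity-type quantity $\mathrm{cap}(K_n) \to 0$ as $n \to \infty$, which vanishes exactly by criticality/recurrence of the form $h$ off $K$ — this is the content of "solution of minimal growth at infinity" combined with the fact that in the subcritical-off-$K$ regime such $v$ exists, and the capacities to infinity vanish.

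For the final sentence ("if $u$ is additionally non-oscillatory, then $u$ is approximable"), I would simply invoke Lemma~\ref{l:AtoWA}: a weakly approximable, non-oscillatory function is approximable. So that part is immediate once weak approximability is established, and requires no extra work.

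The main obstacle I anticipate is making rigorous the step that $g_n \searrow 0$ pointwise off $K$ and, more delicately, that $\sum_X |\nabla_v \ph_n|^2 m \to 0$; this is essentially the assertion that the capacity of $K$ with respect to $h$ restricted to $X \setminus K$ (which is a subcritical form there, or at least admits $v$ as a global positive solution) to infinity is zero. One has to be careful because $h$ need not be critical on all of $X$ — only the statement about minimal growth off a finite set is available — so the argument should be localized: work with the form $h$ on $C_c(X \setminus K)$, note $v$ is a positive solution there (hence that form is critical, i.e. recurrent, by the Allegretto–Piepenbrink circle of ideas together with uniqueness of minimal-growth solutions), and then criticality gives a null sequence $\ph_n$ with $h(v\ph_n) \to 0$; transplanting via GST yields $\sum |\nabla_v \ph_n|^2 m \to 0$, and then $|u| \leq v$ transfers this to $\sum |\nabla_{|u|}\ph_n|^2 m \to 0$. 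Handling the finite "collar" around $K$ where $|u| \leq v$ may fail is routine since everything there is a finite sum and the $\ph_n$ can be taken constant (equal to a fixed cut-off) near $K$.
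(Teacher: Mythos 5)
The final step (a weakly approximable, non-oscillatory function is approximable, via Lemma~\ref{l:AtoWA}) and the overall strategy --- produce cut-offs $\ph_n\to 1$ with $\sum_X|\nabla_v\ph_n|^2m\to 0$ from a null sequence via the ground state transform, then transfer to $u$ using $|u|\le v$ --- agree with the paper. But there is a genuine gap in the middle of your argument. You assert that the form $h$ restricted to $C_c(X\setminus K)$ is critical because $v$ is a positive solution of minimal growth there. This is false in general: that restricted form is the form of the graph $b\vert_{(X\setminus K)\times(X\setminus K)}$ with the augmented potential $q+\frac{1}{m}\sum_{y\in K}b(\cdot,y)$, and $v\vert_{X\setminus K}$ is a \emph{strict} supersolution of the corresponding operator on the combinatorial collar of $K$ (the extra term $\frac{1}{m(x)}\sum_{y\in K}b(x,y)v(y)$ is positive there). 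A positive supersolution which is not a solution forces subcriticality on a connected component, so the ``capacity to infinity'' you want to send to $0$ need not vanish. Relatedly, your exhaustion construction is internally inconsistent: prescribing $g_n=v$ on $K_n$ with $K_n\nearrow X$ forces $g_n\to v$ pointwise, not $g_n\searrow 0$; and ``the limit would be a positive multiple of $v$, contradicting $g_n\le v$'' is not a contradiction when the multiple is $cv$ with $c<1$.

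The paper's route repairs exactly this point: since $\mathcal{H}v=0$ only off a finite set, one first adds a finitely supported potential $\chi$ so that $(\mathcal{H}+\chi)v=0$ on \emph{all} of $X$ (so $h+\chi\ge0$ by Allegretto--Piepenbrink), and then proves that $h+\chi$ is critical on $C_c(X)$. This criticality is the real content and is established by a maximality argument: if $h+\chi\ge w$ with $w\ge0$ non-trivial, take a positive supersolution $v'$ of $\mathcal{H}+\chi-w$, let $C$ be maximal with $Cv\le v'$ on $X$, and use minimal growth of $v$ a second time on $v''=v'-Cv$ to contradict maximality. Criticality then yields a null sequence $v_n\in C_c(X)$ with $0\le v_n\le v$ and $(h+\chi)(v_n)\to0$, and $\psi_n=v_n/v$ gives $\sum_X|\nabla_v\psi_n|^2m=(h+\chi)(v_n)\to0$ by the ground state transform with equality for solutions. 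To salvage your outline you must replace ``criticality of $h$ off $K$'' by ``criticality of $h+\chi$ on $X$'' and actually prove the latter from the minimal growth hypothesis.
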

\begin{proof}
Since $ v>0 $	and $ \mathcal{H}v $ is a solution outside a finite set, there is a finitely supported  function $ \chi $ such that $ (\mathcal{H}+\chi)v=0 $ on $ X $. Thus, $ h+\chi\ge0 $ on $ C_{c}(X) $ by the Allegretto-Piepenbrink theorem. Moreover, we show that $ h+\chi $ is critical,  i.e.\@ if $h + \chi \geq w$ for some $w \geq 0$, then we have necessarily $w=0$. 

\medskip
\noindent
\emph{Claim. $ h+\chi $ is critical.}

\medskip
\noindent 
\emph{Proof of the claim.} 
If there is a non-trivial $ w\ge0 $ such that $ h+\chi\ge w $ on $ C_{c}(X) $, then by the 
Allegretto-Piepenbrink theorem, there is a strictly positive supersolution $ v' $ to $ (\mathcal{H}+\chi- w)v' \ge 0 $. Since $ v $ is a solution of minimal growth at infinity there is $ C_{L}  $ such that $ C_{L} v\leq  v'$ on $ X\setminus L $ for some finite $ L\subseteq X $. As $ L $ is finite and $v$ is strictly positive, there is a maximal $ C $ such that $$ C v\le  v' \qquad \mbox{ on }  X . $$ Then,  $ v''=v'-Cv $ satisfies
\begin{align*}
(\mathcal{H}+\chi)v'' =(\mathcal{H}+\chi)v'- C(\mathcal{H}+\chi)v=(\mathcal{H}+\chi)v'\ge v'w\ge 0
\end{align*}
As $ w $ is non-trivial and $v'$ is strictly positive, $ v'' $ is not harmonic and therefore $ v''\neq0 $.
As $ v $ has minimal growth at infinity, we get by the same argument as above the existence of some $ C'>0 $ such that
 $$ C' v\le  v'' \qquad \mbox{ on }  X .$$
Thus, 
\begin{align*}
(C+C')v\leq Cv+v''=v'.
\end{align*}
This contradicts the maximality of $C  $. Thus, $ w=0 $ and this proves the claim.

 So $h + \chi$ is critical and $v$ is a strictly positive solution to $(H + \chi) u = 0$. Hence, $v$ is the ground state for the form $h + \chi$. 
 By \cite[Theorem~5.3~(iv')]{KePiPo1}, there is a sequence $(v_n)$ in $C_c(X)$ such that $0 \leq v_n \leq v$ such that $ v_{n} \to v $  pointwise as $ n\to\infty $ and $ (h+\chi)(v_n) \to 0 $, i.e.\@ $(v_n)$ is a null-sequence for $h + \chi$.
 Define $\psi_n := v_n / v$ for $n \in \N$.  
 By $ |u|\leq v $ and the ground state transform of $ h+\chi $ we obtain
 \begin{align*}
 0\leq \sum_{X}|\nabla_{|u|} \psi_{n}|^{2}\leq \sum_{X}|\nabla_{v} \psi_{n}|^{2} 
 =
 (h+\chi)(\psi_{n}v) = (h+\chi)(v_n)  \to 0
 \end{align*}
 as $ n\to\infty $. Clearly, $\psi_n=v_n/v\to 1$ as $n\to\infty$ and, therefore, there exists an weak approximating sequence $(\ph_n)$ by Lemma~\ref{l:mon}
 above. Thus, $u$ is weakly approximable. If $u$ is also non-oscillatory, then it is approximable by Lemma~\ref{l:AtoWA}.
\end{proof}

Next, we discuss the connection of the existence of approximable functions with a concept called criticality. Recall that whenever a form $h$ is critical, then every  positive superharmonic function is a multiple of a unique strictly positive harmonic function, cf.\@ \cite[Theorem~5.3]{KePiPo1}. 

\medskip

The connection between existence of weakly approximable positive subharmonic functions and criticality is established next.

\begin{pro}
	Suppose that the graph is connected. Assume that
 $ h\ge0  $ and that there exists a non-trivial  positive subharmonic function $u$ that is weakly approximable. Then,  $ h $ is critical.
\end{pro}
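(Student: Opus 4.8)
The plan is to show that if $h$ were subcritical, then it would admit a strictly positive Green's function / superharmonic function which is ``incompatible'' with the existence of a weakly approximable positive subharmonic $u$. More precisely, I would argue by contradiction: assume $h$ is subcritical, so there is a non-trivial Hardy weight $w \geq 0$ with $h \geq w$ on $C_c(X)$. The goal is to produce a contradiction by testing the Caccioppoli/ground-state-type inequality along the weak approximating cut-off functions $\ph_n$ for $u$ and showing that the positive mass $\sum_X u^2 w\, m$ (which is positive once we know $u>0$ and $w$ is non-trivial, by connectedness) must vanish.

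The key steps, in order. First I would record that since $u$ is a positive \emph{subharmonic} function, $\mathcal{H}u \leq 0$, so $u$ is a positive supersolution to $\lambda = 0$; by the Allegretto--Piepenbrink theorem (Proposition~\ref{p:AP}(a), applied with $\lambda = 0 \leq \lambda_0(H)$ — consistent with $h \geq 0$) this is the right regime. Second, I would feed the weak approximating cut-off functions $\ph_n$ for $u$ (with $0 \leq \ph_n \nearrow 1$ and $\sum_X |\nabla_{|u|}\ph_n|^2 m \to 0$) into the ground state transform, Proposition~\ref{p:GST}, using $v = u$ (which is in $\mathcal{F}$, positive, and a supersolution to $0$): this gives
\begin{align*}
\sum_X |\nabla_u \ph_n|^2 m \leq h(u\ph_n).
\end{align*}
On the other hand, $h \geq w$ yields $h(u\ph_n) \geq \sum_X (u\ph_n)^2 w\, m$. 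Wait — this is the wrong direction; the ground state transform bounds $h(u\ph_n)$ from \emph{below}, not above. So instead I would combine it with the Caccioppoli inequality (Proposition~\ref{l:Cacciopolli}, first case: $u \in \mathcal{F}$, $\ph_n \in C_c(X)$), which gives
\begin{align*}
h(\ph_n u) \leq \sum_X |\nabla_{|u|}\ph_n|^2 m + \sum_X (\mathcal{H}u)(\ph_n^2 u)\, m.
\end{align*}
Since $u$ is subharmonic ($\mathcal{H}u \leq 0$) and $\ph_n^2 u \geq 0$, the second term is $\leq 0$, so $h(\ph_n u) \leq \sum_X |\nabla_{|u|}\ph_n|^2 m \to 0$. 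Combining with $h(\ph_n u) \geq \sum_X \ph_n^2 u^2 w\, m$ and letting $n \to \infty$ via monotone convergence ($\ph_n^2 \nearrow 1$), we get $\sum_X u^2 w\, m \leq 0$, hence $\sum_X u^2 w\, m = 0$.

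Third, I would invoke connectedness: since $u$ is a positive subharmonic function and the graph is connected, either $u$ is strictly positive everywhere, or — if $u$ vanishes somewhere — one shows $u \equiv 0$ (a minimum principle argument for supersolutions, or simply: at a zero of $u$ the subharmonicity forces $u$ to vanish at all neighbors). As $u$ is non-trivial, $u > 0$ on all of $X$. Therefore $\sum_X u^2 w\, m = 0$ forces $w \equiv 0$, contradicting the non-triviality of $w$. Hence $h$ is not subcritical, i.e. $h$ is critical.

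The main obstacle I anticipate is getting the direction of the inequalities exactly right and ensuring all the absolute-convergence hypotheses for Green's formula / the Caccioppoli inequality are genuinely met with $v = u \in \mathcal{F}$ and $\ph_n \in C_c(X)$ — these are precisely the hypotheses of the first case of Proposition~\ref{l:Cacciopolli}, so this should go through, but one must be careful that the term $\sum_X (\mathcal{H}u)(\ph_n^2 u) m$ is legitimately non-positive term by term (it is, since $\mathcal{H}u \le 0$ pointwise and $\ph_n^2 u \ge 0$ pointwise, and the sum converges absolutely because $\ph_n$ has compact support). A secondary point is justifying the passage to the limit $\sum_X \ph_n^2 u^2 w\, m \to \sum_X u^2 w\, m$, which is just monotone convergence since $0 \leq \ph_n^2 \nearrow 1$ and $u^2 w \geq 0$.
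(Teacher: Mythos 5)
Your computation up to the identity $\sum_X u^2 w\,m=0$ is correct (Caccioppoli with $u\in\mathcal{F}$, $\ph_n\in C_c(X)$, plus $\mathcal{H}u\le 0$, plus the Hardy inequality applied to $\ph_n u\in C_c(X)$, plus monotone convergence), but the final step contains a genuine gap: it is \emph{not} true that a non-trivial positive subharmonic function on a connected graph is strictly positive. The minimum principle you invoke works for \emph{super}solutions: if $v\ge 0$, $\mathcal{H}v(x)\ge 0$ and $v(x)=0$, then $-\sum_y b(x,y)v(y)\ge 0$ forces $v\equiv 0$ on the neighbors. For a \emph{sub}solution the same computation at a zero only gives $\sum_y b(x,y)u(y)\ge 0$, which is automatic. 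Concretely, on $\Z$ with standard weights the function $u(n)=n\vee 0$ is positive, subharmonic, non-trivial, and vanishes on a half-line. Hence your conclusion $\sum_X u^2w\,m=0$ only yields $w=0$ on $\{u>0\}$, and no contradiction with the non-triviality of $w$ is reached; the argument as written does not close.

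The paper avoids this by inserting one more ingredient: since $h\ge 0$, the Allegretto--Piepenbrink theorem provides a \emph{strictly} positive superharmonic function $v$, and the ground state transform sandwiches
$0\le \sum_X |\nabla_v(\ph_n u v^{-1})|^2 m\le h(\ph_n u)\le \sum_X|\nabla_{|u|}\ph_n|^2m\to 0$.
Fatou then gives $\nabla(u/v)=0$ along every edge, so $u/v$ is constant by connectedness; in particular $u=cv$ with $c>0$ is strictly positive and harmonic, and every positive superharmonic function is a multiple of $u$, which is the characterization of criticality from \cite[Theorem~5.3]{KePiPo1}. Note that your own estimate already shows $h(\ph_n u)\to 0$ with $0\le \ph_n u\nearrow u\not\equiv 0$, i.e.\@ $(\ph_n u)$ is a null sequence for $h$; invoking the null-sequence characterization of criticality from the same reference would be a legitimate alternative repair, but it is not the argument you gave, and the step ``$u>0$ everywhere'' cannot be salvaged as stated.
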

\begin{proof} Since $ h\ge0 $ there exists a strictly positive superharmonic function $ v $ {by the Allegretto-Piepenbrink theorem, Proposition~\ref{p:AP}}. Furthermore,
let $(\ph_{n})$ be a sequence of weak approximating cut-off functions for a positive subharmonic function $ u $.
By the ground state transform, Proposition~\ref{p:GST}, the Caccioppoli type inequality, Lemma~\ref{l:Cacciopolli},  and the assumptions on $u$ we arrive at
\begin{align*}
0\leq \sum_X \big| \nabla_v(\ph_{n}uv^{-1}) \big|^2 m &{\leq}\,  h(\ph_{n}u) \leq {\sum_{X}|\nabla_{|u|}\ph_n|^{2}m } + \sum_{X}(\mathcal{H}u)(\ph_{n}^{2}u)m \\
&\leq {\sum_{X}|\nabla_{|u|}\ph_n|^{2}m }.
\end{align*}
Since $(\ph_{n})$ is a sequence of weak approximating cut-off functions we get  by  Fatou's lemma 
\begin{align*}
0 &\leq \sum_X \big| \nabla_v (uv^{-1}) \big|^2m  \leq\liminf_{n\to\infty}  \sum_X \big| \nabla_v(\varphi_n u v^{-1}) \big|^2 m    \\
&{\leq}\liminf_{n\to\infty} {\sum_{X}|\nabla_{|u|}\ph_n|^{2}m }=0,
\end{align*}
which implies that $(u/v)(x)=(u/v)(y)$ for all $x\sim y$.
Thus, $u/v=\mathrm{const}$ since we assumed that the graph is connected. Thus, $ u $ is harmonic and every positive superharmonic function $ v $ is a linear multiple of the non-trivial function $ u$.	
\end{proof}

\section{Rellich inequalities and Agmon estimates}
\label{s:Rellich}
The general Agmon estimates which we prove in this paper are based on the following estimate. A version in the  continuum goes back to Agmon \cite{Ag82}.

\begin{thm}[Rellich inequality]\label{p:main} 
	Assume $h\ge w$ on $C_{c}(X)$ with $ w\ge  0 $.
	Let $g$ be a positive bounded function that satisfies an eikonal inequality, i.e.\@
	there is a constant $ 0<\gamma<1 $ such that 
	\begin{align*}
		|\nabla g^{1/2}|^{2}\leq \gamma g w .
	\end{align*}
	Then, for every   
	\begin{itemize}
		\item[(a)] approximable (positive sub-)solution $ u\in \mathcal{F} $
		\item[(b)] solution $ u\in D(H) $ whenever $ q\in V $
	\end{itemize}
	of $ Hu=f $ for some $ f\in C(X)$ which is supported in $ \supp w $, we have
	\begin{align*}
		(1-\gamma)^{2}	\sum_{X}|u|^{2}g w m\leq \sum_{X}|f|^{2}gw^{-1}m.
	\end{align*}
\end{thm}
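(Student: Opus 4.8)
The plan is to test the equation $\mathcal Hu=f$ against $\psi^{2}u$ with $\psi=g^{1/2}$ --- regularized by an approximating cut-off in case (a) --- and feed the outcome into the Hardy inequality $h\ge w$, closing the resulting estimate by the Cauchy--Schwarz inequality (and, in case (a), a self-improvement step). Before starting I would replace $g$ by $g/\|g\|_{\infty}$, which preserves the eikonal inequality with the same $\gamma$ and scales both sides of the asserted inequality by the same factor, so that we may assume $0\le g\le 1$; in particular then $|\nabla g^{1/2}|^{2}\le\gamma g w\le w$. I would also discard the trivial case $\sum_{X}|f|^{2}gw^{-1}m=\infty$.

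In case (b), where $q\in V$ and $u\in D(H)$, the function $\psi=g^{1/2}$ is bounded with $|\nabla\psi|^{2}\le w$, so the Caccioppoli type inequality (Proposition~\ref{l:Cacciopolli}, second case) bounds $h(g^{1/2}u)$ by $\sum_{X}|\nabla_{|u|}g^{1/2}|^{2}m+\sum_{X}(\mathcal Hu)(gu)m$. On the left I would use that the Hardy inequality passes from $C_{c}(X)$ to $D(h)$ by approximation, giving $h(g^{1/2}u)\ge\sum_{X}gu^{2}wm$. On the right I would bound the first term by $\sum_{X}|\nabla_{|u|}g^{1/2}|^{2}m\le\sum_{X}u^{2}|\nabla g^{1/2}|^{2}m\le\gamma\sum_{X}gu^{2}wm$ (first step: the routine Young-and-symmetry estimate; second step: the eikonal inequality) and the second term, using $\mathcal Hu=f$, $\supp f\subseteq\supp w$ and Cauchy--Schwarz, by $\big(\sum_{X}|f|^{2}gw^{-1}m\big)^{1/2}\big(\sum_{X}gu^{2}wm\big)^{1/2}$. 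Since $u\in\ell^{2}(X,wm)$ and $g$ is bounded, $\sum_{X}gu^{2}wm<\infty$, and the three estimates combine to $(1-\gamma)\sum_{X}gu^{2}wm\le\big(\sum_{X}|f|^{2}gw^{-1}m\big)^{1/2}\big(\sum_{X}gu^{2}wm\big)^{1/2}$, which is the claim after squaring.

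In case (a), where $u\in\mathcal F$ is approximable, I would run the same scheme with $\psi$ replaced by $\psi_{n}:=g^{1/2}\varphi_{n}\in C_{c}(X)$ for approximating cut-off functions $(\varphi_{n})$ for $u$. Writing $a_{n}:=\sum_{X}g\varphi_{n}^{2}u^{2}wm<\infty$, Proposition~\ref{l:Cacciopolli} (first case) and the Hardy inequality on $C_{c}(X)$ give $a_{n}\le h(\psi_{n}u)\le\sum_{X}|\nabla_{|u|}\psi_{n}|^{2}m+\sum_{X}(\mathcal Hu)(\psi_{n}^{2}u)m$. The crucial estimate is the pointwise bound, from the Leibniz rule and Young's inequality, $|\nabla\psi_{n}|^{2}\le(1+\delta)\varphi_{n}^{2}|\nabla g^{1/2}|^{2}+C_{\delta}|\nabla\varphi_{n}|^{2}\le(1+\delta)\gamma\,\varphi_{n}^{2}gw+C_{\delta}|\nabla\varphi_{n}|^{2}$; integrating against $u^{2}m$ and using $\sum_{X}|\nabla_{|u|}\psi_{n}|^{2}m\le\sum_{X}u^{2}|\nabla\psi_{n}|^{2}m$, the gradient term is at most $(1+\delta)\gamma\,a_{n}+C_{\delta}\varepsilon_{n}$ with $\varepsilon_{n}:=\sum_{X}u^{2}|\nabla\varphi_{n}|^{2}m\to 0$. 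Bounding the source term as in case (b) --- for a positive subsolution one replaces $=$ by $\le$, the sign being preserved since $\psi_{n}^{2}u\ge 0$ --- and choosing $\delta$ so small that $(1+\delta)\gamma<1$, I obtain $(1-(1+\delta)\gamma)a_{n}\le C_{\delta}\varepsilon_{n}+B^{1/2}a_{n}^{1/2}$ with $B:=\sum_{X}|f|^{2}gw^{-1}m$; since $\varepsilon_{n}\to 0$ this quadratic inequality bounds $a_{n}$ uniformly, so letting $n\to\infty$ (with $a_{n}\nearrow\sum_{X}gu^{2}wm$ by monotone convergence) and then $\delta\to 0$ yields the assertion.

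I expect the main obstacle to be precisely the danger of circularity in case (a): one cannot assume a priori that $\sum_{X}gu^{2}wm<\infty$, and applying the eikonal inequality directly to the mixed quantity $|\nabla_{|u|}\psi_{n}|^{2}$ would resurrect the full --- possibly infinite --- sum on the right-hand side. The resolution is to first dominate $\sum_{X}|\nabla_{|u|}\psi_{n}|^{2}m$ by $\sum_{X}u^{2}|\nabla\psi_{n}|^{2}m$ and only then invoke the eikonal inequality, which, being a genuinely pointwise ``single vertex'' bound, is localized by the cut-off factor $\varphi_{n}^{2}$ down to the finite quantity $a_{n}$; the leftover terms carry $|\nabla\varphi_{n}|^{2}$ and vanish by approximability. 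Two minor points to keep track of: the sharp constant $(1-\gamma)^{2}$ forces the final $\delta\to 0$ limit, and the Cauchy--Schwarz step relies on $f$ being supported in $\supp w$.
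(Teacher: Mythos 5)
Your proposal is correct and follows essentially the same route as the paper's proof: testing with $\psi_n=\varphi_n g^{1/2}$ (resp.\ $g^{1/2}$ in case (b)), combining the Hardy and Caccioppoli inequalities, expanding $|\nabla\psi_n|^2$ via the Leibniz rule with a weighted Young inequality, invoking the eikonal bound and Cauchy--Schwarz on the source term, and passing to the limit. Your parameter $\delta\to 0$ is exactly the paper's $N\to\infty$ (with $N/(N-1)=1+\delta$), and your normalization of $g$ merely replaces the paper's constant $C=\|g\|_\infty$; your explicit resolution of the quadratic inequality in $a_n^{1/2}$ is a slightly cleaner way of closing the estimate than the paper's division step, but the argument is the same.
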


\begin{rem}
	Note that if $g$ is zero at one point, then by the eikonal inequality $g$ must vanish on the entire  connected component containing that point. In this situation, the statements of the theorem become trivial on the connected component.  
\end{rem}

\begin{proof} We first give the proof under the assumption (a) and explain how this has to be modified under assumption (b) afterwards.
	
(a) Let $ (\ph_{n}) $  be a sequence of approximating functions for  $ u $.
For $n>0$, define
\begin{align*}
 \psi_{n}=\ph_{n}g^{1/2}.
\end{align*}
Then,  the given Hardy inequality and by the Caccioppoli type inequality, Lemma~\ref{l:Cacciopolli}, yield
\begin{align*}
\sum_{X}\psi^{2}_{n}u^{2}wm
\leq& h(\psi_{n} u)\\
\leq& \sum_{X}(\mathcal{H} u) (\psi^{2}_{n}u)m +\sum_{X}|\nabla_{|u|}\psi_{n}|^{2}m\\
\leq &\sum_{X}f\psi^{2}_{n}um+\sum_{X}u^2|\nabla\psi_{n}|^{2}m,
\end{align*}
where the last inequality follows as $u$ is a (positive sub-)solution and Young's inequality applied to the second term.
We obtain by the Cauchy-Schwarz inequality as $ w>0 $ on  $ \supp f  $
\begin{align*}
\ldots\leq&\Big(\sum_{X}u^{2}\psi^{2}_{n} w m\Big)^{\frac{1}{2}} \Big(\sum_{X}f^{2}\psi^{2}_{n}w^{-1}m\Big)^{\frac{1}{2}} +\sum_{X}u^2|\nabla \psi_{n}|^{2}m.
\end{align*}
To finish the proof, we continue to estimate the last term on the right hand side. We use the Leibniz rule to expand $ |\nabla \psi_{n}|^{2}=|\nabla(\ph_{n}g^{1/2})|^{2} $ with
$0\leq \ph_{n}\leq 1 $, $ |g|\leq  C $ for some $ C>0 $ and   Young's inequality $(a+b)^{2}\leq (\frac{N}{N-1})a^{2}+Nb^{2}$, $ N> 0$ and obtain 
\begin{align*}
{\sum_{X} u^2 |\nabla\psi_{n}|^{2}m}
\leq& \frac{N}{N-1}\sum_{X}u^{2}\ph_{n}^{2}|\nabla g^{1/2}|^{2}m+
N C\sum_{X}u^2|\nabla \ph_{n}|^{2}m
\\
 \leq& \gamma\frac{N}{N-1}\sum_{X}u^{2}\psi^{2}_{n} wm+NC\sum_{X}u^{2}|\nabla \ph_{n}|^{2}m,
\end{align*}
where we used the eikonal inequality.
Putting the last two estimates together, we arrive at
\begin{align*}
\sum_{X}u^{2}\psi^{2}_{n}wm\leq& \Big(\sum_{X}u^{2}\psi^{2}_{n}w m\Big)^{\frac{1}{2}} \Big(\sum_{X}f^{2}\psi^{2}_{n}w^{-1}m\Big)^{\frac{1}{2}}\\
&+ \gamma\frac{N}{N-1}\sum_{X}u^{2}\psi^2_{n} wm+
NC\sum_{X} u^2|\nabla \ph_{n}|^{2}m.
\end{align*}
Now, observe that the term $ \sum_{X}u^{2}\psi^{2}_{n}wm $ is finite as $ \psi_{n}=g\ph_{n} $ and $ \ph_{n} \in C_{c}(X)$. So, rearranging the inequality yields with $ c_{N}=(1-\gamma N/(N-1)) $
\begin{align*}
c_{N}	\left(\sum_{X}u^{2}\psi^{2}_{n}wm\right)^{\frac{1}{2}}\leq&  \Big(\sum_{X}f^{2}\psi^{2}_{n}w^{-1}m\Big)^{\frac{1}{2}}
	+ 
	NC\frac{\sum_{X} u^2|\nabla \ph_{n}|^{2}m}{\left(\sum_{X}u^{2}\psi^{2}_{n}wm\right)^{1/2}}.
\end{align*}
We first take the limit as $ n\to \infty $ and use monotone convergence and the approximibility with  $ \ph_{n} $. Finally,  taking the limit  $ N\to \infty $ yields the statement.

(b) For $ u\in D(H) $, we can follow the line of the above argument with $ \ph_{n}=1 $, i.e., $ \psi_{n}= \psi = g^{1/2} $.  To this end, we observe that $ u\psi\in \ell^{2}(X,wm) \cap D(h) $ which can be seen by applying Lemma~\ref{l:Lipform} whose assumptions are verified since $g \in \ell^{\infty}(X)$ satisfies the eikonal inequality and the Hardy inequality $ h\ge w $ on $ C_{c}(X) $ readily extends to $ D(H) $ (and thus we have $u \in \ell^2(X, wm)$ in the first place).  Moreover, the Caccioppoli type inequality with the assumptions of the second bullet point in Proposition~\ref{l:Cacciopolli} can be applied in the present situation. We finally point out that the integrability $u \psi \in \ell^2(X,wm)$ justifies the rearrangements of the series. This finishes the proof. 
\end{proof}

\begin{rem}
	The theorem above can be understood as an analogue to Robinson's Rellich inequality for local Dirichlet forms \cite[Theorem~1.1]{Ro18} on graphs. 		In his setting the author choses $ g=w $ and assumes an  eikonal inequality in his Condition II. Moreover, Robinson's Condition III corresponds to the  approximability assumed in~(a) of the above theorem. Furthermore, \cite{Ro18} only assumes the function $ g $ to be locally bounded rather than bounded. A corresponding assumption in the non-local setting is that $ g(x)/g(y)\leq C $ for some $ C\ge0 $ and all $ x\sim y $. By replacing a possibly unbounded $ g $ with $ g\wedge N $ for some $N \in \N$ and taking the limit $ N\to\infty $ at the end, the above proof goes through with this assumption as well.  However, for our purposes we pursue a slightly 	different line of argumentation which is carried out below.
\end{rem} 

The theorem above gives rise to our main results. There are two settings in which our results apply. The first one has an assumption on the combinatorial structure of the graph which is local finiteness. In the second setting we relax this condition to a more technical one which includes local finiteness. However, we have to assume additionally that the Hardy weight $ w $ is strictly positive.

\begin{thm}[Agmon estimate -- the locally finite case]\label{t:main} Suppose the graph is locally finite. Let   $ K\subseteq X $ be a finite set, $ w_{N}\ge 0  $ be such that  $h\ge w_{N}$ on $C_{c}(X\setminus K)$, $ N\in \N $. Suppose there is a constant $ 0<\gamma<1 $ {and a monotone increasing $(g_N)$ in $ C(X) $ converging to some $ g\ge 0 $}
	such that for all large $ N $ 
	the eikonal inequality
	\begin{align*}
		\left|\nabla g_N^{1/2}\right|^{2}\leq \gamma g_{N} w_{N} 
	\end{align*}
	is satisfied.
	Then, for every
	\begin{itemize}
		\item[(a)] approximable (positive sub-)solution   $u \in \mathcal{F}$
		\item[(b)] or solution  $u\in   D(H)$ whenever $ q\in V $
	\end{itemize}
	to the equality { $ \mathcal{H}u= f $  for some $ f\in C_{c}(X) $, one has
		\begin{align*}
			u\in \ell^{2}(X,gwm),
		\end{align*}
		where $ w=\liminf_{N\to\infty}w_{N} $. Whenever, $ w_{N}\ge w $ the statement holds also for  $f \in \ell^2(X,gw^{-1}m)$} whose support is included in the support of $ w $.	
\end{thm}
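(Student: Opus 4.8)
The plan is to reduce the statement to the Rellich inequality, Theorem~\ref{p:main}, whose hypothesis is a \emph{global} Hardy inequality on $C_c(X)$, by modifying $h$ and $w_N$ on a finite combinatorial neighbourhood of the relevant finite set, applying Theorem~\ref{p:main} for each large $N$, and then letting $N\to\infty$ via monotone convergence of $(g_N)$, the relation $\liminf_N w_N=w$, and Fatou's lemma.

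First I would enlarge the exceptional set. Put $L:=K\cup\supp f$, which is finite, and let $N(L)$ be its combinatorial neighbourhood, finite by local finiteness. Since $h\ge w_N$ on $C_c(X\setminus K)\supseteq C_c(X\setminus L)$ and $w_N\ge0$, the form $h-w_N1_{X\setminus N(L)}=h_{b,\,q-w_N1_{X\setminus N(L)}}$ is $\ge0$ on $C_c(X\setminus L)$. Applying Lemma~\ref{l:positive} to it with exceptional set $L$ produces $\chi=2b_L-\lambda_L1_L$, supported on $N(L)$, with $h+\chi\ge w_N1_{X\setminus N(L)}$ on $C_c(X)$. The decisive observation is that $\chi$ does not depend on $N$: because $L\subseteq N(L)$, every $\ph\in C_c(L)$ satisfies $(h-w_N1_{X\setminus N(L)})(\ph)=h(\ph)$, so both $\lambda_L=\inf\{h(\ph)\mid\ph\in C_c(L),\ \|\ph\|=1\}$ and $b_L$ depend only on $h$ and $L$.

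Next, for each large $N$ I would repair the weight on the collar $N(L)$. As $N(L)$ is finite and, by the eikonal inequality, $|\nabla g_N^{1/2}|^2$ vanishes wherever $g_N$ does, there is a constant $c_N\ge1$ with $|\nabla g_N^{1/2}|^2\le\gamma g_Nc_N$ on $N(L)$; set $\tilde w_N:=w_N1_{X\setminus N(L)}+c_N1_{N(L)}$ and $\chi'_N:=\chi+c_N1_{N(L)}$, a bounded function supported on the finite set $N(L)$. Then $\tilde w_N\ge0$, $\tilde w_N\ge1>0$ on $N(L)\supseteq\supp\chi'_N$, one has $h+\chi'_N\ge\tilde w_N$ on $C_c(X)$, and $|\nabla g_N^{1/2}|^2\le\gamma g_N\tilde w_N$ on all of $X$. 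Moreover $u$ solves $(\mathcal H+\chi'_N)u=f+\chi'_Nu=:\tilde f_N$ with $\supp\tilde f_N\subseteq\supp f\cup N(L)=N(L)\subseteq\supp\tilde w_N$; in case (a) $u$ is still an approximable solution in $\mathcal F$ (approximability depends only on $b$), and in case (b) $u\in D(H+\chi'_N)=D(H)$ with $q+\chi'_N\in V$, since $\chi'_N$ is a bounded, compactly supported perturbation of $q$. Theorem~\ref{p:main}, applied to $h_{b,\,q+\chi'_N}=h+\chi'_N$, the Hardy weight $\tilde w_N$ and the bounded positive function $g_N$, then yields
\begin{align*}
(1-\gamma)^2\sum_X|u|^2g_N\tilde w_Nm\ \le\ \sum_{x\in N(L)}|\tilde f_N(x)|^2g_N(x)\tilde w_N(x)^{-1}m(x)\ =:\ R_N .
\end{align*}

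Finally I would pass to the limit. Off $N(L)$ one has $\tilde w_N=w_N$, so $(1-\gamma)^2\sum_{X\setminus N(L)}|u|^2g_Nw_Nm\le R_N$. Testing the Hardy inequality $h\ge w_N$ against normalised point masses gives $w_N\le\Deg+q$ on $X\setminus K$, a bound independent of $N$; on the finitely many $x\in K$ one has only $\liminf_Nw_N(x)=w(x)<\infty$, so a diagonal extraction over the finite set $N(L)$ yields a subsequence $N_k\uparrow\infty$ along which $c_{N_k}$ stays bounded (using $c_N\le\max(1,\max_{N(L)}w_N)$), hence along which $R_{N_k}$ stays bounded (using in addition $g_N\le g$ and $|\tilde f_N|\le|f|+(|\chi|+c_N)|u|$ on the finite set $N(L)$). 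Since $g_N\nearrow g$ and $\liminf_Nw_N=w$ give $\liminf_kg_{N_k}(x)w_{N_k}(x)\ge g(x)w(x)$ for every $x$, Fatou's lemma yields $\sum_{X\setminus N(L)}|u|^2gwm\le(1-\gamma)^{-2}\liminf_kR_{N_k}<\infty$; adding the finite contribution of $N(L)$ gives $u\in\ell^2(X,gwm)$. The last assertion follows in the same way with $L:=K$ (there is no need to absorb $\supp f$, since off $N(K)$ one has $\tilde f_N=f$, and $\supp f\subseteq\supp w\subseteq\supp\tilde w_N$): the complement of the collar now contributes $\sum_{X\setminus N(K)}|f|^2g_Nw_N^{-1}m\le\sum_X|f|^2gw^{-1}m<\infty$ by $g_N\le g$ and $w\le w_N$. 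The main obstacle is precisely the gap between the hypothesis — a Hardy inequality only \emph{outside} the finite set $K$ — and the global Hardy inequality required by Theorem~\ref{p:main}; Lemma~\ref{l:positive} is designed to bridge it, and the point that makes it work uniformly in $N$ is the inflation of the exceptional set to $L$, which renders the correction $\chi$ $N$-independent, the only surviving $N$-dependence (the constant $c_N$ on the finite collar) being tamed by the point-mass bound on $w_N$ off $K$ together with a diagonal subsequence.
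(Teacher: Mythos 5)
Your proof is correct and follows essentially the same route as the paper's: both reduce to the Rellich inequality (Theorem~\ref{p:main}) by invoking Lemma~\ref{l:positive} to upgrade the Hardy inequality outside $K$ to a global one at the cost of a correction supported on a finite combinatorial neighbourhood, repair the weight on that collar so that nonnegativity, the eikonal inequality and the support condition on the right-hand side all survive, and then let $N\to\infty$ via Fatou and monotone convergence. The only difference is bookkeeping on the collar: the paper replaces $w_N$ on $K$ by the eikonal quantity $|\nabla g_N^{1/2}|^2/(\gamma g_N)$, which is automatically $\le w_N$ and uniformly bounded there, whereas you zero the weight out on $N(L)$ to make $\chi$ independent of $N$ and then control the added constant $c_N$ by the point-mass bound on $w_N$ together with a subsequence extraction -- both devices accomplish the same thing.
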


\begin{proof}  By enlarging $ K $ we can assume that  $ f\in C_{c}(X) $ is supported in $K  $. Furthermore, we argue that we can replace $ w_{N} $ by some $ w_{N}'\ge0 $ that is uniformly bounded  on $ K$. To this end, set $ w_{N}' =w_{N}$ on $X\setminus K $ and we replace $ w_{N} $ on $ K $ by 
	$$ w_N'(x)=
	\begin{cases}
		\frac{|\nabla g_{N}^{1/2}|^{2}(x)}{\gamma g_{N}(x)} &: g(x) \neq 0,\\
		0&:\mbox{else}.
	\end{cases}  $$
	Since we assumed that $ w_{N} $ satisfies  the eikonal inequality, we have $ w_{N}'\leq w_{N} $ and $ w_{N}' $ satisfies the eikonal inequality as well.
	Moreover,  since  $ w_{N}'\leq w_{N} $ we have $ h-w_{N}'\ge $  on $ X\setminus K $.  Now, for $ x\in K $, the sequence $ w_{N}'(x) $ converges to $ |\nabla g^{1/2}|^{2}(x)/(\gamma g(x))  $ whenever $ g(x) \neq 0$ and to $ 0 $ otherwise as $ N\to\infty $. Thus, the function  $ w_{N}' $ admits a uniform upper bound on the finite set $ K$.

	So, 	we have $h- w_{N}' \geq 0$ on $C_c(X \setminus K)$ and by Lemma~\ref{l:positive} there is $\chi_{N}\in C_{c}(N(K))$ such that  $h+{\chi}_{N}\ge w^{\prime}_{N}$ on $C_{c}(X)$.	
	Let $$ \tilde  \chi_{N} =\chi_{N}+1_{N(K)}\qquad\mbox{and}\qquad \tilde w_{N}=w_{N}'+ 1_{N(K)}$$ 
	for $ N\ge1 $.
	Observe that as $ h+\chi_{N}\ge w_{N}' $ on $ C_{c}(X) $ we have
	\begin{align*}
		h+\tilde \chi_{N}\ge \tilde w_{N}\qquad\mbox{on }C_{c}(X)
	\end{align*}
	and, moreover, $ \tilde w_{N}\ge 1$ on $ N(K) $.
	We define $\tilde f_{N}:X\to\R$ as 
	\[
	\tilde f_{N} := f + \tilde \chi_{N} u.
	\] 
	We  assumed that $ f $ is supported in $ K$ and since $\tilde  w_{N} $ is larger than $ 1 $ on $ N(K) $, the support of $ \tilde f_{N} $ is included in the support of $\tilde w_{N} $.
	Hence, we can apply  Theorem~\ref{p:main} to obtain
	\begin{align*}
		(1-\gamma)^{2}	\sum_{X}|u|^{2}g_{N} \tilde w_{N} m\leq \sum_{X}|\tilde{f}_{N}|^{2}g_N\tilde w^{-1}_{N}m.
	\end{align*}
	To argue that the right hand side has a uniform upper bound, we first observe that $ \tilde f_{N} $ is supported on the finite  set $ N(K) $ on which  $ g=\lim_{N\to\infty}g_{N} $ has a uniform  upper bound 
	and  $  \tilde w_{N} \geq 1 $. 
	Thus, it remains to argue that $ \tilde f_{N} $ stays bounded on $ N(K) $. To this end, we take the precise form of $ \chi_{N}=2b_{K}-\lambda_{K} $ given by Lemma~\ref{l:positive} into consideration and observe that the infimum   $ \lambda_{K} $ of $ (h-w_{N}')(\ph) $ for $\ph\in C_{c}(K)  $ is bounded from below by  the negative uniform upper bound of $ w_{N}' $ on $K$ since $ h\ge0 $. We obtain 
	\begin{align*}
		\sup_{N(K)} |\tilde \chi_{N}|=1+\sup_{N(K)} |\chi_{N}|\leq 1 +2\sup_{N(K)}b_{K}+\sup_{K}w_{N}'<\infty
	\end{align*}
	since $ N(K) $ is finite. Thus,  $ \tilde f_{N} =f+\tilde \chi_{N} u $ is uniformly bounded on $ N(K) $.
	Hence, we conclude the statement for $ f\in C_{c}(X) $ by Fatou's lemma and  $ \liminf_{N\to\infty}\tilde w_{N}=w+1_{N(K)} $.
	
	If $ w_{N}\ge w $ and $ f\in \ell^{2}(X,gw^{-1}m) $, we argue that the right hand side in the crucial inequality from Theorem~\ref{p:main} above   stays finite via Fatou on the left hand side and  monotone convergence outside of the finite set $ N(K) $ on the right hand side.
\end{proof}

We now come to the second Agmon estimate where we relax the assumption on the combinatorial structure and generalize it to a functional analytic assumption. However, here we have to assume that the Hardy weights $ w_{N} $ are strictly positive.

\begin{thm}[Agmon estimate -- strictly positive Hardy weight]\label{t:main1} Let    $ K\subseteq X $ be a finite set and $ w_{N}\ge w>0  $ be such that  $h\ge w_{N}$ on $C_{c}(X\setminus K)$, $ N\in \N $, and $ w=\liminf_{N\to\infty}w_{N}$.
Assume $ \mathcal{H}C_{c}(K)\subseteq \ell^{2}(X,gw^{-1}m) $ and that there is a constant $ 0<\gamma<1 $
and a monotone increasing  $(g_N)$ in $ C(X) $ converging to $ g\ge 0 $
	such that for all large $ N $ the eikonal inequality
	\begin{align*}
		\left|\nabla g_N^{1/2}\right|^{2}\leq \gamma g_{N} w_{N} 
	\end{align*}
is satisfied.
	Then, for every
	\begin{itemize}
		\item[(a)] approximable (positive sub-)solution   $u \in \mathcal{F}$
		\item[(b)] or solution  $u\in   D(H)$ whenever $ q\in V $
	\end{itemize}
	to the equality { $ \mathcal{H}u= f $  for some $f \in \ell^2(X,gw^{-1}m)$} one has
	\begin{align*}
		u\in \ell^{2}(X,gwm).
	\end{align*}
\end{thm}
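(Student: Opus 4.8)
The plan is to reduce to the Rellich inequality, Theorem~\ref{p:main}, applied not to $u$ itself but to its truncation
\[
\hat u:=u\cdot 1_{X\setminus K},
\]
which vanishes on the finite set $K$. The point of this substitution is that, although the Hardy inequality $h\ge w_N$ is only assumed on $C_{c}(X\setminus K)$, every function of the form $\psi\hat u$ with $\psi\in C_{c}(X)$ again lies in $C_{c}(X\setminus K)$; this is precisely what lets the argument of Theorem~\ref{p:main} go through here, and it replaces the role played by local finiteness, via Lemma~\ref{l:positive}, in the proof of Theorem~\ref{t:main}. Since $\hat u^{2}\le u^{2}$, the function $\hat u$ is again approximable whenever $u$ is, clearly $\hat u\in\mathcal{F}$, and as $u,\,u 1_K\in\mathcal{F}$ the additivity of $\mathcal{H}$ gives, on all of $X$,
\[
\mathcal{H}\hat u=\mathcal{H}u-\mathcal{H}(u 1_K).
\]
Setting $\hat f:=f-\mathcal{H}(u 1_K)$, this means $\mathcal{H}\hat u=\hat f$ when $u$ solves $\mathcal{H}u=f$, and $\mathcal{H}\hat u\le\hat f$ when $u$ is a positive subsolution. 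Because $w_N\ge w>0$ we have $\supp w_N=X$, so $\hat f$ is automatically supported in $\supp w_N$; thus $\hat u$ is once more an approximable (positive sub-)solution, now of $\mathcal{H}\hat u=\hat f$. In case (b), with $q\in V$, the function $u\in D(H)$ is itself approximable --- by the characterisation of approximable functions of finite energy in the form domain established in Section~\ref{s:approx} --- so that case is subsumed under case (a).

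The decisive observation is that $\hat f\in\ell^{2}(X,gw^{-1}m)$: indeed $f$ lies there by hypothesis, and since $u 1_K\in C_{c}(K)$ the assumption $\mathcal{H}C_{c}(K)\subseteq\ell^{2}(X,gw^{-1}m)$ gives $\mathcal{H}(u 1_K)\in\ell^{2}(X,gw^{-1}m)$. I would then run the proof of Theorem~\ref{p:main} verbatim, with the substitutions $u\mapsto\hat u$, $f\mapsto\hat f$, $w\mapsto w_N$, $g\mapsto g_N$ (each $g_N$ being bounded, or else replaced by $g_N\wedge N$ with the limit taken at the end, cf.\@ the remark after Theorem~\ref{p:main}). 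The only step of that proof which is not purely algebraic or an instance of Cauchy--Schwarz is the lower bound $h(\varphi_n g_N^{1/2}\hat u)\ge\sum_{X}\varphi_n^{2}g_N\hat u^{2}w_N m$ coming from the Hardy inequality; since $\varphi_n g_N^{1/2}\hat u\in C_{c}(X\setminus K)$, this is exactly what the hypothesis $h\ge w_N$ on $C_{c}(X\setminus K)$ delivers, while the eikonal inequality is invoked only weighted by $\hat u^{2}\varphi_n^{2}$, i.e.\@ on $X\setminus K$. One thereby obtains, for all large $N$,
\[
(1-\gamma)^{2}\sum_{X}|\hat u|^{2}g_N w_N m\le\sum_{X}|\hat f|^{2}g_N w_N^{-1} m .
\]

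It then remains to let $N\to\infty$. From $g_N\nearrow g$ and $w_N\ge w$ the right-hand side is bounded, uniformly in $N$, by $\|\hat f\|_{\ell^{2}(X,gw^{-1}m)}^{2}<\infty$. On the left-hand side $|\hat u|^{2}g_N w_N=|u|^{2}g_N w_N 1_{X\setminus K}$ and, since $g_N\to g$ while $\liminf_{N}w_N=w$, one has $\liminf_{N}(g_N w_N)\ge gw$ pointwise; Fatou's lemma yields $\sum_{X\setminus K}|u|^{2}gw m<\infty$, and adding the finite sum over $K$ gives $u\in\ell^{2}(X,gwm)$. The main obstacle --- and the reason the hypothesis $\mathcal{H}C_{c}(K)\subseteq\ell^{2}(X,gw^{-1}m)$ is imposed --- is precisely this control of the correction term $\mathcal{H}(u 1_K)$ in the weighted $\ell^{2}$-norm, which serves as the functional-analytic substitute for the finiteness of the combinatorial neighbourhood $N(K)$ exploited in Theorem~\ref{t:main}. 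Everything else is a routine adaptation of the proof of the Rellich inequality.
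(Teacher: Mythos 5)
Your proposal is correct and follows the same underlying strategy as the paper: cut out the finite set $K$, absorb the correction term $\mathcal{H}(u1_{K})$ into the right-hand side, and use the hypothesis $\mathcal{H}C_{c}(K)\subseteq\ell^{2}(X,gw^{-1}m)$ to keep that correction in the weighted $\ell^{2}$-space before passing to the limit in $N$ via Fatou and the uniform bound coming from $g_{N}\le g$, $w_{N}\ge w$. The implementation differs in one respect: the paper restricts the \emph{graph} to $\tilde X=X\setminus K$, adding $\sum_{y\in K}b(\cdot,y)$ to the potential so that the restricted form $\tilde h$ satisfies the Hardy inequality on all of $C_{c}(\tilde X)$, and then invokes Theorem~\ref{p:main} as a black box on the new graph; you instead truncate the \emph{function}, $\hat u=u1_{X\setminus K}$, stay on the original graph, and re-run the proof of Theorem~\ref{p:main}, observing that every test function $\psi_{n}\hat u$ lands in $C_{c}(X\setminus K)$ where the Hardy inequality is available. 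Both work; the paper's device buys a clean citation of Theorem~\ref{p:main} as stated, whereas your route requires re-opening its proof (you cannot cite it directly, since its Hardy hypothesis is on all of $C_{c}(X)$) --- you acknowledge this, and the inspection you describe is accurate. Two points deserve to be made explicit. First, your reduction of case (b) to case (a) via "$u\in D(H)$ with $q\in V$ is approximable" is a genuine deviation from the paper, which handles (b) separately through Lemma~\ref{l:Lipform} with $\varphi_{n}\equiv1$; your reduction is legitimate, but the lemma on approximable functions in the form domain requires the finite-energy hypothesis $\sum_{X}(|\nabla u|^{2}+|q|u^{2})m<\infty$, which for $u\in D(H)$ must be extracted from $q\in V$ (via $D(h)=D(h_{+})$, Fatou along an approximating sequence for $h_{+}$, and the extension of the KLMN bound on $q_{-}$ to $D(h_{+})$); this should be spelled out. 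Second, the boundedness of $g_{N}$ required by Theorem~\ref{p:main} is not part of the hypotheses of the present theorem, so your fallback of replacing $g_{N}$ by $g_{N}\wedge N'$ and letting $N'\to\infty$ is indeed needed in general, not merely optional.
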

\begin{rem} If we drop the assumption $ w>0 $, then $ \mathcal{H}C_{c}(K)\subseteq \ell^{2}(X,gw^{-1}m) $ still implies that $ w $ does not vanish on the combinatorial neighborhood of $ K $.
On the other hand given that $ w $ does not vanish on the combinatorial neighborhood of $ K $, then  $ \mathcal{H}C_{c}(K)\subseteq \ell^{2}(X,gw^{-1}m) $ is in particular satisfied if the graph is locally finite. (Local finiteness implies that $ \mathcal{H} $ maps compactly supported functions to compactly supported functions.) 
\end{rem}

\begin{proof} We consider the set $ \tilde X=X\setminus K $  and consider $ C(\tilde X) $ be a subspace of $ C(X) $ via extension by $ 0 $ and we denote $ \tilde v=v 1_{\tilde X} $ for $v\in C(X)$. The graph $ \tilde b=b\vert_{\tilde X\times \tilde X} $ gives rise to a Schrödinger operator $ \tilde{ \mathcal{H}}  $ on $ \tilde X $ defined as
	\begin{align*}
		\tilde{ \mathcal{H}} \tilde v(x)=\frac{1}{\tilde m(x)}\sum_{y\in \tilde X}\tilde b(x,y)(\tilde v(x)-\tilde v(y))+\tilde q(x)\tilde v(x),
	\end{align*}
for $ x\in \tilde X $ and  $ v\in \mathcal{F}(X) $,  where $ \tilde m=m\vert_{\tilde X} $ and $$  \tilde q(x) =q(x)+\sum_{y\in K}b(x,y). $$
Then, $ \tilde {\mathcal{H} }\tilde v(x)=\mathcal{H} v(x)$ for each $x \in \tilde{X}$ and for all  $v\in\mathcal{F}(X)  $ that vanishes on $K$. Moreover, we denote the restriction of $ h $ to $ C_{c}(\tilde X) $ by $ \tilde h $ and observe that by Green's formula, Lemma~\ref{l:Green}, we have
\begin{align*}
	\tilde h(\ph)=\sum_{\tilde X}\ph \tilde{ \mathcal{H}}\ph m
\end{align*}
for $ \ph\in C_{c}(\tilde X) $. Moreover,
we clearly have
\begin{align*}
	\tilde h\ge \tilde w_{N}
\end{align*}
on $ C_{c}(\tilde X) $.  For the  squared gradient $|\tilde{\nabla} \tilde{v}|^2 $ corresponding to $ \tilde b $ and $ \tilde m $, we have
$ 	|\tilde \nabla \tilde v|^{2}\le  | \nabla v|^{2} $
for all $ v\in C(X) $ since $ \tilde b\leq b $. Thus, as $ \tilde g_{N}\tilde w_{N}=g_{N}w_{N} $ on $ \tilde X $,
\begin{align*}
	|\tilde \nabla \tilde g_{N}^{1/2}|^{2}\leq \gamma \tilde g_{N} \tilde w_{N} \quad  \mbox{ on } \quad \tilde{X}.
\end{align*}
 Finally, the equality $ \mathcal{H}u=f $ translates to the following  equality on $ \tilde X $
\begin{align*}
	\tilde{\mathcal{H}}\tilde u= \tilde f\qquad \mbox{with }\qquad \tilde f= 1_{\tilde X} f - 1_{\tilde X} \mathcal{H} (1_{X\setminus \tilde X}u).
\end{align*}
Thus, by the Rellich inequality, Theorem~\ref{p:main}, above we conclude
\begin{align*}
	(1-\gamma)^{2}	\sum_{\tilde X}|\tilde u|^{2}\tilde g_{N} \tilde w_{N}\tilde m\leq \sum_{\tilde X}|\tilde{f}|^{2}\tilde g_N\tilde w^{-1}_{N}\tilde m
\end{align*}
for all $N \in \N$. 
By Fatou's lemma applied to the left hand side and $ w_{N}\ge w>0 $ as well as monotone convergence with respect to $ g_N \to g$ applied to the right side, we obtain	
\begin{align*}
	(1-\gamma)^{2}	\sum_{\tilde{X}}| u|^{2}\tilde g \tilde w m\leq \sum_{\tilde{X}}|\tilde{f}|^{2}\tilde g\tilde w^{-1}m.
\end{align*}	
Observe that finiteness of the left hand side is equivalent to $ u\in\ell^{2}(X,gwm) $ since $ X $ and $ \tilde X $ differ only by the finite set $ K $. Moreover, the right hand side stays finite, i.e., $\tilde f\in \ell^{2}(X,gw^{-1}m)  $ since we assumed that  $ f\in \ell^{2}(X,gw^{-1}m)  $, as well as $ \mathcal{H} (u1_{X\setminus\tilde X})\in  \ell^{2}(X,gw^{-1}m) $ by assumption since $ u1_{X\setminus\tilde X}=u1_{K} $ is finitely supported.
\end{proof}

\section{Applications}\label{s:appl}
In this  section we apply the Agmon estimates above. We  formulate the results in the locally finite case presented in Theorem~\ref{t:main}. However,  the results hold also in the setting of Theorem~\ref{t:main1} and can be formulated in this setting also quite easily which we omit for the sake of brevity.

\subsection{Agmon estimates}

First we show an Agmon type estimate using the Agmon metric. In the case of $ \R^d $ this was introduced and studied by Agmon in \cite{Ag82}.

Let a symmetric function $\si:X\times X\to [0,\infty)$ and  $w:X\to[0,\infty)$ be given. Then,
a path distance $\rho_{\si,w}:X\times X\to[0,\infty)$ is given by
\begin{align*}
\rho_{\si,w}(x,y)=\inf_{x=x_{0}\sim\ldots\sim x_{n}=y}\sum_{i=0}^{n-1}1\,\wedge(w(x_{i})\wedge w(x_{i+1}))^{\frac{1}{2}}\si(x_{i},{x_{i+1}}).
\end{align*}
This distance gives rise to an pseudo-metric on $X$ and the cut off with $ 1 $ ensures that its jumps size is bounded by $ 1 $.

The special case we are interested in is where $\si$ is given by an intrinsic metric $d$ and $w$ is a Hardy weight, i.e., $h\ge w$ in the sense of quadratic forms on $C_{c}(X)$.

\begin{thm}\label{t:main2} Suppose that the graph is locally finite {and that $d$ is an intrinsic metric}. 
	Let  $\lm\in\R$ and $w\ge 0$ be  such that $(h-\lm)\ge w$ on $C_{c}(X\setminus K)$ for some finite set $K\subseteq X$. Then, for every
\begin{itemize}
	\item[(a)] approximable (positive sub-)solution $ u\in \mathcal{F}(X) $
	\item[(b)] solution in  $ D(H)$ whenever $ q\in V $
\end{itemize}
of { $ (\mathcal{H} - \lambda)u=0 $} on $ X\setminus K $ and 
 $\rho=\rho_{d,w}(o,\cdot)$ for some fixed $o\in X$ there is $r>0$ such that 
	\begin{align*}
u\in\ell^{2}(X,e^{{r}\rho}	wm).
	\end{align*}
Specifically, one can choose $ r>0 $ such that $ r^{2}\left(\frac{1+e^{r}}{16}\right)<1 $.	
\end{thm}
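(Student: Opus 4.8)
The plan is to derive the statement from the locally finite Agmon estimate, Theorem~\ref{t:main}, applied to the shifted operator $\mathcal H-\lm$ (the Schr\"odinger operator with potential $q-\lm$, whose form is $h-\lm$). First I would reduce to the case where $X$ is connected, since otherwise one argues separately on each connected component with its own base point; then $\rho=\rho_{d,w}(o,\cdot)$ is finite at every vertex. I would then invoke Theorem~\ref{t:main} with the constant Hardy weight $w_N:=w$ for all $N$ (so that $h-\lm\ge w_N$ on $C_c(X\setminus K)$ and $\liminf_N w_N=w$), with the monotone increasing sequence of truncated exponential weights
\begin{align*}
g_N:=\exp\!\big(r(\rho\wedge N)\big),\qquad g:=e^{r\rho}=\lim_{N\to\infty}g_N ,
\end{align*}
for a small $r>0$ to be fixed below (these lie in $C(X)$, are positive, and increase to $g$ precisely because $\rho<\infty$), and with the right-hand side $f:=1_K(\mathcal H-\lm)u$. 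Since $(\mathcal H-\lm)u$ vanishes (resp.\ is $\le0$) on $X\setminus K$, the function $f$ is supported on the finite set $K$, hence $f\in C_c(X)$, and $u$ is a (resp.\ positive sub-)solution of $(\mathcal H-\lm)u=f$ on all of $X$. In case (b) one also needs $u\in D(H-\lm)=D(H)$ and $q-\lm\in V$, both of which hold because shifting the potential by a constant changes neither the form domain nor membership in $V$. Granting the eikonal inequality discussed next, Theorem~\ref{t:main} then outputs $u\in\ell^2(X,gwm)=\ell^2(X,e^{r\rho}wm)$, which is the assertion.

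The substance of the proof is the verification of the eikonal inequality $|\nabla g_N^{1/2}|^2\le\gamma\,g_N w$ on $X$, uniformly in $N$, for some $0<\gamma<1$. For $x\sim y$ I would combine three facts: (i) the elementary convexity inequality $|e^a-e^b|\le\tfrac12|a-b|(e^a+e^b)$ for $a,b\ge0$; (ii) the jump size of $\rho_{d,w}$ is at most $1$, and more precisely $|\rho(x)-\rho(y)|\le 1\wedge(w(x)\wedge w(y))^{1/2}d(x,y)$, which is read off from the definition of $\rho_{d,w}$ by testing the single-edge path, together with the same bound for $\rho\wedge N$ in place of $\rho$ since $t\mapsto t\wedge N$ is $1$-Lipschitz; (iii) the intrinsic metric inequality $\sum_y b(x,y)d(x,y)^2\le m(x)$. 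Applying (i) with $a=\tfrac r2(\rho\wedge N)(x)$, $b=\tfrac r2(\rho\wedge N)(y)$, bounding $(g_N^{1/2}(x)+g_N^{1/2}(y))^2\le 2g_N(x)+2g_N(y)\le 2(1+e^r)g_N(x)$ (the last step via $|(\rho\wedge N)(x)-(\rho\wedge N)(y)|\le1$), and using (ii) in the form $|(\rho\wedge N)(x)-(\rho\wedge N)(y)|^2\le w(x)d(x,y)^2$, one obtains
\begin{align*}
\big(g_N^{1/2}(x)-g_N^{1/2}(y)\big)^2\le\frac{r^2(1+e^r)}{8}\,w(x)\,g_N(x)\,d(x,y)^2 ,
\end{align*}
and summing against $b(x,y)/(2m(x))$ together with (iii) yields
\begin{align*}
|\nabla g_N^{1/2}|^2(x)\le\frac{r^2(1+e^r)}{16}\,w(x)\,g_N(x) .
\end{align*}
Hence one may take $\gamma=r^2(1+e^r)/16$, and any $r>0$ with $r^2\bigl(\tfrac{1+e^r}{16}\bigr)<1$ — such $r$ exist since the left-hand side tends to $0$ as $r\to0^+$ — makes $\gamma<1$; this reproduces exactly the admissible range claimed in the theorem.

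I do not expect a serious obstacle: once one recalls that Theorem~\ref{t:main} asks for a monotone approximating sequence $(g_N)$, the choice $g_N=\exp(r(\rho\wedge N))$ is essentially forced, and the only real work is the elementary estimate (i) combined with the metric bookkeeping in (ii)--(iii). The points needing care are that truncation at level $N$ preserves both the jump-size and the Lipschitz bounds entering the eikonal inequality (it does, by $1$-Lipschitzness of $t\mapsto t\wedge N$), that $r$ may be chosen so small that $\gamma<1$, and that $f=1_K(\mathcal H-\lm)u$ is genuinely compactly supported — here local finiteness of the graph together with $u\in\mathcal F$ in case (a), resp.\ $u\in D(H)$ in case (b), guarantees that $(\mathcal H-\lm)u$ is a well-defined function on $X$. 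Modulo these checks the assertion follows directly from Theorem~\ref{t:main}.
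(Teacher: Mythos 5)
Your proposal is correct and follows essentially the same route as the paper: the paper also applies Theorem~\ref{t:main} with $w_N=w$ for all $N$ and $g_N=e^{\theta}$, $\theta=r\rho\wedge N$, and verifies the eikonal inequality with $\gamma=r^2(1+e^r)/16$ via the jump-size bound $\rho(x,y)\le 1$, the bound $|\rho(x)-\rho(y)|^2\le w(x)d(x,y)^2$ and the intrinsic-metric property; your elementary inequality (i) is exactly the estimate the paper isolates as Lemma~\ref{l:estimate}. The extra checks you record (compact support of $f=1_K(\mathcal H-\lm)u$, invariance of $D(H)$ and of $V$ under the constant shift $q\mapsto q-\lm$) are details the paper leaves implicit, and they are handled correctly.
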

We first prove a refinement of the mean value theorem for the exponential function which will serve subsequently as a substitute for the chain rule.

\begin{lemma}\label{l:estimate} For all  $ \theta:X\longrightarrow\R $,  we have with $ r=\sup_{x\sim y}|\theta(x)-\theta(y)| $
	\begin{align*}
	|\nabla e^{\theta /2}|^{2}&\leq {e^{\theta}}\left(\frac{1+e^{r}}{8}\right)|\nabla  \theta|^{2}.
	\end{align*}
\end{lemma}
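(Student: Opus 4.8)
The goal is a pointwise estimate bounding $|\nabla e^{\theta/2}|^2$ at a vertex $x$ in terms of $e^{\theta}$ and $|\nabla\theta|^2$ at $x$, with constant $(1+e^r)/8$ where $r$ is the global oscillation of $\theta$ across edges. The plan is to work edge by edge: recalling
\begin{align*}
|\nabla e^{\theta/2}|^2(x)=\frac{1}{2m(x)}\sum_{y\in X}b(x,y)\big(e^{\theta(x)/2}-e^{\theta(y)/2}\big)^2,
\end{align*}
it suffices to prove, for each pair $x\sim y$, the elementary inequality
\begin{align*}
\big(e^{\theta(x)/2}-e^{\theta(y)/2}\big)^2\leq e^{\theta(x)}\,\frac{1+e^r}{4}\,\big(\theta(x)-\theta(y)\big)^2,
\end{align*}
since summing against $b(x,y)/(2m(x))$ and noting $\theta(x)$ is constant in the sum then gives the claim (the factor $4$ versus $8$ accounts for the $|\nabla\theta|^2$ having its own $1/2$ in front). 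Note the asymmetry in $x$ and $y$ on the right-hand side is harmless because $|\theta(x)-\theta(y)|\le r$, so one may freely trade $e^{\theta(x)}$ for $e^{\theta(y)}$ up to a factor $e^r$ and symmetrize if desired; I will just keep $e^{\theta(x)}$ out front.

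The core is thus a one-variable estimate: for real $a,b$ with $|a-b|\le r$, show $(e^{a/2}-e^{b/2})^2\le e^{a}\tfrac{1+e^r}{4}(a-b)^2$. I would set $t=b-a$ so $|t|\le r$ and rewrite the left side as $e^{a}(1-e^{t/2})^2$; cancelling $e^a$ reduces everything to proving
\begin{align*}
(1-e^{t/2})^2\leq \frac{1+e^r}{4}\,t^2\qquad\text{for }|t|\le r.
\end{align*}
By the mean value theorem, $1-e^{t/2}=-\tfrac{1}{2}e^{\xi/2}t$ for some $\xi$ between $0$ and $t$, hence $(1-e^{t/2})^2=\tfrac14 e^{\xi}t^2$. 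Since $|\xi|\le|t|\le r$ we have $e^{\xi}\le e^r$, which already gives the bound $\tfrac14 e^r t^2$, and $e^r\le 1+e^r$, so this is even slightly stronger than needed. (The slightly weaker-looking constant $(1+e^r)/8$ in the statement presumably comes from also wanting a symmetric form or absorbing the case $t>0$ vs $t<0$; in any case the MVT argument dominates it.)

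The only mild subtlety — and the thing I would be careful about — is the bookkeeping of constants and of the $\tfrac1{2m(x)}$ factors: matching the $\tfrac18$ in the statement means tracking whether $|\nabla\theta|^2$ is defined with the $\tfrac1{2m(x)}$ prefactor (it is, per the paper's definition of $|\nabla f|$), so that the edgewise inequality needs constant $\tfrac14(1+e^r)$ to produce $\tfrac18(1+e^r)$ after the sum. There is no analytic difficulty beyond the one-line mean value estimate; the whole lemma is essentially the statement that $s\mapsto e^{s/2}$ has derivative controlled by $\tfrac12 e^{r/2}$ on an interval of length $r$, upgraded to a clean quadratic inequality by squaring. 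I would also remark that the value $+\infty$ is permitted in $|\nabla\cdot|^2$, in which case the inequality is trivial or both sides are infinite, so no case distinction is really required.
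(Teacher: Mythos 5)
Your overall skeleton --- reduce to a per-edge inequality and then to a one-variable estimate for $t\mapsto e^{t/2}$ --- is exactly the paper's, but the proof as written does not reach the stated constant, for two compounding reasons. First, the bookkeeping: since both $|\nabla e^{\theta/2}|^{2}(x)$ and $|\nabla\theta|^{2}(x)$ carry the \emph{same} prefactor $\tfrac{1}{2m(x)}\sum_{y}b(x,y)$, summing an edgewise bound $\bigl(e^{\theta(x)/2}-e^{\theta(y)/2}\bigr)^{2}\le c\,e^{\theta(x)}(\theta(x)-\theta(y))^{2}$ produces the final constant $c$ unchanged; there is no extra factor of $2$ to be gained from ``$|\nabla\theta|^{2}$ having its own $1/2$''. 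So you need the edgewise constant $c=\tfrac{1+e^{r}}{8}$, not $\tfrac{1+e^{r}}{4}$. Second, the mean value theorem gives $(1-e^{t/2})^{2}=\tfrac14 e^{\xi}t^{2}\le \tfrac{e^{r}}{4}t^{2}$ for $|t|\le r$, and $\tfrac{e^{r}}{4}>\tfrac{1+e^{r}}{8}$ for every $r>0$ (the two agree only at $r=0$). Hence what you actually prove is $|\nabla e^{\theta/2}|^{2}\le \tfrac{e^{r}}{4}\,e^{\theta}|\nabla\theta|^{2}$, which is strictly weaker than the lemma whenever $r>0$. This is not cosmetic: the constant $\tfrac{1+e^{r}}{8}$ propagates into the explicit thresholds $r^{2}(1+e^{r})/16<1$, $\al^{2}e^{\al}<8$ and the choice $r=2ae^{-a}$ in Theorems~\ref{t:intro2}, \ref{t:main2} and \ref{t:main5}.

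The paper closes the gap by replacing the worst-case MVT bound with the two-sided elementary inequality $|e^{a}-e^{b}|^{2}\le \tfrac{e^{2a}+e^{2b}}{2}|a-b|^{2}$, valid for all $a,b\in\R$ (quoted from \cite[Lemma~2.4]{HKW}). Applied with $a=\theta(x)/2$, $b=\theta(y)/2$ this gives the edgewise factor $e^{\theta(x)}\,\tfrac{1+e^{\theta(y)-\theta(x)}}{2}\cdot\tfrac14\le e^{\theta(x)}\,\tfrac{1+e^{r}}{8}$; the point is that averaging $e^{2a}$ and $e^{2b}$ yields $\tfrac{1+e^{r}}{2}$ where the crude bound $e^{\xi}\le e^{\max(a,b)\cdot 2}$ yields $e^{r}$, and that difference is exactly the factor you are missing. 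Either quote that inequality or prove directly that $(e^{t/2}-1)^{2}\le\tfrac{1+e^{t}}{8}\,t^{2}$ for all $t\in\R$ (e.g.\ via $e^{t/2}-1=\int_{0}^{t}\tfrac12 e^{s/2}\ds$ and Cauchy--Schwarz); then your edge-by-edge summation goes through with the correct constant $\tfrac{1+e^{r}}{8}$.
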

\begin{proof}
  We use the elementary inequality
	$$  	
	|e^{a}-e^{b}|^{2}\leq \frac{e^{2a}+e^{2b}}{2}|a-b|^{2} $$
	that holds for all $a,b \in \R$. 
	For a proof, see e.g.\@ \cite[Lemma~2.4]{HKW} (with the choices $R=1$ and $\alpha = b-a > 0$ if $b > a$; if $b=a$ the above inequality holds  trivially). 
%
%
Then,
\begin{align*}
|\nabla e^{\theta/2}|^{2}(x)&\leq \frac{e^{\theta(x)}}{2m(x)}\sum_{y\in X}b(x,y)\left(\frac{1+e^{\theta(y)-\theta(x)}}{2}\right)\left|\frac{\theta(x)}{2}-\frac{\theta(y)}{2}\right|^{2}\\
&\leq e^{\theta(x)}\left(\frac{1+e^{r}}{8}\right)\frac{1}{2m(x)} \sum_{y\in X} b(x,y)\left|{\theta(x)}-{\theta(y)}\right|^{2}.
\end{align*}	
This finishes the proof.
\end{proof}

\begin{proof}[Proof of Theorem~\ref{t:main2}]
	For $r >0$, let $g=e^{r\rho(o,\cdot)}$ and $ g_{N}=g\wedge e^N $ for $ N\in \N $.
	Setting $  \theta ={r}\rho(o,\cdot)\wedge N$, we have $ g_{N}=e^{\theta} $. Since $\rho(x,y) \leq 1$ for $x \sim y$, we observe that $\sup_{x \sim y} |\theta(x) - \theta(y)| \leq r$. 
	By the lemma above, Lemma~\ref{l:estimate}, we estimate
	\begin{align*}
		|\nabla g_N^{1/2}|^{2}&=|\nabla e^{\theta/2}|^{2}
		\leq e^{\theta}\left(\frac{1+e^{r}}{8}\right)r^{2}|\nabla \rho(o,\cdot)|^{2} 
		\\&\leq e^{\theta}\left(\frac{1+e^{r}}{8}\right)r^{2}\frac{w}{2m}\sum_{y\in X}b(\cdot,y)d^{2}(\cdot,y)\\
		&\leq\left(\frac{1+e^{r}}{16}\right)r^{2}e^{\theta}{w}=\left(\frac{1+e^{r}}{16}\right)r^{2}g_Nw,
	\end{align*}
	where we used the intrinsic metric property of $ d $ in the last estimate.
	Thus, the statement follows by Theorem~\ref{t:main}, applied with $w = w_N$ for all~$N~\in~\N$.
\end{proof}
\begin{rem}By a numerical calculation  one sees that one can choose $ 0<r <1.6235 $, i.e., $ r=\pi/2 $ in the estimate above.	
\end{rem}

Furthermore, we prove a result which is indeed a proper analogue of Agmon's theorem on the decay of generalized eigenfunctions, cf.\@ \cite[Theorem~1.5]{Ag82} and its interpretation. It phrases growth in terms of existence of an integral and says that  solutions ``which do not grow too fast, in fact decay rapidly''.

\begin{thm}
\label{c:main2} Suppose that the graph is locally finite {and that $d$ is an intrinsic metric}. 
Let   $w\ge 0$ be  such that $h\ge w$ on $C_{c}(X)$. Assume that the space with respect to the Agmon metric $ \rho = \rho_{d,w}(o, \cdot) $ is a complete metric space, where $o \in X$ is a fixed vertex.  Then, for every (positive sub-)solution $ u\in \mathcal{F}(X) $
of { $\mathcal{H} u=f $} for some $ f\in C(X) $ for which there is $ \al>0 $
such that $ \al^{2}e^{\al}/8<1 $ and 
\begin{align*}
	u\in\ell^{2}(X,e^{-{\al}\rho}	wm)
\end{align*}
one has for $ C=(1- \al^{2}e^{\al} / 8)^{-2} $
\begin{align*}
	\sum_{X}u^{2}e^{2\al\rho}wm\leq C\sum_{X}f^{2}e^{2\al\rho }w^{-1}m.
\end{align*}
In particular, whenever $ f $ is finitely supported within the support of $ w $, then
	\begin{align*}
		u\in\ell^{2}(X,e^{{\al}\rho}	wm).
	\end{align*}
\end{thm}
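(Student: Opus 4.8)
The plan is to deduce Theorem~\ref{c:main2} directly from the Rellich inequality, Theorem~\ref{p:main}, by choosing the auxiliary function $g$ to be an exponential of a truncated Agmon distance and checking the eikonal inequality with the help of Lemma~\ref{l:estimate}. Concretely, fix $\al>0$ with $\al^2 e^\al/8<1$, set $\theta_N := (2\al\rho) \wedge N$ and $g_N := e^{\theta_N}$, and let $g := e^{2\al\rho} = \lim_N g_N$. Since the jump size of $\rho = \rho_{d,w}(o,\cdot)$ is at most $1$, we have $\sup_{x\sim y}|\theta_N(x)-\theta_N(y)| \leq 2\al$, so Lemma~\ref{l:estimate} gives
\begin{align*}
	|\nabla g_N^{1/2}|^2 = |\nabla e^{\theta_N/2}|^2 \leq e^{\theta_N}\left(\frac{1+e^{2\al}}{8}\right)|\nabla\theta_N|^2 \leq g_N\left(\frac{1+e^{2\al}}{8}\right)(2\al)^2|\nabla\rho|^2,
\end{align*}
and then using $|\nabla\rho|^2 \leq \tfrac{w}{2m}\sum_y b(\cdot,y)d(\cdot,y)^2 \leq \tfrac{w}{2}$ by the intrinsic metric property (exactly as in the proof of Theorem~\ref{t:main2}) one gets $|\nabla g_N^{1/2}|^2 \leq \gamma g_N w$ with $\gamma$ a suitable constant. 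Here the hypothesis should really be read as $\al^2 e^\al/8<1$ being used in the slightly sharper bound coming from the factor $\tfrac{1}{2}$ from the intrinsic metric, so that $\gamma := \al^2 e^{2\al}/4 \cdot \tfrac12 = \al^2 e^{2\al}/8$ — but I would double-check against the constant $(1-\al^2 e^\al/8)^{-2}$ claimed in the statement; most likely the intended truncation is $\theta_N = (\al\rho)\wedge N$ with $g=e^{\al\rho}$, so $\sup_{x\sim y}|\theta_N(x)-\theta_N(y)|\leq \al$ and Lemma~\ref{l:estimate} yields the factor $(1+e^\al)/8 \leq e^\al/4$, giving $\gamma = \al^2 e^\al/8$ after absorbing the $\tfrac12$ — and the claimed $u\in \ell^2(X, e^{\al\rho}wm)$ is consistent with that reading. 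Reconciling the exponents is the first thing I would nail down precisely.

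Once the eikonal inequality $|\nabla g_N^{1/2}|^2 \leq \gamma g_N w$ holds with $0<\gamma<1$, the completeness assumption enters to guarantee that the truncations $g_N$ are in $C(X)$ and that the balls for $\rho$ are compact (a complete locally finite length space has this property), which is what legitimates applying the machinery. Actually, for the quantitative inequality I would apply Theorem~\ref{p:main} not with the truncated $g_N$ but argue that the eikonal inequality also holds with $g$ itself replaced by $g\wedge N$ — which is exactly $g_N$ — and that $g_N$ is bounded, so Theorem~\ref{p:main}(a) applies to the approximable (positive sub-)solution $u$ of $\mathcal{H}u = f$ with $f$ supported in $\supp w$, giving
\begin{align*}
	(1-\gamma)^2 \sum_X |u|^2 g_N w\, m \leq \sum_X |f|^2 g_N w^{-1} m.
\end{align*}
The hypothesis $u\in\ell^2(X, e^{-\al\rho}wm)$ is what is needed to ensure that $u$ is approximable in the first place — via Proposition~\ref{p:growth} or a direct cut-off argument using that the cut-off functions $\ph_n = (1 - d(B_n,\cdot)/\eps)_+$ (now measured in the Agmon metric) have $|\nabla\ph_n|^2 \leq \tfrac{1}{\eps^2}\cdot(\text{something} \lesssim w)$, so that $\sum u^2|\nabla\ph_n|^2 m \lesssim \sum_{\text{annulus}} u^2 e^{-\al\rho} w\, m \to 0$ along a subsequence because the tail of a convergent series vanishes. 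This is the step I expect to be the genuine obstacle: making the cut-off argument work cleanly with the Agmon metric (which is not itself intrinsic) rather than with $d$, and confirming the $\liminf$-over-annuli condition of Proposition~\ref{p:growth} is satisfied; the exponential weight $e^{-\al\rho}$ both helps (it makes the series summable) and complicates the gradient bound on the cut-offs.

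Finally I would pass to the limit: monotone convergence on the left gives $(1-\gamma)^2\sum_X u^2 e^{2\al\rho}w\,m \leq \liminf_N \sum_X f^2 g_N w^{-1}m$; if $f$ is finitely supported within $\supp w$ the right-hand side converges to $\sum_X f^2 e^{2\al\rho}w^{-1}m < \infty$ by finiteness of the support, yielding the displayed quantitative inequality with $C = (1-\gamma)^{-2} = (1-\al^2 e^\al/8)^{-2}$, and in particular $u\in\ell^2(X, e^{\al\rho}wm)$. For general $f\in C(X)$ the quantitative inequality still holds as stated (both sides may be infinite, but if the right side is finite the left side is too), and the "in particular" clause is immediate once $f$ is finitely supported within $\supp w$. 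The only remaining bookkeeping is to confirm $\gamma<1 \iff \al^2 e^\al/8 < 1$, which is precisely the standing hypothesis.
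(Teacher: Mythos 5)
Your plan---apply Theorem~\ref{p:main} directly with $g=e^{\al\rho}$ truncated at level $N$---has a genuine gap at exactly the point you flagged as ``the genuine obstacle'': Theorem~\ref{p:main}(a) requires $u$ to be \emph{approximable}, and approximability is \emph{not} a hypothesis of Theorem~\ref{c:main2}; the only standing assumption on $u$ is $u\in\ell^{2}(X,e^{-\al\rho}wm)$. Your attempt to derive approximability from this fails quantitatively. With cut-offs $\ph_n=(1-\rho(B_n,\cdot)/\eps)_+$ one has $|\nabla\ph_n|^{2}\le \eps^{-2}w$ on the annulus $A_n=B_{n+\eps+1}\setminus B_{n-1}$ (w.r.t.\ $\rho$), but on $A_n$ the weight $w$ equals $e^{\al\rho}\,e^{-\al\rho}w\approx e^{\al n}\,e^{-\al\rho}w$, so the honest bound is $\sum_X u^{2}|\nabla\ph_n|^{2}m\le \eps^{-2}e^{\al(n+\eps+1)}\sum_{A_n}u^{2}e^{-\al\rho}wm$. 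The tail of the convergent series $\sum u^{2}e^{-\al\rho}wm$ need not decay exponentially, so this does not tend to zero; the implicit constant in your ``$\lesssim$'' blows up like $e^{\al n}$. (Proposition~\ref{p:growth} is of no help either: it is formulated for annuli of the intrinsic metric $d$ and for the unweighted sums $\sum u^{2}m$.)

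The paper's proof avoids this entirely and is the idea your proposal is missing: instead of invoking Theorem~\ref{p:main} as a black box with a growing weight, it reruns the Caccioppoli/Hardy/Cauchy--Schwarz argument with the test functions $\psi_{n,N}=g_n\chi_N$, where $g_n=e^{\al(\rho\wedge(2n-\rho))/2}$ is the ``reflected'' weight that grows like $e^{\al\rho/2}$ only up to $B_n$ and then \emph{decays} like $e^{\al n}e^{-\al\rho/2}$ outside, and $\chi_N=(1-\rho(B_N,\cdot)/N)_+$ is compactly supported by Hopf--Rinow (this is where completeness and local finiteness enter). The hypothesis $u\in\ell^{2}(X,e^{-\al\rho}wm)$ then yields $\sum_X u^{2}g_n^{2}wm\le e^{2\al n}\sum_X u^{2}e^{-\al\rho}wm<\infty$ for each fixed $n$; this finiteness is what lets the error terms of order $N^{-1}$ and $N^{-2}$ coming from $|\nabla\chi_N|^{2}$ vanish as $N\to\infty$, and what justifies absorbing $\frac{\al^{2}}{8}e^{\al}\sum u^{2}g_n^{2}wm$ into the left-hand side before finally letting $n\to\infty$. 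In other words, the decaying weight $g_n$ plays the role that approximating cut-offs play in Theorem~\ref{p:main}, and no approximability of $u$ is needed. Your side remark about the exponent ($e^{2\al\rho}$ versus $e^{\al\rho}$) is a fair observation about the paper's bookkeeping---the proof's $g_n^{2}$ converges to $e^{\al\rho}$---but it is cosmetic and not the source of the gap.
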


\begin{proof}
The proof follows along the same idea as the proof of Theorem~\ref{p:main} but here we take explicit advantage of  having a complete Agmon metric to construct the cut-off functions.
Define   \begin{align*}
	g=g_{n}=e^{ \al(\rho\wedge(2n-\rho))/2} \quad\mbox{and}\quad \chi=\chi_{N}=\left(1- \frac{\rho(B_{N},\cdot)}{N}\right)_{+}
	\end{align*}	
\begin{align*}
			\psi=\psi_{n,N}=g_{n}\chi_{N}.
\end{align*}  
Local finiteness of the graph together with the fact that $ \rho $ is a path metric yields that the balls $ B_{N} $ are finite by a Hopf-Rinow type theorem, \cite{KMue}. 
Since the support of $ \psi_{n,N} $ is equal to the support of $ \chi_{N} $ which is $ B_{2N} $ we infer  $ \ph_{N}\in C_{c}(X) $. 
Moreover, $ g_{n}=e^{\al\rho}/2 $ on $ B_{n} $ and $ g_{n}=e^{\al(2n-\rho)/2} $ on $ X\setminus B_{n} $. Observe that for $ x\sim y $ such that $ x \in B_{n}$ and $ y\in X\setminus B_{n} $
\begin{align*}
	|\rho(x)-(2n-\rho(y))| &\leq |n - \rho(x)| + |\rho(y) -n | = |n - \rho(x) + \rho(y) -n| \\
	&= |\rho(x)-\rho(y)|\leq \rho(x,y).
\end{align*}
Thus, by the mean value theorem, the triangle inequality and the bound on the  jump size of $ \rho =\rho_{d,w} $ and the intrinsic metric property of $ d $
\begin{align*}
	|\nabla g_{n}|^{2} &\leq 
		|\nabla e^{ \al(\rho\wedge(2n-\rho))/2} |^{2}\leq\frac{\al^{2}}{8m}e^{ \al(\rho\wedge(2n-\rho)+1)} \sum_{y\in X}b(x,y) |\rho(o,x)-\rho(o,y)|^{2}\\
	&\leq \frac{\al^{2}}{8}e^{\al(\rho\wedge(2n-\rho)+1)}  w \frac{1}{m}\sum_{y\in X}b(\cdot,y)d(\cdot,y)^{2}\leq \frac{\al^{2}}{8}e^{\al} g_{n}^2 w.
	\end{align*}
	as well as
	\begin{align*}
		|\nabla \chi_{N}|^{2} \leq \frac{1}{N^{2}}\frac{1}{2m} \sum_{y\in X}b(\cdot,y)|\rho(B_{N},\cdot)-\rho(B_{N},y)|^{2}
		&\leq \frac{1}{2N^{2}}w.
	\end{align*}
	So, by the discrete Leibniz rule and estimating $ 0\leq \chi \leq 1 $ as well as Cauchy-Schwarz we obtain 
\begin{align*} 
	|\nabla \psi|^{2}=|\nabla g\chi|^{2}&\leq g^{2}|\nabla \chi|^{2}+|\nabla g|^{2}+2g|\nabla g||\nabla \chi |\\
	&\leq \left(\frac{1}{2N^{2}} +
	\frac{\al^{2}}{8}e^{\al}+\frac{\al}{4N}e^{\al/2}
	\right)g^{2}w.
\end{align*}

Retracing the estimates in the  proof of Theorem~\ref{p:main}, using the  Hardy inequality, the Caccioppoli  inequality, Lemma~\ref{l:Cacciopolli}, the Cauchy-Schwarz inequality and the estimates above, we obtain
\begin{align*}
	\sum_{X}\psi^{2}_{n,N}u^{2}wm
	\leq& \sum_{X}f\psi^{2}_{n,N}um+\sum_{X}u^2|\nabla\psi_{n,N}|^{2}m\\
	\leq&\Big(\sum_{X}u^{2}\psi^{2}_{n,N} w m\Big)^{\frac{1}{2}} \Big(\sum_{X}f^{2}\psi^{2}_{n,N}w^{-1}m\Big)^{\frac{1}{2}} \\&+\sum_{X}u^2\left(\frac{1}{2N^{2}} +
	\frac{\al^{2}}{8}e^{\al}+\frac{\al}{N}e^{\al}
	\right)g_{n}^{2}wm
\end{align*}
Now we invoke the assumption $ 	u\in\ell^{2}(X,e^{-{\al}\rho}	wm)$ to take the limit $ N\to\infty $. Indeed, $ g_{n}^2=e^{2\al n}e^{-\al\rho} $ outside of the finite set $ B_{n} $. So, we obtain by the virtue of Fatou's lemma on the left hand side and monotone convergence on the right hand sid
\begin{align*}
	\sum_{X}g^{2}_{n}u^{2}wm\leq\Big(\sum_{X}u^{2}g^{2}_{n} w m\Big)^{\frac{1}{2}} \Big(\sum_{X}f^{2}g^{2}_{n}w^{-1}m\Big)^{\frac{1}{2}} +\frac{\al^{2}}{8}e^{\al}\sum_{X}u^2
	g_{n}^{2}wm.
\end{align*}
Reordering the terms and taking the limit $ n\to\infty $ yields the result.
\end{proof}

\begin{proof}[Proof of Theorem~\ref{t:intro2}] Consider the operator $H= \Delta+q $ on a combinatorial graph with degree bounded by $ D $. The combinatorial graph distance $ d $ divided by $ \sqrt{D} $ is an intrinsic metric with finite jump size $ s=1/\sqrt{D} $.  Recalling the notion of the Agmon metric in the combinatorial situation we see that $ \varrho/\sqrt{D} =\rho_{d/\sqrt{D},w}$, where $ \varrho $ was defined in the introduction and $ w $ is a Hardy weight for $ H-\lambda $ outside of a compact set. Hence, Theorem~\ref{c:main2} above yields the statement.
\end{proof}
\subsection{Below the essential spectrum}
In this subsection we consider estimates for solutions below the essential spectrum. This in particular includes eigenfunctions of discrete eigenvalues below the essential spectrum.

\begin{thm}\label{t:main5} 
	Suppose the graph is locally finite and $d$ is an intrinsic metric with  jump size $1$. Let $\lm < \lm_{0}^{\mathrm{ess}}(H)$.
	Then, for every
	\begin{itemize}
		\item[(a)] approximable (positive sub-)solution $ u\in \mathcal{F} $
		\item[(b)] solution $ u\in D(H) $ whenever $ q\in V $
	\end{itemize}
	of $(\mathcal{H}-\lambda)u=0 $,  there is $ r>0 $  such that 
	\begin{align*}
	u\in\ell^{2}(X,e^{rd(o, \cdot )}m),
	\end{align*}
	{for some fixed $o \in X$}. 
	Specifically, we can choose $ r>0 $ such that with $	r^{2}\left(\frac{1+e^{r}}{16}\right)< \lambda_{0}^{\mathrm{ess}}(H) - \lambda =:a  $ which {in the case of $a \leq 1$} is satisfied by $ r=2ae^{-a}  $.
\end{thm}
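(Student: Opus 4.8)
The plan is to reduce the statement to the Agmon estimate in Theorem~\ref{t:main} by manufacturing a suitable family of Hardy weights and an associated ``potential'' function $g$ from the eigenvalue gap $a = \lambda_0^{\mathrm{ess}}(H) - \lambda > 0$. First I would invoke the Allegretto-Piepenbrink theorem, Proposition~\ref{p:AP}(b): since $\lambda < \lambda_0^{\mathrm{ess}}(H)$ and the graph is locally finite, there is a finite set $K \subseteq X$ with $h - \lambda \ge 0$ on $C_c(X \setminus K)$. In fact, for any $\lambda' \in (\lambda, \lambda_0^{\mathrm{ess}}(H))$ one has $h - \lambda' \ge 0$ on $C_c(X \setminus K')$ for some finite $K'$; choosing $\lambda' = \lambda + a/2$ (or, more carefully, $\lambda' = \lambda + \delta$ for small $\delta$) gives $(h - \lambda) \ge (\lambda' - \lambda) = \delta > 0$ on $C_c(X \setminus K')$. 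So $w := \delta$ (a positive constant) serves as a constant Hardy weight for $h - \lambda$ outside the finite set $K'$. The point of keeping a little room below $a$ is exactly the constant $a$ appearing in the final quantitative claim, so I would track the constants and in the end let $\delta \uparrow a$.

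Next I would produce the function $g$. Take $d$ to be the given intrinsic metric of jump size $1$, set $\rho = d(o,\cdot)$, and try $g = e^{r\rho}$ with $g_N = g \wedge e^N$, exactly as in the proof of Theorem~\ref{t:main2}. Writing $\theta = r\rho \wedge N$ one has $\sup_{x\sim y}|\theta(x)-\theta(y)| \le r$ because the intrinsic metric has jump size $1$, so Lemma~\ref{l:estimate} gives
\begin{align*}
|\nabla g_N^{1/2}|^2 \le e^\theta\Big(\frac{1+e^r}{8}\Big) r^2 |\nabla\rho|^2 \le \Big(\frac{1+e^r}{16}\Big) r^2 g_N
\end{align*}
using $|\nabla\rho|^2 \le \tfrac{1}{2m}\sum_y b(\cdot,y)d(\cdot,y)^2 \le \tfrac12$ from the intrinsic metric property. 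Comparing with the required eikonal inequality $|\nabla g_N^{1/2}|^2 \le \gamma g_N w_N$ with $w_N = w = \delta$ constant, we need $\big(\tfrac{1+e^r}{16}\big) r^2 \le \gamma \delta$ for some $\gamma < 1$, i.e. $r^2\big(\tfrac{1+e^r}{16}\big) < \delta$. Since $\delta$ can be taken arbitrarily close to $a$, this is exactly the stated condition $r^2\big(\tfrac{1+e^r}{16}\big) < a$; and when $a \le 1$ the choice $r = 2ae^{-a}$ satisfies it, which I would verify by the elementary estimate $(1+e^r)/16 \le 1/8$ for $r \le \ln 7$ together with $r^2/8 = a^2 e^{-2a}/2 < a$.

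Finally I would apply Theorem~\ref{t:main} with $K = K'$, $w_N = w = \delta$ for all $N$, the family $g_N \nearrow g = e^{r\rho}$, the constant $\gamma \in (0,1)$ chosen above, and $f = 0$ (the hypothesis is $(\mathcal H - \lambda)u = 0$, so in the shifted operator $\mathcal H - \lambda$ we have $\mathcal H u = \lambda u$, i.e. $u$ solves the homogeneous equation with zero right-hand side — note $f \equiv 0$ is trivially compactly supported, so case $f \in C_c(X)$ applies). Theorem~\ref{t:main} then yields $u \in \ell^2(X, g w m) = \ell^2(X, \delta e^{r\rho} m) = \ell^2(X, e^{rd(o,\cdot)}m)$, which is the claim; cases (a) and (b) of the present theorem correspond directly to cases (a) and (b) there. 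I expect the only genuinely delicate point to be the bookkeeping with the gap: one must apply Proposition~\ref{p:AP}(b) at a level strictly inside the gap to get a strictly positive constant Hardy weight on a finite-complement set, and then pass $\delta \uparrow a$ to obtain the sharp-looking constant; the rest is a direct citation of Theorem~\ref{t:main} and a repeat of the eikonal computation from Theorem~\ref{t:main2}.
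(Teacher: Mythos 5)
Your proposal is correct and follows essentially the same route as the paper: Allegretto--Piepenbrink (Proposition~\ref{p:AP}(b)) at a level strictly inside the gap to get a positive constant Hardy weight outside a finite set, then $g_N=e^{rd(o,\cdot)}\wedge e^N$, Lemma~\ref{l:estimate} with jump size $1$ for the eikonal inequality, and Theorem~\ref{t:main} with $f=0$; your bookkeeping with $\delta\uparrow a$ is if anything slightly cleaner than the paper's choice $\gamma a$. One small correction to your verification of the sample constant: the inequality $(1+e^{r})/16\le 1/8$ is false for any $r>0$ (it forces $e^{r}\le 1$); the claim for $r=2ae^{-a}$, $a\le 1$, instead follows from $1+e^{2ae^{-a}}\le 2e^{2a}$, which gives $r^{2}\bigl(\tfrac{1+e^{r}}{16}\bigr)\le \tfrac{a^{2}}{2}<a$.
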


\begin{proof}
	Let $ a=\lm_{0}^{\mathrm{ess}}(H)-\lambda $. Let furthermore, $ r>0 $ and $0< \gamma<1 $ be such that $ r^{2}\left(\frac{1+e^{r}}{16}\right)=\gamma a <a$.
	Then,   $ \lambda + \gamma a<\lm_{0}^{\mathrm{ess}}(H)$ and  by Proposition~\ref{p:AP}~(b) there is a compact set $K$ such that 
	\begin{align*}
		(h-\lm)\ge  \gamma w \quad\mbox{ on }\, C_{c}(X\setminus K),
	\end{align*}
	when we denote the constant function with value $a $ by $ w $.
	We set $g=  e^{r d(o, \cdot)}$.
	With  $  \theta =({r}d(o,\cdot))\wedge N$ we have 	$ g_N =g\wedge e^{N}= e^{\theta} $ and using that the intrinsic metric $d$ has jump size $1$ we get
	by the lemma above, Lemma~\ref{l:estimate},
	\begin{align*}
		|\nabla g_N^{1/2}|^{2} = 	|\nabla e^{\theta/2}|^{2} \leq r^{2}\left(\frac{1+e^{r}}{8}\right)e^{\theta}|\nabla d(o,\cdot)|^{2}\leq \gamma a e^{\theta} = \gamma g_N  w.
	\end{align*}
	Thus, we an apply  Theorem~\ref{t:main} to conclude the statement while noting that $w$ is a constant function. Moreover, for the particular choice of the constant $ r =2ae^{-a}$  {when $a \leq 1$} we observe that
	\begin{align*}
		r^{2}\left(\frac{1+e^{r}}{16}\right)=4a^{2} e^{-2a}\left(\frac{1+e^{2{a} e^{-a}}}{16}\right)\leq 4 a^{2} e^{-2a}\left(\frac{2e^{2{a} }}{16}\right)=\frac{a^{2}}{2}< a.
	\end{align*}
	This finishes the proof.
\end{proof}
\begin{proof}[Proof of Theorem~\ref{t:intro1}] Let the operator $H= \Delta+q $ on a combinatorial graph with degree bounded by $ D $ be given.  Furthermore, the combinatorial graph distance $ d $ divided by $ \sqrt{D} $ is an intrinsic metric with finite jump size. A generalized eigenfunction $ u $ satisfying the growth condition $ \|u\vert_{B_{n+1}\setminus B_{n-1}}\|\to 0 $, $ n\to\infty $, is approximable  due to Proposition~\ref{p:growth}.  If $ \lambda<\lambda_{0}^{\mathrm{ess}}(H) $, then $ u\in \ell^{2}(X,e^{rd(0,\cdot)/\sqrt{D}}) $ for some $ r>0 $ by the theorem above. This means that $ u $ decreases exponentially in the $ \ell^{2} $ sense.
\end{proof}

\subsection{Sparse graphs, Cheeger inequalities and discrete spectrum}

Next, we come to an application which includes sparse graphs as introduced in \cite{BGK15}, see also \cite[Section~10]{KLW} for the extension to the weighted case.  In these situations purely discrete spectrum occurs whenever the vertex degree tends to infinity uniformly. To be more precise we need some more notation.  
Let $ \deg,\deg_{m}:X\to[0,\infty) $ be given by
\begin{align*}
\deg(x)=\sum_{y\in X}b(x,y) +q_{+}(x)m(x),
\end{align*}
where $ q_{+} $ is again the positive part of $ q $,
and
\begin{align*}
\deg_{m}(x)=\frac{1}{m(x)} \deg(x)=\frac{1}{m(x)}\sum_{y\in X}b(x,y) +q_{+}(x).
\end{align*}
Furthermore, for a finite set $ W\subseteq X $, let
\begin{align*}
|\partial W|&=\sum_{(x,y)\in W\times X\setminus W}b(x,y)+\sum_{x\in W}q_{+}(x)m(x)\\|E(W)|&=\sum_{x,y\in W}b(x,y).
\end{align*}

A graph is called \emph{weakly sparse} if there are constants $ a,k \ge 0$ such that
\begin{align*}
|E(W)|\leq  a|\partial W|+km(W)
\end{align*}
for all finite sets $W \subseteq X$.
This is equivalent to the existence of constants $ \tilde a \in (0,1)$ and $ \tilde k $ such that
\begin{align}\label{e:F}\tag{F}
 (1-\tilde a) \deg_{m}-\tilde k\leq h\leq  (1+\tilde a) \deg_{m}+ \tilde k.
\end{align}
This is tightly related to so called Cheeger inequalities, i.e. if 
\begin{align*}
\alpha=\inf_{W\subseteq X \,\mbox{\scriptsize finite}}\frac{|\partial W|}{\mathrm{vol}(W)}>0, 
\end{align*}
where $ \mathrm{vol}(W)=\sum_{x\in W}\deg(x) $, then the form inequality above holds with $ \tilde a=\sqrt{1-\al^{2}} $ and $ \tilde k=0 $, \cite[proof of Proposition~15]{KL2}.
Consequently, the form inequality \eqref{e:F} implies that the operator $ H $ has purely discrete spectrum if $ \deg_{m} $ tends to infinity, i.e., 
\begin{align*}
D_{\infty}:=\liminf_{x\to\infty}\deg_{m}(x)
\end{align*}
is infinite,  where $ \infty $ denotes the additional point in   the one point compactification  $ \hat X=X\cup\{\infty\} $ of $ X $. Moreover, one also sees that the form inequality \eqref{e:F} above yields directly that for the closure of the form $h  $ one has
\begin{align}\tag{D}\label{D}
D(h)=\ell^{2}(X,m)\cap\ell^{2}(X,\deg).
\end{align}
For details we refer to \cite{BGK15,BGKLM20}. The next theorem applies to weakly sparse graphs and shows statements about the decay of eigenfunctions in this situation.


\begin{thm}\label{t:main4}
	 Suppose that the graph is locally finite.
	Let  $\lm\in\R$  and  {$a \in(0,1]$} be  such that $(h-\lm)\ge {{a}}\deg_{m}$ on $C_{c}(X\setminus K)$ for a finite set $K\subseteq X$. Then, for every
	\begin{itemize}
		\item[(a)] approximable (positive sub-)solution $ u\in \mathcal{F} $
		\item[(b)] solution  $u\in  D(H) $ whenever $ q\in V $
	\end{itemize}
	of $ (\mathcal{H}-\lambda)u=0 $ on $ X\setminus K $,
	there is $ r=r(a)>0 $ such that
	\begin{align*}
	 u\in\ell^{2}(X,e^{r|\cdot|}\deg),
	\end{align*}
	where $|x|$ is the natural graph distance of $x\in X$ to a fixed vertex $o\in X$.  In particular, one can choose $ r>0 $ such that {$	r^{2}\left(\frac{1+e^{r}}{16}\right) <a  $},  for example $ r=2a e^{-a} $.
\end{thm}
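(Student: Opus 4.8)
The plan is to reduce Theorem~\ref{t:main4} to the general Agmon estimate, Theorem~\ref{t:main}, by producing a suitable family of weight functions $w_N$ and profiles $g_N$ satisfying the eikonal inequality. Here the natural Hardy weight is $w = a\deg_m$, and the target decay is measured against $\deg = \deg_m \cdot m$, which is exactly $w\cdot m$ up to the constant $a$; thus $\ell^2(X, e^{r|\cdot|}\deg) = \ell^2(X, e^{r|\cdot|}\,a^{-1}w m)$, so it suffices to show $u\in\ell^2(X, g w m)$ with $g = e^{r|\cdot|}$.

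First I would set $g = e^{r|\cdot|}$, where $|\cdot| = d_{\mathrm{comb}}(o,\cdot)$ is the combinatorial graph distance, and truncate: $g_N := g\wedge e^N$, which is monotone increasing to $g$ and lies in $C(X)$. Writing $\theta := (r|\cdot|)\wedge N$ so that $g_N = e^\theta$, I observe that the combinatorial distance has jump size $1$, hence $\sup_{x\sim y}|\theta(x)-\theta(y)|\le r$. Applying the refined mean value estimate, Lemma~\ref{l:estimate}, gives
\begin{align*}
	|\nabla g_N^{1/2}|^2 = |\nabla e^{\theta/2}|^2 \le r^2\Big(\frac{1+e^r}{8}\Big)e^\theta\,|\nabla |\cdot|\,|^2.
\end{align*}
The key point is then to bound $|\nabla|\cdot||^2$ in terms of $\deg_m$. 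Since $|\,|x|-|y|\,|\le 1$ whenever $b(x,y)>0$, we have
$|\nabla |\cdot||^2(x) = \tfrac{1}{2m(x)}\sum_y b(x,y)(|x|-|y|)^2 \le \tfrac{1}{2m(x)}\sum_y b(x,y) \le \tfrac12\deg_m(x)$. Combining, with $w_N := w := a\deg_m$ for all $N$, yields
\begin{align*}
	|\nabla g_N^{1/2}|^2 \le r^2\Big(\frac{1+e^r}{16}\Big)e^\theta\,\deg_m = \gamma\, g_N\, w,
\end{align*}
where $\gamma := \frac{r^2}{a}\big(\frac{1+e^r}{16}\big)$, which is strictly less than $1$ precisely under the stated smallness condition $r^2(1+e^r)/16 < a$. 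The eikonal inequality of Theorem~\ref{t:main} is thus verified.

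With this in hand, the hypotheses of Theorem~\ref{t:main} are met: the graph is locally finite, $K$ is finite, $h-\lambda\ge w_N = w$ on $C_c(X\setminus K)$ with the $w_N$ constantly equal to $w$ (so $w=\liminf_N w_N$ and trivially $w_N\ge w$), and $(\mathcal{H}-\lambda)u = 0 = f$ on $X\setminus K$ with $f\in C_c(X)$. Case (a) or (b) of Theorem~\ref{t:main4} feeds directly into case (a) or (b) of Theorem~\ref{t:main}. The conclusion is $u\in\ell^2(X, g w m) = \ell^2(X, a\, e^{r|\cdot|}\deg_m\, m) = \ell^2(X, a\,e^{r|\cdot|}\deg)$, i.e.\ $u\in\ell^2(X, e^{r|\cdot|}\deg)$. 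Finally, for the explicit choice $r = 2ae^{-a}$ with $a\le 1$, the same numerical estimate used in the proof of Theorem~\ref{t:main5} shows $r^2(1+e^r)/16 = 4a^2e^{-2a}(1+e^{2ae^{-a}})/16 \le 4a^2e^{-2a}\cdot 2e^{2a}/16 = a^2/2 < a$, so the smallness condition holds.

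I do not expect a serious obstacle here: the proof is essentially a specialization of Theorem~\ref{t:main2}/Theorem~\ref{t:main5} with the intrinsic-metric weighting replaced by the direct bound $|\nabla|\cdot||^2\le \tfrac12\deg_m$, which bypasses the need for an intrinsic metric altogether. The only mild point requiring care is bookkeeping the constant $\gamma = r^2(1+e^r)/(16a)$ and checking it is $<1$ under the stated hypothesis, together with applying the $w_N \ge w$ branch of Theorem~\ref{t:main} correctly (trivial here since $w_N\equiv w$). The passage to the explicit $r=2ae^{-a}$ is the same elementary inequality already recorded earlier in the paper.
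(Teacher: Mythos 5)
Your proposal is correct and follows essentially the same route as the paper's proof: truncate $g=e^{r|\cdot|}$ to $g_N=e^{(r|\cdot|)\wedge N}$, apply Lemma~\ref{l:estimate} together with the bound $\bigl|\nabla|\cdot|\bigr|^2\le\tfrac12\deg_m$ to verify the eikonal inequality with $\gamma=a^{-1}r^2\bigl(\tfrac{1+e^r}{16}\bigr)<1$, and feed the constant family $w_N\equiv a\deg_m$ into Theorem~\ref{t:main}. The verification of the explicit choice $r=2ae^{-a}$ is likewise the same elementary estimate the paper invokes from the proof of Theorem~\ref{t:main5}.
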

\begin{proof}
	For $r > 0$ and $N \in \N$, define 	 $$   \theta =N\wedge{r}|\cdot| \quad\mbox{and}\quad g_N = e^{N}\wedge g=e^{\theta}.  $$ 
	{Now, choose $r > 0$ and set $\gamma= 	a^{-1}r^{2}\left(\frac{1+e^{r}}{16}\right) < 1$ so that $\gamma a < a$}. Then, denoting the combinatorial graph distance by $ d_{n} $ we obtain  $ |\theta(x)-\theta(y)| \leq r d_{n}(x,y)\leq r$ for $x \sim y$, and by Lemma~\ref{l:estimate} we find
	\begin{align*}
		\left|\nabla g_N^{1/2}\right|^2 =	\left|\nabla e^{\theta/2}\right|^{2}
		\leq 2\gamma {a} e^{\theta} \big|\nabla |\cdot| \big|^{2} = {\gamma} {a} e^{\theta} \deg_{m} = {\gamma a g_N  \operatorname{deg}_m}.
	\end{align*}
	Hence, the result follows  by Theorem~\ref{t:main}, applied with $w = w_N$ for all $N \in \N$. 
	The statement about the possible choice of $ r $ follows analogously to the argument given in the proof of the previous statement {while noting that ${a} \leq 1$}.
\end{proof}

The  Cheeger constant at infinity be given by
\begin{align*}
\al_{\infty}=\sup_{{K\subseteq X \mbox{\scriptsize\, finite}} }\inf_{W\subseteq X\setminus K \mbox{\scriptsize\, finite } }\frac{|\partial W|}{\mathrm{vol}(W)},
\end{align*}
where $ |\partial W|$  and  $\mathrm{vol}(W) $ are defined above. Also recall 
$ D_{\infty}=\liminf\deg_{m}(x) $. Whenever $ \alpha_{\infty}>0 $ and $ D_{\infty}=\infty $, then $ H $ has purely discrete spectrum, confer  \cite[Theorem~20]{KL2}. Thus, the next theorem gives estimates on the decay of the eigenfunctions in this case. \smallskip

\begin{thm}Suppose a locally finite graph is given with $ q=0 $ which  has positive Cheeger constant at infinity $\al_{\infty}>0$ and ${D}_{\infty}=\infty$. {Let $\lambda \in \R$} and let $ d $ be an intrinsic metric with jump size  $ 1 $.
	Then, for every
\begin{itemize}
	\item[(a)] approximable (positive sub-)solution $ u\in \mathcal{F} $
	\item[(b)] solution  $u\in  D(H)$
\end{itemize}
of $( \mathcal{H}-\lambda)u=0 $ and  all $R>0$, we have
	\begin{align*}
u\in \ell^{2}(X,	e^{Rd(o,\cdot)}\deg)
	\end{align*}
	and there is $ r=r(\al_{\infty})>0 $ such that
	\begin{align*}
u\in\ell^{2}(X, e^{r|\cdot|}\deg),
	\end{align*}
	where  $|x|$ is the natural graph distance of $x\in X$ to a fixed vertex $o\in X$.  Specifically, one can choose $ r>0 $ such that $	r^{2}\left(\frac{1+e^{r}}{16}\right)<1 -  \sqrt{1- \alpha_{\infty}^2}=:a$  which is for instance satisfied by $ r=2a e^{-a} $.
\end{thm}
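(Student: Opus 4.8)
The plan is to reduce the two assertions to the Agmon estimates already at our disposal: Theorem~\ref{t:main4} for the bound $u\in\ell^{2}(X,e^{r|\cdot|}\deg)$ and Theorem~\ref{t:main} for the bound $u\in\ell^{2}(X,e^{Rd(o,\cdot)}\deg)$. The only new ingredient is that $\al_{\infty}>0$ together with $D_{\infty}=\infty$ furnishes a lower form bound of the shape $\mathrm{const}\cdot\deg_{m}$ outside a finite set, with a constant arbitrarily close to $a:=1-\sqrt{1-\al_{\infty}^{2}}$ and --- more importantly --- with arbitrarily large multiples of $\deg_{m}$ available on the complement of a (larger) finite set. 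Concretely, since $\al_{\infty}>0$, for each $\eps>0$ there is a finite $K$ with $\inf_{W\subseteq X\setminus K}|\partial W|/\mathrm{vol}(W)\ge\al_{\infty}-\eps$; the Cheeger inequality recalled around \eqref{e:F} with $\tilde k=0$, applied on $C_{c}(X\setminus K)$ (note $q=0$), then gives $h\ge c_{\eps}\deg_{m}$ on $C_{c}(X\setminus K)$ with $c_{\eps}:=1-\sqrt{1-(\al_{\infty}-\eps)^{2}}>0$. Since $\deg_{m}(x)\to\infty$, the spectral shift is harmless: for every $\de>0$ there is a larger finite set $K'$ with $(h-\lm)\ge(c_{\eps}-\de)\deg_{m}$ on $C_{c}(X\setminus K')$.

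For $u\in\ell^{2}(X,e^{r|\cdot|}\deg)$ I would directly invoke Theorem~\ref{t:main4}. With $a=1-\sqrt{1-\al_{\infty}^{2}}\in(0,1]$ I choose $\eps,\de>0$ so small that $a':=c_{\eps}-\de\in(a^{2}/2,1]$, so that $(h-\lm)\ge a'\deg_{m}$ on $C_{c}(X\setminus K')$. Taking $r:=2ae^{-a}$, the same elementary computation as in the proof of Theorem~\ref{t:main5} gives $r^{2}(1+e^{r})/16\le a^{2}/2<a'$, so Theorem~\ref{t:main4}, applied with $a'$ in place of $a$ and with this choice of $r$ (legitimate since its proof works for any $r$ with $r^{2}(1+e^{r})/16<a'$), yields $u\in\ell^{2}(X,e^{r|\cdot|}\deg)$.

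For $u\in\ell^{2}(X,e^{Rd(o,\cdot)}\deg)$, fix $R>0$ and argue as in the proof of Theorem~\ref{t:main5}, with the Hardy weight $\tfrac{c}{2}\deg_{m}$ (for a fixed $c>0$ with $h\ge c\deg_{m}$ outside some finite set) in place of the constant weight used there. Using $\deg_{m}\to\infty$, enlarge that finite set to a finite set $K_{R}$ on whose complement one has both $(h-\lm)\ge\tfrac{c}{2}\deg_{m}$ and $\tfrac{c}{2}\deg_{m}\ge\tfrac{1}{8}R^{2}(1+e^{R})$. Put $\theta_{N}:=N\wedge Rd(o,\cdot)$, $g_{N}:=e^{\theta_{N}}$ (monotone increasing to $g:=e^{Rd(o,\cdot)}$) and $w_{N}:=\tfrac{c}{2}\deg_{m}$. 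Since $d$ is intrinsic with jump size $1$, $\sup_{x\sim y}|\theta_{N}(x)-\theta_{N}(y)|\le R$ and $|\nabla\theta_{N}|^{2}\le R^{2}/2$, so Lemma~\ref{l:estimate} yields $|\nabla g_{N}^{1/2}|^{2}\le\tfrac{1}{16}R^{2}(1+e^{R})\,g_{N}\le\tfrac12 g_{N}w_{N}$ on $X\setminus K_{R}$, i.e.\@ the eikonal inequality with $\gamma=\tfrac12$. Theorem~\ref{t:main} then gives $u\in\ell^{2}(X,gwm)$ with $w=\liminf_{N}w_{N}=\tfrac{c}{2}\deg_{m}$, that is $u\in\ell^{2}(X,e^{Rd(o,\cdot)}\deg)$; since $R>0$ was arbitrary, the first claim follows.

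I do not expect a deep obstacle, since the two Agmon theorems carry the analytic weight; the point that needs care is the choice of the finite set in the first step, which must be large enough that the Cheeger inequality on its complement delivers a constant within $\eps$ of $a$ and that, at the same time, $\deg_{m}$ is large enough there to swallow the spectral shift $\lm$ and --- for the first conclusion --- the $R$-dependent constant $R^{2}(1+e^{R})/16$ coming from Lemma~\ref{l:estimate}. It is exactly this last absorption, afforded by $D_{\infty}=\infty$, that removes any smallness restriction on $R$ in the first conclusion.
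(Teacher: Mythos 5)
Your proposal is correct and follows essentially the same route as the paper: both parts reduce to Theorems~\ref{t:main4} and~\ref{t:main}, using the Cheeger constant at infinity to produce a Hardy weight comparable to $a\deg_{m}$ outside a finite set and using $D_{\infty}=\infty$ to absorb both the spectral shift $\lm$ and the $R$-dependent constant from Lemma~\ref{l:estimate}. The only (immaterial) difference is bookkeeping: the paper couples $R=\sqrt{(1-\eps)a}\log n$ to the parameter $n$ controlling the lower bound on $\deg_m$ and lets $n\to\infty$, whereas you fix $R$ first and enlarge the exceptional set $K_{R}$ accordingly.
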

\begin{proof}	
	Let $\eps>0$. Choose $n=n(\eps)>0$  such that
	$\lambda  < {\varepsilon a {n}}(\log n)^2/2$,
	where $a =1 -  \sqrt{1- \alpha_{\infty}^2}$.
	Choose  $K\subseteq X$ finite such that
	$$\deg_m \geq n (\log n)^2\qquad\mbox{ on } X\setminus K$$
	and
	\begin{align*}
		h\ge (1-\eps/2) a\deg_m \qquad\mbox{ on } C_{c}(X\setminus K)
	\end{align*}
	which is possible by   \cite[Proposition~14 and proof of Proposition~15]{KL2} or \cite[Theorem~5.1 or proof of Theorem~5.3]{BGK15}.
	Then,
	\begin{align*}
		h-\lm\ge (1-\eps/2) a\deg_m -\lm\ge
		(1-\eps)a\deg_m =:w
	\end{align*}
	on $C_c(X \setminus K)$. 
	Hence, the second statement follows from Theorem~\ref{t:main4}. \smallskip
	
	For the first statement, let $R=\sqrt{(1-\eps)a}\log n$ and $ g=e^{Rd(o,\cdot)} $. 
	With  $  \theta =({R}d(o,\cdot))\wedge N$ for $N \in \N$ and, therefore, with	$g_N =  g\wedge e^{N}=e^{\theta} $ we have by Lemma~\ref{l:estimate} above {(which is applicable since $d$ has jump size $1$)}, the intrinsic metric property, $ R\leq \log n $ and $\mathrm{deg}_m \geq n(\log n)^2$ outside of $K$, 	
	\begin{align*}
		|\nabla g_N^{1/2}|^2 = |\nabla e^{\theta/2}|^{2}& \leq R^{2}\left(\frac{1+e^{R}}{8}\right)e^{\theta}|\nabla d(o,\cdot)|^{2}\\
		&\leq(1-\eps)a (\log n)^{2}\frac{1+n}{16}e^{\theta}\\
		&\leq \frac{1}{8}e^{\theta}(1-\eps)a\deg_{m}  =\frac{1}{8}e^{\theta}w = \frac{1}{8}g_N w.
	\end{align*}
	This and the fact that $n$ can be made arbirtrarily large allow us to obtain the first statement from Theorem~\ref{t:main}, applied with $w = w_N$ for all $N \in \N$. 
\end{proof}

\subsection{Hardy inequalities and the supersolution construction}

In this section we prove Agmon estimates which rely on Hardy inequalities obtained via the so called supersolution contruction. This method was introduced in \cite{DFP,DP16} for elliptic operators in the continuum and later extended to graphs in \cite{KePiPo2}. It was also used in \cite{KPP_Rellich} to prove Rellich inequalities.

We shortly recall the supersolution construction to obtain Hardy weights and, moreover, Rellich inequalities. We restrict ourselves to the case of $ q=0 $.

Let $ v$ be a strictly
positive superharmonic function. Then, for $ \al\in (0,1] $ the function
\begin{align*}
w_{\al}=\frac{\mathcal{H}(v^{\al})}{v^{\al}}
\end{align*}
is a Hardy weight. For $ \alpha=1/2 $, we call
\begin{align*}
w=w_{1/2}=\frac{\mathcal{H}v^{1/2}}{v^{1/2}}=\frac{1}{2v^{1/2}}\mathcal{H}v+\frac{|\nabla (v^{1/2} )|^{2}}{v}
\end{align*}
the Hardy weight corresponding to $ v $. Under some additional assumptions $ w $ is optimal. Optimality  means that
\begin{itemize}
	\item $\mathcal{H}-w $ is {\em critical}, i.e., for all Hardy weights $ w' $ with $ w'\ge w $ we have $ w=w' $.
	\item $\mathcal{H}-w$ is {\em null-critical with respect to} $w$, i.e.\@ the ground state $u^{1/2}$ of $ \mathcal{H}-w $ is not in $ \ell^{2}(X,wm) $.
\end{itemize}
The aforementioned additional assumptions for optimality are that $ v$ is harmonic outside of a finite set and $ v $ is  proper and satisfies an bounded oscillation  condition
\begin{align*}
\sup_{x\sim y}\frac{v(x)}{v(y)}<\infty.
\end{align*}

In the following theorem we deal with the Hardy weight corresponding to a strictly positive superharmonic function $v$ that only has to satisfy the bounded oscillation condition. 

\begin{thm}\label{t:mainHardy}	Suppose the graph is locally finite  and $ q=0 $. Let $v $ be a strictly positive superharmonic function that satisfies the bounded oscillation condition $\sup_{x \sim y} v(x) / v(y) < \infty$ and let $ w $ be the corresponding Hardy weight, i.e., $ w=\mathcal{H}(v^{1/2})/v^{1/2} $. 	Then, for every
	\begin{itemize}
		\item[(a)] approximable (positive sub-)solution $ u\in \mathcal{F} $
		\item[(b)] solution {$u \in D(H)$}
	\end{itemize}
	of {$\mathcal{H}u= f $ for some $f \in C_c(X)$} and for all $ \al\in (0,1) $
	\begin{align*}
	u\in\ell^{2}(X,v^{\alpha}wm).
	\end{align*}
\end{thm}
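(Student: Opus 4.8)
The strategy is to apply the Agmon estimate of Theorem~\ref{t:main} with the weight functions $g_N := v^\alpha \wedge e^N$ and the Hardy weights $w_N := w$ (for all $N$), so that it suffices to verify the eikonal inequality $|\nabla g_N^{1/2}|^2 \le \gamma\, g_N\, w$ for some fixed $\gamma\in(0,1)$. The key identity is that $g^{1/2} = v^{\alpha/2}$, so morally $|\nabla g^{1/2}|^2 = |\nabla v^{\alpha/2}|^2$, which we must compare to $g w = v^\alpha \cdot \mathcal{H}(v^{1/2})/v^{1/2}$. Writing out $w$ via the ground state transform (Proposition~\ref{p:GST}) as $w = \mathcal{H}(v^{1/2})/v^{1/2}$, and using that $v$ is superharmonic so $\mathcal{H}v \ge 0$, one sees $v^\alpha w \ge v^\alpha \cdot |\nabla(v^{1/2})|^2/v = v^{\alpha-1}|\nabla v^{1/2}|^2$. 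Hence it suffices to bound $|\nabla v^{\alpha/2}|^2 \le \gamma\, v^{\alpha-1}|\nabla v^{1/2}|^2$ pointwise, with $\gamma<1$.

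First I would prove this pointwise inequality. For a single edge $x\sim y$ we need to compare $(v(x)^{\alpha/2}-v(y)^{\alpha/2})^2$ with $v(x)^{\alpha-1}(v(x)^{1/2}-v(y)^{1/2})^2$ (and, more carefully, with the weighted symmetric expression using both $v(x)$ and $v(y)$). Setting $t = v(y)/v(x)$ this reduces to an elementary one-variable inequality of the form $(1-t^{\alpha/2})^2 \le \gamma\,(1-t^{1/2})^2$ on an interval $[c^{-1},c]$ where $c = \sup_{x\sim y} v(x)/v(y) < \infty$ is finite by the bounded-oscillation hypothesis; since $\alpha<1$ the ratio $(1-t^{\alpha/2})^2/(1-t^{1/2})^2 \to \alpha^2 < 1$ as $t\to 1$ and stays bounded away from $1$ on any compact set of $t$ bounded away from $1$ as well, so a finite $\gamma=\gamma(\alpha,c)<1$ exists. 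This is where the bounded-oscillation condition is essential: without it the ratio could approach $1$. The monotone family $g_N = v^\alpha\wedge e^N$ increases to $g=v^\alpha$, and each $g_N$ still satisfies the eikonal inequality (truncation only decreases the gradient while $g_N\ge$ a constant multiple of the relevant quantity on its support), so the hypotheses of Theorem~\ref{t:main} are met.

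Then I would invoke Theorem~\ref{t:main}: in case (a) $u$ is an approximable (positive sub-)solution in $\mathcal{F}$, and in case (b) $u\in D(H)$ — noting $q=0$ so trivially $q\in V$ — and in both cases $\mathcal{H}u = f$ with $f\in C_c(X)$, so Theorem~\ref{t:main} directly gives $u\in \ell^2(X, g w\, m) = \ell^2(X, v^\alpha w\, m)$, which is exactly the assertion. (One should also check the support condition: $f\in C_c(X)$ has finite support, and by enlarging the finite set $K$ appearing implicitly one arranges $f$ supported within the support of $w$, noting $w>0$ wherever $v$ is non-constant, or simply using the $C_c(X)$-version of the conclusion of Theorem~\ref{t:main} which does not require the support condition.)

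**Main obstacle.** The only real work is the elementary inequality $(1-t^{\alpha/2})^2 \le \gamma(1-t^{1/2})^2$ with an explicit $\gamma<1$, uniformly for $t$ ranging over the compact set dictated by the oscillation bound $c$; handled correctly this requires care with the symmetrized edge-weights $v(x)v(y)$ appearing in $|\nabla_v\cdot|^2$ versus the $v(x)^{\alpha-1}$-type prefactor, but it is ultimately a routine estimate on a bounded interval. Everything else is a direct citation of Proposition~\ref{p:GST} and Theorem~\ref{t:main}.
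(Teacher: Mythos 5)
Your overall strategy coincides with the paper's: write $w=\tfrac{\mathcal{H}v}{2v}+\tfrac{|\nabla v^{1/2}|^{2}}{v}\ge \tfrac{|\nabla v^{1/2}|^{2}}{v}$ using superharmonicity, reduce the eikonal inequality to the elementary one-variable estimate $(1-t^{\alpha/2})^{2}\le\gamma(1-t^{1/2})^{2}$ on the compact range of $t=v(y)/v(x)$ supplied by the bounded-oscillation condition (this is exactly \cite[Lemma~4.2]{KPP\_Rellich}, which the paper cites), and then invoke Theorem~\ref{t:main} with $g=v^{\alpha}$. The genuine gap is in your treatment of the truncation. You claim that $g_{N}=v^{\alpha}\wedge e^{N}$ ``still satisfies the eikonal inequality'' because ``truncation only decreases the gradient while $g_{N}\ge$ a constant multiple of the relevant quantity on its support.'' This does not work as stated: truncation decreases the left-hand side $|\nabla g_{N}^{1/2}|^{2}$, but it decreases the right-hand side $\gamma g_{N}w$ as well. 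At a vertex $x$ with $v^{\alpha}(x)>e^{N}$ having a neighbour below the cut-off level, the naive estimate gives $|\nabla g_{N}^{1/2}|^{2}(x)\le\gamma v^{\alpha}(x)w(x)$, and the best you can say via bounded oscillation is $v^{\alpha}(x)\le c^{\alpha}g_{N}(x)$ with $c=\sup_{x\sim y}v(x)/v(y)$; this yields the eikonal inequality only with constant $\gamma c^{\alpha}$, which may well exceed $1$, and then Theorem~\ref{t:main} is not applicable.

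The gap is fillable, but it needs an argument you have not given. The paper's route is to truncate $v$ rather than $v^{\alpha}$: it sets $v_{N}=v\wedge N$ (still strictly positive, superharmonic, and with the same oscillation bound), takes $g_{N}=v_{N}^{\alpha}$, and pairs it with the Hardy weight $w_{N}=\mathcal{H}(v_{N}^{1/2})/v_{N}^{1/2}$ of $v_{N}$ — exploiting the fact that Theorem~\ref{t:main} allows an $N$-dependent family of Hardy weights with $\liminf_{N}w_{N}=w$. Then the elementary inequality applied to $v_{N}$ gives $|\nabla g_{N}^{1/2}|^{2}/g_{N}\le\gamma|\nabla v_{N}^{1/2}|^{2}/v_{N}\le\gamma w_{N}$ with the same $\gamma<1$. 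If you insist on keeping $w_{N}=w$ fixed, you must additionally prove the pointwise bound $|\nabla v_{N}^{1/2}|^{2}/v_{N}\le|\nabla v^{1/2}|^{2}/v$; this is true (it follows from the monotonicity of $s\mapsto(1-(t/s)^{1/2})^{2}$ in $s\ge t$, applied edgewise), but it is precisely the step your parenthetical remark glosses over. A minor further point: your worry about the symmetrized weights $v(x)v(y)$ in $|\nabla_{v}\cdot|^{2}$ is a red herring — only the plain gradients $|\nabla v^{\alpha/2}|^{2}$ and $|\nabla v^{1/2}|^{2}$ enter, and the comparison is a clean edge-by-edge substitution $t=v(y)/v(x)$.
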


\begin{proof}
	Fix $0 < \alpha < 1$. 
			Observe that for every $ N \in \N $ the function $v_N := v \wedge N $ is a strictly  positive superharmonic function. We denote the Hardy weight corresponding to $ v_N$ by $ w_{N} $, i.e.\@ $w_N := \mathcal{H} v_N^{1/2}/v_N^{1/2}$. Setting 
			$g_N := (v \, \wedge \, N)^{\alpha}$ we claim that 
			\begin{align*}
				\frac{\big| \nabla g_N^{1/2} \big|^2}{g_N} \leq \gamma \, w_N.
			\end{align*} 	
	To this end, we observe first that 
	since $v$ satisfies the bounded oscillation condition, we find some $0 < \varepsilon_0 < 1$ such that $\inf_{x \sim y} v(x)/v(y) \geq \varepsilon_0^2$. It is easy to see that this implies 
	\begin{align*}
	\inf_{x \sim y} \frac{v_N(x)}{v_N(y)} \geq \varepsilon_0^2
	\end{align*}
	as well. We now set $\gamma:= \gamma(\varepsilon_0, \alpha):= \Big( \frac{1- \varepsilon_0^{\alpha}}{1-\varepsilon_0} \Big)^2 < 1$.	
	It follows from \cite[Lemma~4.2]{KPP_Rellich} that for all $a \geq \varepsilon_0$ and all $t > 0$, we have 
	\begin{align*}
	\frac{\big| t^{\alpha} - (at)^{\alpha} \big|^2}{t^{2\alpha}} \leq \gamma \, \big( 1-a \big)^2 = \gamma\, \left( \frac{t - at}{t} \right)^2.
	\end{align*}
	For $x \in X$, we let $t := v_N^{1/2}(x)$ and  observe that $a:= (v_N(y)/ v_N(x))^{{1/2}} \geq \varepsilon_0$ for all $y \in X$ with $y \sim x$, which holds due to the bounded oscillation condition derived above for $v_N$. Applying now the above inequality we find 
	\begin{align*}
	\frac{\big| \nabla g_N^{1/2}(x) \big|^2}{g_N(x)} \leq \gamma \, \frac{\big| \nabla v_N^{1/2}(x) \big|^2}{v_N(x)} \leq \gamma \, w_N(x). 
	\end{align*} 
	Since the graph is locally finite we obtain the statement from Theorem~\ref{t:main}, after observing that $\lim_{N \to \infty} g_N = v^{\alpha}$ monotonically increasingly and \\ 
	$\lim_{N \to \infty} w_N =w$.
\end{proof}
For the case of general $ q $ one needs another harmonic function $ v' $ such that $ v'/v $ is proper and of bounded oscillation. Then, one can consider  the corresponding Hardy weight $ w=\mathcal{H}(vv')^{1/2}/(vv')^{1/2} $.\medskip

\textbf{Acknowledgments.} The authors acknowledge the funding by the German Science foundation. We are endebted to Yehuda Pinchover for numerous discussions and for him generously sharing his knowledge on the subject. Moreover, we thank David Damanik and Sylvain Gol\'enia for most valuable comments.
\bibliography{literature}

\newcommand{\etalchar}[1]{$^{#1}$}
\begin{thebibliography}{HKMW13}

\bibitem[ABS20]{ABS}
Nir Avni, Jonathan Breuer, and Barry Simon.
\newblock Periodic {J}acobi matrices on trees.
\newblock {\em Adv. Math.}, 370:107241, 42, 2020.

\bibitem[AF00]{AF00}
C.~Allard and R.~Froese.
\newblock A {M}ourre estimate for a {S}chr\"odinger operator on a binary tree.
\newblock {\em Rev. Math. Phys.}, 12(12):1655--1667, 2000.

\bibitem[Agm82]{Ag82}
Shmuel Agmon.
\newblock {\em Lectures on exponential decay of solutions of second-order
  elliptic equations: bounds on eigenfunctions of {$N$}-body {S}chr\"odinger
  operators}, volume~29 of {\em Mathematical Notes}.
\newblock Princeton University Press, Princeton, NJ; University of Tokyo Press,
  Tokyo, 1982.

\bibitem[AP17]{AP17}
Setenay Akduman and Alexander Pankov.
\newblock Exponential decay of eigenfunctions of {S}chr\"{o}dinger operators on
  infinite metric graphs.
\newblock {\em Complex Var. Elliptic Equ.}, 62(7):957--966, 2017.

\bibitem[AP18]{AP18}
Setenay Akduman and Alexander Pankov.
\newblock Exponential estimates for quantum graphs.
\newblock {\em Electron. J. Differential Equations}, pages Paper No. 162, 12,
  2018.

\bibitem[AW06]{AW}
Michael Aizenman and Simone Warzel.
\newblock The canopy graph and level statistics for random operators on trees.
\newblock {\em Math. Phys. Anal. Geom.}, 9(4):291--333 (2007), 2006.

\bibitem[AW15]{AW_book}
Michael Aizenman and Simone Warzel.
\newblock {\em Random operators}, volume 168 of {\em Graduate Studies in
  Mathematics}.
\newblock American Mathematical Society, Providence, RI, 2015.
\newblock Disorder effects on quantum spectra and dynamics.

\bibitem[BD19]{BD19}
Siegfried Beckus and Baptiste Devyver.
\newblock Generalized eigenfunctions and eigenvalues: a unifying framework for
  shnol-type theorems.
\newblock {\em to appear in Journal d'Analyse Mathematique, arXiv:1904.07176},
  2019.

\bibitem[BdMLS09]{BLS}
Anne Boutet~de Monvel, Daniel Lenz, and Peter Stollmann.
\newblock Sch'nol's theorem for strongly local forms.
\newblock {\em Israel J. Math.}, 173:189--211, 2009.

\bibitem[BG15]{BG15}
Michel Bonnefont and Sylvain Gol{\'e}nia.
\newblock Essential spectrum and {W}eyl asymptotics for discrete {L}aplacians.
\newblock {\em Ann. Fac. Sci. Toulouse Math. (6)}, 24(3):563--624, 2015.

\bibitem[BGK15]{BGK15}
Michel Bonnefont, Sylvain Gol{\'e}nia, and Matthias Keller.
\newblock Eigenvalue asymptotics for {S}chr\"odinger operators on sparse
  graphs.
\newblock {\em Ann. Inst. Fourier (Grenoble)}, 65(5):1969--1998, 2015.

\bibitem[BGK{\etalchar{+}}20]{BGKLM20}
Michel Bonnefont, Sylvain Gol\'{e}nia, Matthias Keller, Shiping Liu, and
  Florentin M\"{u}nch.
\newblock Magnetic-sparseness and {S}chr\"{o}dinger operators on graphs.
\newblock {\em Ann. Henri Poincar\'{e}}, 21(5):1489--1516, 2020.

\bibitem[BGK21]{BGK2}
Michel Bonnefont, Sylvain Gol\'enia, and Matthias Keller.
\newblock Eigenvalue asymptotics and unique continuation of eigenfunctions on
  planar graphs.
\newblock {\em {P}reprint}, 2021.

\bibitem[BP20]{BP17}
Siegfried Beckus and Yehuda Pinchover.
\newblock Shnol-type theorem for the {A}gmon ground state.
\newblock {\em J. Spectr. Theory}, 10(2):355--377, 2020.

\bibitem[Bre07]{Br07}
Jonathan Breuer.
\newblock Singular continuous spectrum for the {L}aplacian on certain sparse
  trees.
\newblock {\em Comm. Math. Phys.}, 269(3):851--857, 2007.

\bibitem[CFKS87]{CFKS}
H.~L. Cycon, R.~G. Froese, W.~Kirsch, and B.~Simon.
\newblock {\em Schr\"{o}dinger operators with application to quantum mechanics
  and global geometry}.
\newblock Texts and Monographs in Physics. Springer-Verlag, Berlin, study
  edition, 1987.

\bibitem[Dev12]{Dev12}
Baptiste Devyver.
\newblock On the finiteness of the {M}orse index for {S}chr\"{o}dinger
  operators.
\newblock {\em Manuscripta Math.}, 139(1-2):249--271, 2012.

\bibitem[DFP14]{DFP}
Baptiste Devyver, Martin Fraas, and Yehuda Pinchover.
\newblock Optimal {H}ardy weight for second-order elliptic operator: an answer
  to a problem of {A}gmon.
\newblock {\em J. Funct. Anal.}, 266(7):4422--4489, 2014.

\bibitem[DFS20]{DFS}
David Damanik, Jake Fillman, and Selim Sukhtaiev.
\newblock Localization for {A}nderson models on metric and discrete tree
  graphs.
\newblock {\em Math. Ann.}, 376(3-4):1337--1393, 2020.

\bibitem[Dod84]{Do84}
Jozef Dodziuk.
\newblock Difference equations, isoperimetric inequality and transience of
  certain random walks.
\newblock {\em Trans. Amer. Math. Soc.}, 284(2):787--794, 1984.

\bibitem[DP16]{DP16}
Baptiste Devyver and Yehuda Pinchover.
\newblock Optimal {$L^p$} {H}ardy-type inequalities.
\newblock {\em Ann. Inst. H. Poincar\'e Anal. Non Lin\'eaire}, 33(1):93--118,
  2016.

\bibitem[FHS06]{FHS}
Richard Froese, David Hasler, and Wolfgang Spitzer.
\newblock Transfer matrices, hyperbolic geometry and absolutely continuous
  spectrum for some discrete {S}chr\"{o}dinger operators on graphs.
\newblock {\em J. Funct. Anal.}, 230(1):184--221, 2006.

\bibitem[Fit00]{Fi00}
P.~J. Fitzsimmons.
\newblock Hardy's inequality for {D}irichlet forms.
\newblock {\em J. Math. Anal. Appl.}, 250(2):548--560, 2000.

\bibitem[FLW14]{FLW}
Rupert~L. Frank, Daniel Lenz, and Daniel Wingert.
\newblock Intrinsic metrics for non-local symmetric {D}irichlet forms and
  applications to spectral theory.
\newblock {\em J. Funct. Anal.}, 266(8):4765--4808, 2014.

\bibitem[FP11]{FP11}
Martin Fraas and Yehuda Pinchover.
\newblock Positive {L}iouville theorems and asymptotic behavior for
  {$p$}-{L}aplacian type elliptic equations with a {F}uchsian potential.
\newblock {\em Confluentes Math.}, 3(2):291--323, 2011.

\bibitem[FS08]{FS08}
Rupert~L. Frank and Robert Seiringer.
\newblock Non-linear ground state representations and sharp {H}ardy
  inequalities.
\newblock {\em J. Funct. Anal.}, 255(12):3407--3430, 2008.

\bibitem[GHK{\etalchar{+}}15]{GHKLW}
Agelos Georgakopoulos, Sebastian Haeseler, Matthias Keller, Daniel Lenz, and
  Rados{\l}aw~K. Wojciechowski.
\newblock Graphs of finite measure.
\newblock {\em J. Math. Pures Appl. (9)}, 103(5):1093--1131, 2015.

\bibitem[GHM12]{GHM}
Alexander Grigor'yan, Xueping Huang, and Jun Masamune.
\newblock On stochastic completeness of jump processes.
\newblock {\em Math. Z.}, 271(3-4):1211--1239, 2012.

\bibitem[Gol14]{Gol14}
Sylvain Gol{\'e}nia.
\newblock Hardy inequality and asymptotic eigenvalue distribution for discrete
  {L}aplacians.
\newblock {\em J. Funct. Anal.}, 266(5):2662--2688, 2014.

\bibitem[HK11]{HK}
Sebastian Haeseler and Matthias Keller.
\newblock Generalized solutions and spectrum for {D}irichlet forms on graphs.
\newblock In {\em Random walks, boundaries and spectra}, volume~64 of {\em
  Progr. Probab.}, pages 181--199. Birkh\"auser/Springer Basel AG, Basel, 2011.

\bibitem[HKMW13]{HKMW}
Xueping Huang, Matthias Keller, Jun Masamune, and Rados{\l}aw~K. Wojciechowski.
\newblock A note on self-adjoint extensions of the {L}aplacian on weighted
  graphs.
\newblock {\em J. Funct. Anal.}, 265(8):1556--1578, 2013.

\bibitem[HKSW18]{HKSW}
Bobo Hua, Matthias Keller, Michael Schwarz, and Marcel Wirth.
\newblock {Sobolev-Type Inequalities and Eigenvalue Growth on Graphs with
  Finite Measure}.
\newblock {\em Trans. Amer. Math. Soc.}, pages to appear, preprint,
  arXiv:1804.08353, 2018.

\bibitem[HKW13]{HKW}
Sebastian Haeseler, Matthias Keller, and Rados{\l}aw~K. Wojciechowski.
\newblock Volume growth and bounds for the essential spectrum for {D}irichlet
  forms.
\newblock {\em J. Lond. Math. Soc. (2)}, 88(3):883--898, 2013.

\bibitem[HM18]{HM18}
Evans~M. Harrell, II and Anna~V. Maltsev.
\newblock On {A}gmon metrics and exponential localization for quantum graphs.
\newblock {\em Comm. Math. Phys.}, 359(2):429--448, 2018.

\bibitem[HN09]{HN}
Yusuke Higuchi and Yuji Nomura.
\newblock Spectral structure of the {L}aplacian on a covering graph.
\newblock {\em European J. Combin.}, 30(2):570--585, 2009.

\bibitem[Kat95]{kato_book}
Tosio Kato.
\newblock {\em Perturbation theory for linear operators}.
\newblock Classics in Mathematics. Springer-Verlag, Berlin, 1995.
\newblock Reprint of the 1980 edition.

\bibitem[Kel10]{K10}
Matthias Keller.
\newblock The essential spectrum of the {L}aplacian on rapidly branching
  tessellations.
\newblock {\em Math. Ann.}, 346(1):51--66, 2010.

\bibitem[KL10]{KL2}
Matthias Keller and Daniel Lenz.
\newblock Unbounded {L}aplacians on graphs: basic spectral properties and the
  heat equation.
\newblock {\em Math. Model. Nat. Phenom.}, 5(4):198--224, 2010.

\bibitem[KL12]{KL1}
Matthias Keller and Daniel Lenz.
\newblock Dirichlet forms and stochastic completeness of graphs and subgraphs.
\newblock {\em J. Reine Angew. Math.}, 666:189--223, 2012.

\bibitem[KLR14]{KR14}
Markus Klein, Christian L{\'e}onard, and Elke Rosenberger.
\newblock Agmon-type estimates for a class of jump processes.
\newblock {\em Math. Nachr.}, 287(17-18):2021--2039, 2014.

\bibitem[KLSW17]{KLSW}
Matthias Keller, Daniel Lenz, Marcel Schmidt, and Rados\l aw~K. Wojciechowski.
\newblock Note on uniformly transient graphs.
\newblock {\em Rev. Mat. Iberoam.}, 33(3):831--860, 2017.

\bibitem[KLW]{KLW_book}
Matthias Keller, Daniel Lenz, and Rados{\l}aw~K. Wojciechowski.
\newblock {\em Graphs and Dirichlet Spaces}.

\bibitem[KLW13a]{KLWa2}
Matthias Keller, Daniel Lenz, and Simone Warzel.
\newblock On the spectral theory of trees with finite cone type.
\newblock {\em Israel J. Math.}, 194(1):107--135, 2013.

\bibitem[KLW13b]{KLW}
Matthias Keller, Daniel Lenz, and Rados{\l}aw~K. Wojciechowski.
\newblock Volume growth, spectrum and stochastic completeness of infinite
  graphs.
\newblock {\em Math. Z.}, 274(3-4):905--932, 2013.

\bibitem[KM19]{KMue}
Matthias Keller and Florentin M\"{u}nch.
\newblock A new discrete {H}opf-{R}inow theorem.
\newblock {\em Discrete Math.}, 342(9):2751--2757, 2019.

\bibitem[KMP16]{KMP}
Matthias Keller, Florentin M\"unch, and Felix Pogorzelski.
\newblock Geometry and spectrum of rapidly branching graphs.
\newblock {\em Math. Nachr.}, 289(13):1636--1647, 2016.

\bibitem[KPP17]{KPP}
Matthias Keller, Norbert Peyerimhoff, and Felix Pogorzelski.
\newblock Sectional curvature of polygonal complexes with planar substructures.
\newblock {\em Adv. Math.}, 307:1070--1107, 2017.

\bibitem[KPP18]{KePiPo2}
Matthias Keller, Yehuda Pinchover, and Felix Pogorzelski.
\newblock Optimal {H}ardy inequalities for {S}chr\"odinger operators on graphs.
\newblock {\em Comm. Math. Phys.}, 358(2):767--790, 2018.

\bibitem[KPP20a]{KePiPo1}
Matthias Keller, Yehuda Pinchover, and Felix Pogorzelski.
\newblock Criticality theory for {S}chr\"{o}dinger operators on graphs.
\newblock {\em J. Spectr. Theory}, 10(1):73--114, 2020.

\bibitem[KPP20b]{KPP_Rellich}
Matthias Keller, Yehuda Pinchover, and Felix Pogorzelski.
\newblock From {H}ardy to rellich inequalities on graphs.
\newblock {\em Proc. Lond. Math. Soc.}, page to appear, 2020.

\bibitem[KR08]{KR08}
Markus Klein and Elke Rosenberger.
\newblock Agmon-type estimates for a class of difference operators.
\newblock {\em Ann. Henri Poincar\'e}, 9(6):1177--1215, 2008.

\bibitem[KR16]{KR16}
Markus Klein and Elke Rosenberger.
\newblock Agmon estimates for the difference of exact and approximate
  {D}irichlet eigenfunctions for difference operators.
\newblock {\em Asymptot. Anal.}, 97(1-2):61--89, 2016.

\bibitem[KS14]{KS14}
Evgeny Korotyaev and Natalia Saburova.
\newblock Schr\"{o}dinger operators on periodic discrete graphs.
\newblock {\em J. Math. Anal. Appl.}, 420(1):576--611, 2014.

\bibitem[KS15]{KS15}
Evgeny Korotyaev and Natalia Saburova.
\newblock Spectral band localization for {S}chr\"{o}dinger operators on
  discrete periodic graphs.
\newblock {\em Proc. Amer. Math. Soc.}, 143(9):3951--3967, 2015.

\bibitem[LSS18]{LSS}
Daniel Lenz, Marcel Schmidt, and Peter Stollmann.
\newblock Topological poincar\'e type inequalities and lower bounds on the
  infimum of the spectrum for graphs.
\newblock {\em {P}reprint arXiv:1801.09279v1}, 2018.

\bibitem[LSS20]{LSS2}
H.~Daniel Lenz, Peter R.~M. Stollmann, and Gunter~H. Stolz.
\newblock An uncertainty principle and lower bounds for the {D}irichlet
  {L}aplacian on graphs.
\newblock {\em J. Spectr. Theory}, 10(1):115--145, 2020.

\bibitem[LT16]{LT}
Daniel Lenz and Alexander Teplyaev.
\newblock Expansion in generalized eigenfunctions for {L}aplacians on graphs
  and metric measure spaces.
\newblock {\em Trans. Amer. Math. Soc.}, 368(7):4933--4956, 2016.

\bibitem[Man19]{Man}
Marc-Adrien Mandich.
\newblock Sub-exponential decay of eigenfunctions for some discrete
  {S}chr\"{o}dinger operators.
\newblock {\em J. Spectr. Theory}, 9(1):21--77, 2019.

\bibitem[MW89]{MoharWoess89}
Bojan Mohar and Wolfgang Woess.
\newblock A survey on spectra of infinite graphs.
\newblock {\em Bull. London Math. Soc.}, 21(3):209--234, 1989.

\bibitem[Nag04]{Nag04}
Tatiana Nagnibeda.
\newblock Random walks, spectral radii, and {R}amanujan graphs.
\newblock In {\em Random walks and geometry}, pages 487--500. Walter de
  Gruyter, Berlin, 2004.

\bibitem[NW02]{NW02}
Tatiana Nagnibeda and Wolfgang Woess.
\newblock Random walks on trees with finitely many cone types.
\newblock {\em J. Theoret. Probab.}, 15(2):383--422, 2002.

\bibitem[Rob18]{Ro18}
Derek~W. Robinson.
\newblock Hardy inequalities, {R}ellich inequalities and local {D}irichlet
  forms.
\newblock {\em J. Evol. Equ.}, 18(3):1521--1541, 2018.

\bibitem[Rob20]{Ro20}
Derek~W. Robinson.
\newblock {The weighted Hardy inequality and self-adjointness of symmetric
  diffusion operators}.
\newblock {\em arXiv:2006.13403}, 2020.

\bibitem[Rob21]{Ro21}
Derek~W. Robinson.
\newblock {The weighted Hardy constant}.
\newblock {\em arXiv:2103.07848}, 2021.

\bibitem[RS75]{RSII}
Michael Reed and Barry Simon.
\newblock {\em Methods of modern mathematical physics. {II}. {F}ourier
  analysis, self-adjointness}.
\newblock Academic Press [Harcourt Brace Jovanovich, Publishers], New
  York-London, 1975.

\bibitem[Sch20]{Schm3}
Marcel Schmidt.
\newblock On the existence and uniqueness of self-adjoint realizations of
  discrete (magnetic) {S}chr\"odinger operators.
\newblock In {\em {A}nalysis and {G}eometry on {G}raphs and {M}anifolds},
  volume 461 of {\em London Math. Soc. Lecture Note Ser.}, pages 250--327.
  Cambridge Univ. Press, Cambridge, 2020.

\bibitem[Sim11]{Si11}
Barry Simon.
\newblock On the removal of finite discrete spectrum by coefficient stripping.
\newblock {\em J. Spectr. Theory}, 1(1):81--85, 2011.

\bibitem[Uch98]{Uch98}
K\^ohei Uchiyama.
\newblock Green's functions for random walks on {${\bf Z}^N$}.
\newblock {\em Proc. London Math. Soc. (3)}, 77(1):215--240, 1998.

\bibitem[\v{S}57]{Shnol}
\`E.~\`E. \v{S}nol'.
\newblock On the behavior of the eigenfunctions of {S}chr\"{o}dinger's
  equation.
\newblock {\em Mat. Sb. (N.S.) 42 (84) (1957), 273-286; erratum}, 46 (88):259,
  1957.

\bibitem[Woj08]{Woj1}
Radoslaw~Krzysztof Wojciechowski.
\newblock {\em Stochastic completeness of graphs}.
\newblock ProQuest LLC, Ann Arbor, MI, 2008.
\newblock Thesis (Ph.D.)--City University of New York.

\bibitem[Woj09]{Woj2}
Rados{\l}aw~K. Wojciechowski.
\newblock Heat kernel and essential spectrum of infinite graphs.
\newblock {\em Indiana Univ. Math. J.}, 58(3):1419--1441, 2009.

\end{thebibliography}
\bibliographystyle{alpha}

\end{document}